\newtheorem{Theorem}{Theorem}[section]
\newtheorem{Corollary}[Theorem]{Corollary}
\newtheorem{Lemma}[Theorem]{Lemma}
\newtheorem{Proposition}[Theorem]{Proposition}
\theoremstyle{definition}
\newtheorem{Remark}[Theorem]{Remark}
\newtheorem{Example}[Theorem]{Example}
\numberwithin{equation}{section}
\DeclareMathAlphabet\mathbb{U}{msb}{m}{n}
\newcommand{\mono}{\rightarrowtail}
\newcommand{\epi}{\twoheadrightarrow}
\newcommand{\ilimit}{\,\varprojlim{}\:}
\def\aa{{\mathfrak{a}}}
\def\LL{\Lambda}
\begin{document}

\title{On Bousfield problem for the class of metabelian groups}
\author{Sergei O. Ivanov}
\address{Chebyshev Laboratory, St. Petersburg State University, 14th Line, 29b,
Saint Petersburg, 199178 Russia} \email{ivanov.s.o.1986@gmail.com}

\author{Roman Mikhailov}
\address{Chebyshev Laboratory, St. Petersburg State University, 14th Line, 29b,
Saint Petersburg, 199178 Russia and St. Petersburg Department of
Steklov Mathematical Institute} \email{rmikhailov@mail.ru}

\begin{abstract} The homological properties of localizations and
completions of metabelian groups are studied. It is shown that,
for $R=\mathbb Q$ or $R=\mathbb Z/n$ and a finitely presented
metabelian group $G$, the natural map from $G$ to its
$R$-completion induces an epimorphism of homology groups
$H_2(-,R)$. This answers a problem of A.K. Bousfield for the class
of metabelian groups.
\end{abstract}

\thanks{This research is supported by the Chebyshev
Laboratory  (Department of Mathematics and Mechanics, St.
Petersburg State University)  under RF Government grant
11.G34.31.0026 and by JSC "Gazprom Neft", as well as by the RF
Presidential grant MD-381.2014.1. The first author is supported by
RFBR (grant no. 12-01-31100 mol\_a, 13-01-00902 A)} \maketitle

\section{Introduction}
The subject of investigation of this paper is the relation between
inverse limits of groups and the second homology $H_2(-,K)$ for
certain coefficients $K$. One of the results of the paper is the
following. Let $G$ be a finitely presented metabelian group,
$\{\gamma_i(G)\}_{i\geq 0}$ its lower central series, then, for
any $n>0$, there is a natural isomorphism
$$
H_2(\ilimit\ G/\gamma_i(G),\mathbb Z/n)\simeq \ilimit\
H_2(G/\gamma_i(G),\mathbb Z/n).
$$
That is, in this particular case, the inverse limit commutes with
the second homology functor.

The problem of relation between inverse limit and second homology
of groups appears in different areas of algebra and topology.
Recall two related open problems, one from \cite{Bousfield}, the
second from \cite{FKRS}. A.K. Bousfield posed the following question in~\cite{Bousfield}, Problem 4.10:\\ \\
{\bf Problem.}\ {\em (Bousfield)}  Is $E^RX\to \hat X_R$ iso when
$X$ is a finitely presented group when $R=\mathbb Q$ or
$R=\mathbb Z/n$?\\

In the above problem, $E^R$ is the $HR$-localization functor
defined in~\cite{Bousfield} and $\hat X_R$ is the $R$-completion
of the group $X$. It follows immediately from the construction of
$E^R$, that, for a finitely presented group $X$, the map $E^RX\to
\hat X_R$ is isomorphism if and only if the completion map $X\to
\hat X_R$ induces an epimorphism $H_2(X;R)\to H_2(\hat X_R, R)$.
In this paper we prove the following (see Corollary
\ref{cor_bousfield})

\vspace{.5cm}\noindent{\bf Theorem.} For a finitely presented
metabelian group $X$, the natural map $E^RX\to \hat X_R$ is an
isomorphism for
$R=\mathbb Q$ or $R=\mathbb Z/n$. \\

Observe that, the above result can not be generalized to the case
$R=\mathbb Z$ as the following simple example shows. For the Klein
bottle group $G=\langle a,b\ |\ a^{-1}bab=1\rangle$, the second
homology $H_2(\hat G,\mathbb Z)$ is isomorphic to the exterior
square of the 2-adic integers and therefore is uncountable (see
\cite{Bousfield}).\\

The second problem of a similar type is the question of comparison
between discrete and continuous homology of pro-$p$-groups. For
any pro-p-group $P$, one can look at homology of $P$ in two
different ways: as homology of a discrete group and as homology of
a topological group. There are natural comparison maps between
these homology groups (see \cite{FKRS} for detailed discussion):
$$
\phi_n: H_n^{discrete}(P,\mathbb Z/p)\to H_n^{cont}(P, \mathbb
Z/p)
$$
Analogously, there are maps between cohomology groups $\phi^n:
H_{cont}^n(P,\mathbb Z/p)\to H_{discrete}^n(P,\mathbb Z/p)$.
In~\cite{FKRS}, G.A. Fernandez-Alcober, I.V. Kazachkov, V.N. Remeslennikov, and P. Symonds asked the following question.\\ \\
{\bf Problem.}\ Does there exist a finitely presented pro-$p$
group $P$ for which $\phi^2 : H^2_{\text{cont}} (P,\mathbb Z/p)
\to H^2_{\text{discrete}}(P,\mathbb Z/p)$ is not an isomorphism?\\

See \cite{Serre} for background on the continuous cohomology of
pro-$p$ groups. It is shown in \cite{FKRS} that, for a finitely
presented pro-$p$-group $P$, the following two conditions are
equivalent:\\ \\
(1) the map $\phi^2: H^2_{\text{cont}}(P,\mathbb Z/p)\to
H^2_{\text{discrete}}(P,\mathbb Z/p)$ is an isomorphism;\\ \\
(2) the map $\phi_2: H_2^{\text{discrete}}(P,\mathbb Z/p)\to
H_2^{\text{cont}}(P,\mathbb Z/p)$ is an isomorphism.

Our contribution to this problem is the following: for a finitely
presented metabelian group $G$, there is a natural isomorphism
(see Corollary \ref{cor_limit})
$$
H_2^{\text{discrete}}(\hat G_p,\mathbb
Z/p)\buildrel{\simeq}\over\to H_2^{\text{cont}}(\hat G_p,\mathbb
Z/p)
$$

Observe that, it is not possible to extend the above results to
the class of all finitely generated metabelian groups. For
example, one can consider the $p$-Lamplighter group
$$
L_p:=\mathbb Z/p\wr \mathbb Z=\langle a,b\ |\ a^p=[a,a^{b^i}]=1,\
i\in \mathbb Z\rangle
$$
It is easy to check that the second homology of the
pro-p-completion $H_2(\hat L_p,\mathbb Z/p)$ is an uncountable
$\mathbb Z/p$-vector space. Despite the fact that all finitely
generated metabelian groups are subgroups of finitely presented
metabelian groups (see \cite{Baumslag1} and \cite{Rem}), the
finite presentability is a crucial point in the results like
Corollary \ref{cor_limit} and Corollary \ref{cor_bousfield}.

Let $G$ be a finitely generated metabelian group with a metabelian
decomposition $M\mono G \epi A$. The group $G$ is finitely
presented if and only if the $\mathbb Z[A]$-module $M$ is tame in
the sense of Bieri-Strebel \cite{Bieri-Strebel_1981}. Tame modules
are characterized via geometric properties (see section 7) and
certain group-theoretic properties of finitely generated
metabelian groups can be formulated in the language of commutative
algebra. In this paper, the properties of tame modules are used
for investigation of homological behavior of $R$-completions.

The paper is organized as follows. In section 2 we recall the
general properties of inverse limits of groups as well as the
properties of their derived functors. In section 3 we recall the
definitions and properties of $I$-adic completions of modules and
describe the structure of a {\it twisted exterior square} of a
completed and localized module. The structure of the twisted
exterior square plays a central role in the study of the second
homology of the completed and localized metabelian groups. Section
4 is about $R$-completions of finitely generated metabelian
groups, where $R$ is either a subring of rationals or a finite
ring $\mathbb Z/n$. A natural way to compare homology of a
metabelian group with homology of its $R$-completion it to
consider the standard homology spectral sequences for
corresponding metabelian decompositions. Sections 5,6 contain
technical properties of functors which appear in these spectral
sequences. Section 7 is a technical section, where the finite
presentability appears. The properties of exterior squares of tame
modules in the sense of Bieri-Strebel play a key role in the whole
picture (see Proposition \ref{prop_wedge_compl_loc}). The main
results of the paper are essentially based on the fact that one
can control the $E_{0,2}^2$-term of the standard homology spectral
sequence for a metabelian decomposition of $R$-completions.

The main results of the paper are theorems \ref{theorem_limit} and
\ref{Theorem_Bousfield}.  These theorems are proved in sections 8
and 9. Theorem \ref{theorem_limit} is formulated as follows. Let
$G$ be a finitely presented metabelian group, $R=\mathbb
Z[J^{-1}]$ or $R=\mathbb Z/n$ and $K$ be an Artinian quotient ring
of $R$. Then the $R$-lower central quotient maps induce the
isomorphisms
$$H_2(\hat G_R,K)\cong \ilimit H_2(G/\gamma^R_{i}(G),K).$$
As a simple corollary of this theorem, we get the natural
isomorphism between discrete and continuous homology groups of
pro-$p$-completions of finitely presented groups, which gives an
answer to a particular case of the problem from \cite{FKRS}
mentioned above. Theorem \ref{Theorem_Bousfield} is the following.
Assuming that $G$, $R$ and $K$ are as in theorem
\ref{theorem_limit}, for $i>\!>0$, there is a short exact sequence
$$0 \longrightarrow \Phi^R_iH_2(G,K)\longrightarrow
H_2(G,K)\longrightarrow H_2(\hat G_R,K) \longrightarrow 0,$$ where
the epimorphism is induced by the homomorphism $G\to \hat G_R.$
Here $\Phi^R_iH_2(G,K)$ is the $i$th term of an $R$-analog of the
Dwyer filtration of $H_2(G,K)$. That is, the kernel of the map of
$H_2(-,K)$ induced by $R$-completion, is described.

In the final section 10 we consider the second homology with
finite coefficients of the Telescope functor (see
\cite{Baumslag-Mikhailov-Orr} for properties and applications of
the Telescope). We prove that, for a finitely presented metabelian
group $G$, the inclusion of the Telescope of $G$ into pronilpotent
completion $\bar G\hookrightarrow \hat G$ induces a natural
isomorphism $H_2(\bar G, \mathbb Z/n)\cong H_2(\hat G, \mathbb
Z/n)$.

Recall that, for a free group $F$ of rank 2, the second homology
$H_2(\hat F,\mathbb Z)$ is uncountable \cite{Bousfield}. The proof
is based on the construction of a free simplicial resolution of
the Klein bottle group and uses the homology spectral sequence for
a bisimplicial group. Observe that, the results of this paper show
that the same type of proof can not be used for homology with
finite coefficients. This motivates the following:\\ \\ {\bf
Problem.} {\it Is it true that, for every $n\geq 1$ and a free
group $F$, $H_2(\hat F, \mathbb Z/n)=0$?}

\section{Inverse limits of abelian groups and modules.}
Denote by $\omega$ the category, whose objects are natural numbers and
$$\omega(n,m)=\left\{
\begin{array}{ll}
\{(n,m)\}, & \text{ if } n\leq m \\
\emptyset, & \text{ if } n>m.
\end{array}\right.
$$ Then an inverse sequence of abelian groups
$$A_0\longleftarrow A_1 \longleftarrow A_2 \longleftarrow \dots$$
can be considered as a functor $A:\omega^{\rm op}\to {\sf Ab}.$
Consider the functor category ${\sf Ab}^{\omega^{\rm op}}.$ Since,
the category $\omega^{\rm op}$ is a free category generated by a
graph, the global dimension of the abelian category ${\sf
Ab}^{\omega^{\rm op}}$ is equal to $1$. The functor $$\varprojlim:
{\sf Ab}^{\omega^{\rm op}}\to {\sf Ab}$$ is left exact and its
derived functors we denote by $\varprojlim^i={\bf
R}^i\varprojlim.$ Since the global dimension is equal to $1$, we
get $\varprojlim^i=0$ for $i\geq 2.$ Moreover, if the
Mittag-Leffler condition holds for the inverse sequence $\{A_i\}$,
we have ${\varprojlim}^1 A_i=0.$ For example, the Mittag-Leffler
condition holds if the homomorphisms $A_{i+1}\to A_i$ are
epimorphisms.

Let $C^\bullet_i$ be an inverse sequence of (not necessarily bounded) complexes of abelian groups:
$$C^\bullet_0\longleftarrow C^\bullet_1 \longleftarrow C^\bullet_2 \longleftarrow \dots .$$
It can be considered as a complex
 $C^\bullet$ in the abelian category
  ${\sf Ab}^{\omega^{\rm op}}.$
  Let $I^{\bullet\bullet}$ be a right Cartan-Eilenberg resolution of $C^\bullet$ in ${\sf Ab}^{\omega^{\rm op}}$.
  Since the global dimension is $1$, we can chose $I^{\bullet\bullet}$ so that $I^{\bullet i}=0$ for $i\geq 2.$ It follows that the
  totalisations are equal ${\sf Tot}^{\oplus}(I^{\bullet\bullet})={\sf Tot}^{\prod}(I^{\bullet\bullet})$ and we denote it by ${\sf Tot}(I^{\bullet\bullet}).$
  Then we have
$${\bf R}^i\varprojlim(C^\bullet)= H^i( \ilimit {\sf Tot}(I^{\bullet\bullet}))=H^i({\sf Tot}( \ilimit I^{\bullet\bullet})),$$
where ${\bf R}^i\varprojlim$ is the right hyper derived functor of
$\varprojlim$ (see \cite[5.7.9]{Weibel}).

\begin{Lemma}\label{lemma_spectral_sequences_limit}
There are two spectral sequences $E$ and $'\!E$ such that
$$E_2^{pq}=H^p(\underset{i}\varprojlim^q\  C^\bullet_i)\
\Rightarrow\ {\bf R}^{p+q}\varprojlim(C^{\bullet}),$$

$$'\!E_2^{pq}=\underset{i}\varprojlim{}^p  \ H^q(C^\bullet_i)\ \Rightarrow\  {\bf R}^{p+q}\varprojlim(C^{\bullet}),$$
and $E^{pq}_1=\varprojlim^q C^p_i.$
\end{Lemma}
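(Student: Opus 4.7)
The plan is to build $E$ and $'\!E$ as the two standard filtration spectral sequences of the double complex $J^{\bullet\bullet}:=\varprojlim I^{\bullet\bullet}$. Because $I^{\bullet i}=0$ for $i\geq 2$, $J^{\bullet\bullet}$ is concentrated in only two rows, so its total complex is a finite direct sum in each degree and both filtration spectral sequences converge strongly to $H^\bullet({\sf Tot}(J^{\bullet\bullet}))$, which by the preceding paragraph equals ${\bf R}^\bullet\varprojlim(C^\bullet)$.

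For $E$, filter by columns and compute vertical cohomology first. Since $I^{p\bullet}$ is, by the definition of a Cartan--Eilenberg resolution, an injective resolution of $C^p$ in ${\sf Ab}^{\omega^{\rm op}}$, I obtain
$$E_1^{pq}=H^q_v(J^{p\bullet})=H^q(\varprojlim I^{p\bullet})={\bf R}^q\varprojlim(C^p)=\varprojlim{}^q C^p_i,$$
which is the $E_1$-page stated. The $E_1$-differential is induced by the horizontal differential of $I^{\bullet\bullet}$, i.e., by the differential of $C^\bullet$, so $E_2^{pq}=H^p(\varprojlim^q C^\bullet_i)$.

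For $'\!E$, filter by rows and compute horizontal cohomology first. The one non-formal step is the commutation $H^p_h(\varprojlim I^{\bullet q})=\varprojlim H^p_h(I^{\bullet q})$, which is not automatic because $\varprojlim$ is only left exact. The Cartan--Eilenberg hypothesis handles this: at each fixed $q$ the horizontal cycles $Z^p_h(I^{\bullet q})$ and boundaries $B^p_h(I^{\bullet q})$ are injective, so the short exact sequences
$$0\to Z^p_h(I^{\bullet q})\to I^{pq}\to B^{p+1}_h(I^{\bullet q})\to 0,\qquad 0\to B^p_h(I^{\bullet q})\to Z^p_h(I^{\bullet q})\to H^p_h(I^{\bullet q})\to 0$$
split; being split, they are preserved by the additive functor $\varprojlim$, yielding the desired commutation. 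Since $H^p_h(I^{\bullet\bullet})$ is by construction an injective resolution of $H^p(C^\bullet)$ in the $q$-direction, applying $\varprojlim$ and then $H^q_v$ produces ${\bf R}^q\varprojlim(H^p(C^\bullet))=\varprojlim^q H^p(C^\bullet_i)$, which after the usual swap of $p$ and $q$ is the $'\!E_2$-page of the statement.

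The principal obstacle is the commutation of $\varprojlim$ with horizontal cycles, boundaries and cohomology used for $'\!E$, and the CE splitting argument above settles it. Everything else --- the existence of the two filtration spectral sequences and their strong convergence --- is formal, once one notes that $J^{\bullet\bullet}$ has only two nonzero rows.
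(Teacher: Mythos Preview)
Your proposal is correct and follows essentially the same approach as the paper: both construct $E$ and $'\!E$ as the two filtration spectral sequences of the double complex $\varprojlim I^{\bullet\bullet}$, and both use the fact that this complex has only two nonzero rows to get strong convergence. The paper simply cites \cite[5.7.9]{Weibel} for the identification of the pages, whereas you spell out explicitly the Cartan--Eilenberg splitting argument that makes $\varprojlim$ commute with horizontal cohomology in the $'\!E$ computation; this extra detail is exactly what underlies the Weibel reference, so the two proofs are the same in substance.
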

\begin{proof}
The double complex $\varprojlim I^{\bullet\bullet}$ has only two
nonzero rows, and hence the canonical filtrations of $\varprojlim
{\sf Tot}(I^{\bullet\bullet})$ are bounded. It follows that the
both sequences of a double complex $E$ and $'\!E$ converge to
${\bf R}^{p+q}\varprojlim(C^{\bullet}).$ Further, as in
{\cite[5.7.9]{Weibel}}, we get $E_1^{pq}=\varprojlim^q\:  C^p_i,$
$E_2^{pq}=H^p(\varprojlim^q\:  C^\bullet_i)$ and
$'\!E_2^{pq}=\varprojlim{}^p \: H^q(C^\bullet_i).$
\end{proof}
\begin{Remark}
The difference between the general statement in {\cite[5.7.9]{Weibel}} and Lemma \ref{lemma_spectral_sequences_limit} is that in our case all the spectral sequences converge in the strict sense. This lemma can be proved for any left exact functor from an abelian category of finite global dimension.
\end{Remark}
\begin{Corollary}\label{corollary_shoert_eact _sequence_limit}
If morphisms $C^\bullet_{i+1}\to C^\bullet_i$ satisfy the
Mittag-Leffler condition, then there is a short exact sequence
$$0\longrightarrow \underset{i}{\varprojlim}^1  \
H^{q-1}(C^\bullet_i)\longrightarrow H^q(\underset{i}\varprojlim\
C_i^\bullet) \longrightarrow  \underset{i}\varprojlim  \
H^{q}(C^\bullet_i)\longrightarrow 0.$$
\end{Corollary}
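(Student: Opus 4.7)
The plan is to combine the two spectral sequences of Lemma~\ref{lemma_spectral_sequences_limit}: one will collapse because of Mittag-Leffler, the other will be short because of global dimension one, and cross-identifying their common abutment yields the conclusion.

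First I would feed the Mittag-Leffler hypothesis into the first spectral sequence. In each degree $p$, the term-wise inverse system $\{C^p_i\}$ satisfies Mittag-Leffler, so $\varprojlim^1 C^p_i = 0$, i.e.\ the row $q=1$ of $E_1^{pq} = \varprojlim^q C^p_i$ is identically zero. Together with the vanishing $\varprojlim^q = 0$ for $q \geq 2$ coming from global dimension one, this forces $E_2^{pq} = 0$ whenever $q \geq 1$, so the first spectral sequence collapses onto the line $q=0$ and gives the identification
\[ {\bf R}^p\varprojlim(C^\bullet) \;\cong\; H^p(\varprojlim C^\bullet_i). \]

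Second I would exploit that the second spectral sequence $\,'\!E_2^{pq} = \varprojlim^p H^q(C^\bullet_i)$ is already concentrated in the two columns $p=0$ and $p=1$ (again by global dimension one). A spectral sequence supported in two adjacent columns collapses at $E_2$ and induces a two-step filtration on each abutment, which unfolds into the short exact sequence
\[ 0 \longrightarrow {\varprojlim}^1 H^{q-1}(C^\bullet_i) \longrightarrow {\bf R}^q\varprojlim(C^\bullet) \longrightarrow \varprojlim H^q(C^\bullet_i) \longrightarrow 0. \]
Substituting $H^q(\varprojlim C^\bullet_i)$ for the middle term, via the identification from the first step, produces exactly the exact sequence asserted in the corollary.

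The only point that requires any care is the strict convergence of both spectral sequences, so that the filtration on ${\bf R}^q\varprojlim$ really is a genuine two-step filtration whose subquotients sit in an honest short exact sequence, rather than only a limit of such. But this strictness is precisely what is guaranteed by Lemma~\ref{lemma_spectral_sequences_limit} and highlighted in the Remark immediately following it, so no essential obstacle arises; the proof is pure bookkeeping once the two spectral sequences are in hand.
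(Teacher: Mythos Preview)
Your argument is correct and follows precisely the route the paper intends: the corollary is stated without proof as an immediate consequence of Lemma~\ref{lemma_spectral_sequences_limit}, and your derivation---collapsing $E$ via Mittag-Leffler to identify the abutment with $H^q(\varprojlim C^\bullet_i)$, then reading off the two-column spectral sequence $'\!E$---is exactly the intended one. Nothing to add.
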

\begin{Proposition}\label{prop_tensor_tor_abelian_limit}
Let $\{A_i\}$ be an inverse sequence of abelian groups and $B$ a finitely generated abelian group. Then the natural morphisms are isomorphisms
$$\varprojlim {\sf Tor}(A_i,B)\cong {\sf Tor}(\varprojlim A_i,B), \hspace{1cm}({\varprojlim}^1  A_i)\otimes B\cong {\varprojlim}^1 (A_i\otimes B)$$
and for the natural morphisms $$\alpha: (\varprojlim A_i)\otimes
B\to \varprojlim \: (A_i\otimes
B),\hspace{1cm}\beta:{\varprojlim}^1\: {\sf Tor}(A_i,B)\to {\sf
Tor}({\varprojlim}^1 A_i,B)$$ there are natural isomorphisms
$${\rm Ker}(\alpha) \cong {\rm Ker}(\beta),\hspace{1cm} {\rm
Coker}(\alpha) \cong {\rm Coker}(\beta).$$
\end{Proposition}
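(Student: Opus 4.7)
The plan is to reduce to the case $B=\mathbb{Z}/n$ and then apply the two convergent spectral sequences from Lemma \ref{lemma_spectral_sequences_limit} to a well-chosen two-term complex.

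Since $B$ is finitely generated, it decomposes as a finite direct sum of copies of $\mathbb{Z}$ and $\mathbb{Z}/n_j$. All four functors in play ($-\otimes B$, ${\sf Tor}(-,B)$, $\varprojlim$, $\varprojlim^1$) commute with finite direct sums, and the case $B=\mathbb{Z}$ is trivial since then ${\sf Tor}(-,B)=0$ and $-\otimes B$ is the identity. So it suffices to treat $B=\mathbb{Z}/n$. For this, use the free resolution $0\to\mathbb{Z}\xrightarrow{n}\mathbb{Z}\to\mathbb{Z}/n\to 0$: form the two-term cochain complex $C^\bullet_i$ with $C^0_i=C^1_i=A_i$ in cohomological degrees $0$ and $1$ and differential multiplication by $n$. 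Its cohomology is $H^0(C^\bullet_i)={\sf Tor}(A_i,\mathbb{Z}/n)$ and $H^1(C^\bullet_i)=A_i\otimes\mathbb{Z}/n$.

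Now apply Lemma \ref{lemma_spectral_sequences_limit} to the inverse sequence $\{C^\bullet_i\}$, and write ${\bf H}^n={\bf R}^n\varprojlim(C^\bullet)$ for the common abutment. Both spectral sequences are concentrated in the square $0\le p,q\le 1$, so they collapse at $E_2$. Reading off ${\bf H}^0$ from both gives ${\sf Tor}(\varprojlim A_i,\mathbb{Z}/n)\cong \varprojlim {\sf Tor}(A_i,\mathbb{Z}/n)$, and reading off ${\bf H}^2$ from both gives $(\varprojlim^1 A_i)\otimes\mathbb{Z}/n\cong\varprojlim^1(A_i\otimes\mathbb{Z}/n)$. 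In total degree $1$ the two filtrations produce subgroups $S,T\subseteq {\bf H}^1$ and quotients $S',T'$ with
$$S=(\varprojlim A_i)\otimes\mathbb{Z}/n,\quad S'={\sf Tor}(\varprojlim{}^1 A_i,\mathbb{Z}/n),\quad T=\varprojlim{}^1{\sf Tor}(A_i,\mathbb{Z}/n),\quad T'=\varprojlim(A_i\otimes\mathbb{Z}/n).$$
One then identifies $\alpha$ with the composite $S\hookrightarrow {\bf H}^1\twoheadrightarrow T'$ and $\beta$ with $T\hookrightarrow {\bf H}^1\twoheadrightarrow S'$; from this ${\rm Ker}(\alpha)\cong S\cap T\cong {\rm Ker}(\beta)$ and ${\rm Coker}(\alpha)\cong {\bf H}^1/(S+T)\cong {\rm Coker}(\beta)$ follow immediately.

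The main obstacle is the naturality verification at the end: one must check that the maps produced from the two spectral-sequence filtrations on ${\bf H}^1$ genuinely coincide with the canonical maps $\alpha,\beta$ of the statement. This requires unwinding the construction of the two spectral sequences from a Cartan--Eilenberg resolution of $C^\bullet$ in ${\sf Ab}^{\omega^{\rm op}}$ and comparing the resulting edge maps with the natural morphisms defined via the universal properties of $\otimes$, ${\sf Tor}$, $\varprojlim$, and $\varprojlim^1$. The computation is routine but the bookkeeping is somewhat delicate.
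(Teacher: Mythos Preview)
Your argument is correct, and it is a genuine variant of the paper's proof rather than the same argument. Both proofs rest on Lemma~\ref{lemma_spectral_sequences_limit}, but they organize the computation differently. The paper does not reduce to cyclic $B$: it takes a free presentation $0\to P_1\to P_0\to B\to 0$ with $P_0,P_1$ finitely generated free, forms the four-term \emph{acyclic} complex
\[
0\to {\sf Tor}(A_i,B)\to A_i\otimes P_1\to A_i\otimes P_0\to A_i\otimes B\to 0,
\]
and applies only the single spectral sequence $E$, which then converges to zero. After using $\varprojlim^q(A_i\otimes P_j)\cong(\varprojlim^q A_i)\otimes P_j$ to replace the two middle columns by ${\sf Tor}(\varprojlim^q A_i,B)$ and $(\varprojlim^q A_i)\otimes B$, the maps $\alpha$ and $\beta$ sit on the $E_1$-page as horizontal differentials, and the four conclusions drop out of the vanishing of $E_\infty$. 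Your approach instead uses a two-term non-acyclic complex and compares the \emph{two} spectral sequences through their common abutment ${\bf H}^1$; the ${\rm Ker}$/${\rm Coker}$ statements then come from intersecting the two one-step filtrations. The trade-off: in the paper's setup $\alpha$ and $\beta$ are visibly the induced $d_1$-maps, so the naturality check you flag as the ``main obstacle'' is essentially automatic; in your setup that identification of edge maps with $\alpha,\beta$ is the one point requiring care, though as you say it is routine once the Cartan--Eilenberg model (or the explicit $\prod A_i\xrightarrow{1-\mathrm{shift}}\prod A_i$ model for ${\bf R}\varprojlim$) is unwound. Your reduction to $B=\mathbb Z/n$ is harmless but not needed; the paper's version handles all finitely generated $B$ uniformly.
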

\begin{proof}
Let $0\to P_1\to P_0 \to B\to 0$ be a free  presentation of $B,$ where $P_0,P_1$ are finitely generated free abelian groups. Consider the acyclic complex
$$C^\bullet=(\dots \to 0\to {\sf Tor}(A_i,B)\to A_i\otimes P_1 \to A_i\otimes P_0 \to A_i\otimes B \to 0 \to \dots ).$$ Then by Lemma
\ref{lemma_spectral_sequences_limit} we get a spectral sequence
$E$ that converges to zero and $E_1^{pq}={\varprojlim}^q C_i^p$.
The first page $E_1$ looks as follows
$$\begin{tikzpicture}
  \matrix (m)
 [matrix of math nodes,row sep=1cm,column sep=1cm,minimum width=2em]
  { {\varprojlim}^1 {\sf Tor}(A_i,B) &    {\varprojlim}^1 (A_i\otimes P_1) & {\varprojlim}^1 (A_i\otimes P_0) & {\varprojlim}^1 (A_i\otimes B)\\
 {\varprojlim}\: {\sf Tor}(A_i,B) &    {\varprojlim}\: (A_i\otimes P_1) & {\varprojlim}\: (A_i\otimes P_0)& {\varprojlim}\: (A_i\otimes B). \\};
  \path[-stealth]
(m-1-1.east|-m-1-2) edge  (m-1-2)
(m-2-1.east|-m-2-2) edge  (m-2-2)
(m-1-2.east|-m-1-3) edge node [above] {$f^1$}  (m-1-3)
(m-2-2.east|-m-2-3) edge node [above] {$f^0$}  (m-2-3)
(m-1-3.east|-m-1-4) edge  (m-1-4)
(m-2-3.east|-m-2-4) edge  (m-2-4)
;
\end{tikzpicture}$$
Since $P_j$ is a finitely generated abelian group and $\varprojlim^q$ is an additive functor, we get $\varprojlim^q(A_i\otimes P_j)\cong (\varprojlim^q A_i)\otimes P_j.$ It follows that
${\rm Ker}(f^q)={\sf Tor}(\varprojlim^q A_i,B)$ and ${\rm Coker}(f^q)=(\varprojlim^q A_i)\otimes B.$ We can replace the middle four terms in the spectral sequence with the kernels and cokernels of $f^q$ so that the new spectral sequence still converges to zero:
$$\begin{tikzpicture}
  \matrix (m)
 [matrix of math nodes,row sep=1cm,column sep=1cm,minimum width=2em]
  { {\varprojlim}^1 {\sf Tor}(A_i,B) &    {\sf Tor}({\varprojlim}^1 A_i,B) & ({\varprojlim}^1 A_i)\otimes B & {\varprojlim}^1 (A_i\otimes B)\\
 {\varprojlim}\: {\sf Tor}(A_i,B) &    {\sf Tor}({\varprojlim}\: A_i, B) & ({\varprojlim}\: A_i)\otimes B& {\varprojlim}\: (A_i\otimes B). \\};
  \path[-stealth]
(m-1-1.east|-m-1-2) edge node [above] {$\beta$}  (m-1-2)
(m-2-1.east|-m-2-2) edge  (m-2-2)
(m-1-2.east|-m-1-3) edge node [above] {$0$} (m-1-3)
(m-2-2.east|-m-2-3) edge node [above] {$0$}  (m-2-3)
(m-1-3.east|-m-1-4) edge  (m-1-4)
(m-2-3.east|-m-2-4) edge node [above] {$\alpha$}  (m-2-4)
;
\end{tikzpicture}$$
Analysing the second page of this spectral sequence we obtain the required isomorphisms.
\end{proof}
\begin{Corollary} Let $\{A_i\}$ be an inverse sequence of abelian groups that satisfies the Mittag-Leffler condition. Then there is the following short exact sequence
$$0\longrightarrow {\varprojlim}^1 {\sf Tor}(A_i,B)\longrightarrow (\varprojlim \:A_i)\otimes B \overset{\alpha}\longrightarrow \varprojlim\: (A_i\otimes B) \longrightarrow 0.$$
\end{Corollary}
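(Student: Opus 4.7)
The plan is to read off this corollary as an immediate consequence of Proposition \ref{prop_tensor_tor_abelian_limit} once we feed in the Mittag-Leffler hypothesis. The proposition already produces the map $\alpha:(\varprojlim A_i)\otimes B\to \varprojlim(A_i\otimes B)$ and its comparison map $\beta:\varprojlim^1 \mathrm{Tor}(A_i,B)\to \mathrm{Tor}(\varprojlim^1 A_i,B)$, together with natural isomorphisms $\mathrm{Ker}(\alpha)\cong \mathrm{Ker}(\beta)$ and $\mathrm{Coker}(\alpha)\cong \mathrm{Coker}(\beta)$. So all that remains is to identify these kernels and cokernels under the Mittag-Leffler assumption.

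First I would observe that the Mittag-Leffler condition on $\{A_i\}$ gives $\varprojlim^1 A_i=0$, as recalled in the paragraph just before Lemma \ref{lemma_spectral_sequences_limit}. Consequently the target of $\beta$ is $\mathrm{Tor}(0,B)=0$, so $\mathrm{Coker}(\beta)=0$ and $\mathrm{Ker}(\beta)=\varprojlim^1 \mathrm{Tor}(A_i,B)$.

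Transporting these identifications along the isomorphisms from Proposition \ref{prop_tensor_tor_abelian_limit} yields $\mathrm{Coker}(\alpha)=0$, i.e. $\alpha$ is surjective, and $\mathrm{Ker}(\alpha)\cong \varprojlim^1\mathrm{Tor}(A_i,B)$. Writing the standard short exact sequence
\[
0\longrightarrow \mathrm{Ker}(\alpha)\longrightarrow (\varprojlim A_i)\otimes B\overset{\alpha}\longrightarrow \varprojlim(A_i\otimes B)\longrightarrow 0
\]
and substituting the identification of $\mathrm{Ker}(\alpha)$ produces the claimed sequence.

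There is no real obstacle here: the corollary is formally a specialization of the proposition, and the only input used is the vanishing $\varprojlim^1 A_i=0$ under Mittag-Leffler. The only point to be slightly careful about is naturality of the identification $\mathrm{Ker}(\alpha)\cong \varprojlim^1\mathrm{Tor}(A_i,B)$, but this is already ensured by the construction in the proof of the proposition, since the isomorphism is produced by differentials on the $E_2$-page of a spectral sequence of functors.
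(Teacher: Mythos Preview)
Your argument is correct and matches the paper's approach exactly: the corollary is stated without proof, as an immediate specialization of Proposition~\ref{prop_tensor_tor_abelian_limit} under the Mittag-Leffler hypothesis $\varprojlim^1 A_i=0$. The only implicit assumption you should keep in mind is that $B$ is still the finitely generated abelian group from the proposition.
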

The following proposition is a generalisation of the previous corollary.

\begin{Proposition}\label{proposition_tor_lim_gen}
Let $\LL$ be an associative right Notherian ring, $M$ a finitely
generated right $\LL$-module and  $\{N_i\}$ an inverse sequence of
left $\LL$-modules that satisfies the Mittag-Leffler condition.
Then, for $m\geq 0,$ there is the following short exact sequence
$$0\longrightarrow {\varprojlim}^1 {\sf Tor}_{m+1}^{\LL}(M,N_i)\longrightarrow {\sf Tor}_m^{\LL}(M,\varprojlim \: N_i) \longrightarrow \varprojlim\: {\sf Tor}_m^{\LL}(M, N_i) \longrightarrow 0.$$
\end{Proposition}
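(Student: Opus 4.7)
The plan is to imitate the proof of Proposition~\ref{prop_tensor_tor_abelian_limit} but with a resolution of $M$ in place of the two-term resolution of $B$, and to feed everything into the two spectral sequences of Lemma~\ref{lemma_spectral_sequences_limit}.

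First, since $\Lambda$ is right Noetherian and $M$ is finitely generated, one can pick a resolution $P_\bullet\to M$ by finitely generated free right $\Lambda$-modules; the resolution need not be bounded, but that is harmless because Lemma~\ref{lemma_spectral_sequences_limit} applies to unbounded complexes. Reindexing cohomologically by $C_i^{-k}=P_k\otimes_\LL N_i$, we get an inverse sequence of complexes whose $i$th term has $H^{-m}(C_i^\bullet)=\mathrm{Tor}_m^\LL(M,N_i)$. The key structural point is that each $P_k$ is a finite direct sum of copies of $\LL$, so $P_k\otimes_\LL N_i$ is a finite direct sum of copies of $N_i$; consequently the Mittag-Leffler property passes from $\{N_i\}$ to $\{P_k\otimes_\LL N_i\}$, and $\varprojlim$ commutes with $P_k\otimes_\LL(-)$.

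Next, apply the two spectral sequences of Lemma~\ref{lemma_spectral_sequences_limit} to $C^\bullet$. In the first one, $E_1^{pq}=\varprojlim^q(P_{-p}\otimes_\LL N_i)$ vanishes for $q=1$ by Mittag-Leffler, so it collapses onto the row $q=0$ and yields
\[
E_2^{-m,0}=H^{-m}\bigl(\varprojlim(P_\bullet\otimes_\LL N_i)\bigr)=H^{-m}\bigl(P_\bullet\otimes_\LL \varprojlim N_i\bigr)=\mathrm{Tor}_m^\LL(M,\varprojlim N_i).
\]
Hence $\mathbf{R}^{-m}\varprojlim(C^\bullet)\cong \mathrm{Tor}_m^\LL(M,\varprojlim N_i)$. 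The second spectral sequence has $'\!E_2^{pq}=\varprojlim^p\mathrm{Tor}_{-q}^\LL(M,N_i)$, is concentrated in the columns $p=0,1$ (global dimension $1$ of $\mathsf{Ab}^{\omega^{\mathrm{op}}}$), and converges to the same abutment. The two-column shape forces short exact sequences
\[
0\longrightarrow {'\!E}_2^{1,-m-1}\longrightarrow \mathbf{R}^{-m}\varprojlim(C^\bullet)\longrightarrow {'\!E}_2^{0,-m}\longrightarrow 0,
\]
which is exactly the desired sequence once the $E_\infty$-terms are unwound.

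The only real technical point — the main obstacle — is ensuring that the first spectral sequence genuinely collapses, which relies on the preservation of Mittag-Leffler under finite direct sums and on the commutation of $\varprojlim$ with finitely generated free tensor factors; the Noetherian hypothesis on $\LL$ is what makes a finitely generated free resolution available in the first place. Everything else is a direct transcription of the argument used in Proposition~\ref{prop_tensor_tor_abelian_limit}, with the resolution of $M$ playing the role previously played by the two-step free resolution of $B$.
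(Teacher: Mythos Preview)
Your proof is correct and is essentially the paper's argument. The only cosmetic difference is that you invoke the two spectral sequences of Lemma~\ref{lemma_spectral_sequences_limit} directly, whereas the paper packages this step by citing Corollary~\ref{corollary_shoert_eact _sequence_limit} (which is precisely the short exact sequence coming from those two spectral sequences under Mittag--Leffler); the choice of a finitely generated \emph{free} resolution rather than a finitely generated projective one is likewise immaterial.
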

\begin{proof} Since, $\LL$ is right Notherian and $M$ is finitely generated, there exists a projective resolution $P_\bullet$ of $M$ that consists of finitely generated projective modules. Let $C^\bullet_i$ be the complex $P_\bullet\otimes_{\LL}N_i.$ The Mittag-Leffler condition for $N_i$ implies the Mittag-Leffler condition for $P_\bullet \otimes_\LL N_i$.  Then by Corollary \ref{corollary_shoert_eact _sequence_limit} we get the short exact sequence
$$ 0\longrightarrow {\varprojlim}^1 {\sf Tor}_{m+1}^\LL(M,N_i) \longrightarrow H_m(\varprojlim\: (P_\bullet \otimes_{\LL} N_i))\longrightarrow \varprojlim\: {\sf Tor}_m^\LL(M,N_i)\longrightarrow 0.$$
Since $P_\bullet$ consists of finitely generated projective
modules, we get the isomorphism $$\varprojlim\: (P_\bullet
\otimes_{\LL} N_i)\cong  P_\bullet \otimes_{\LL} (\varprojlim\:
N_i).$$ Thus the middle term is isomorphic to ${\sf
Tor}^{\LL}_m(M,\ilimit N_i)$
\end{proof}

\section{Completion and localization of rings and modules.}\label{section_rings}
First we remind main concepts concerned to $I$-adic topology \cite[VIII]{Zariski-Samuel_II}, \cite[III]{Bourbaki}.
Throughout the section all rings are assumed to be Noetherian and commutative. Let $I$ be an ideal of a Noetherian commutative  ring $\LL.$
 A (right) $\Lambda$-module we endow by the $I$-adic topology i.e. the topology such that the submodules $\{MI^n\}$ form a fundamental system of neighbourhoods of zero. In particular, the ring $\Lambda $ is endowed by the $I$-adic topology. The closure of a submodule $N\leq M$ is given by  ${\sf cl}(N)=\bigcap (N+MI^n).$ The submodule $N$ is open if and only if  $N\supseteq MI^n$ for some $n.$ We put $MI^{\infty}:=\bigcap MI^i={\sf cl}(0).$ The module $M$ is said to be nilpotent if $MI^i=0$ for $i>\!>0.$ The module $M$ is said to be residually nilpotent if $MI^{\infty}=0.$ The module is residually nilpotent if and only if it is Hausdorff in the $I$-adic topology. The ideal $I$ has the Artin-Rees property i.e. for a finitely generated module $M$ and its submodule $N$ the $I$-adic topology on the $N$ coincides with the induced topology. In particular,
\begin{equation}\label{eq_artin-rees-1}
MI^{\infty}\cdot I=MI^{\infty}.
\end{equation}
We put
$$M_{\sf rn}:=M/MI^{\infty}.$$
Then the projection $M\to M_{\sf rn}$ is the universal homomorphism from $M$ to a residually nilpotent module.

The $R$-completion of $M$ is the inverse limit $\hat M= \hat M_I=\ilimit M/MI^i$ with the natural structure of $\hat \LL=\ilimit \LL/I^i$-module and the natural $\LL$-homomorphism $\varphi_M:M\to \hat M.$ The ring $\hat \LL$ is still Noetherian, the morphism $\varphi:\LL\to \hat \LL$ is flat and for a finitely generated $\LL$-module $M$ there is an isomorphism $\hat M\cong M\otimes_{\LL} \hat \LL.$  The ideal $\hat I={\rm Ker}(\hat \LL\to \LL/I)$ is equal to  $\hat \LL\cdot\varphi(I),$ and there are isomorphisms $\LL/I^n\cong \hat\LL/\hat I^n.$

The notion of the $I$-adic completion is related to the notion of
localization by the multiplicative set $1+I.$ We put
$\LL^\ell=\LL^\ell_I=\LL[(1+I)^{-1}]$ and
$M^{\ell}=M^{\ell}_{I}=M[(1+I)^{-1}].$ It is well-known that the
morphism $\LL\to\LL^\ell$ is  flat and $M^\ell=M\otimes_{\LL}
\LL^\ell.$ Moreover, if we denote $I^\ell=I[(1+I)^{-1}],$ then
$\LL/I^n\cong \LL^\ell/(I^\ell)^n.$  Since every element of
$1+\hat I$ is invertible $\big((1+x)^{-1}=\sum_{i=0}^{\infty}
(-1)^i x^i\big),$ the morphism $\varphi:\LL\to \hat\LL$ lifts to
the morphism $\LL^{\ell}\to \hat \LL.$

 The ring $\LL$ is said to be {\it Zariski ring} with respect to the ideal $I$ if one of the following equivalent properties
 holds (see \cite[VII \S 4]{Zariski-Samuel_II}):
\begin{itemize}
\item every submodule of every  finitely generated $\LL$-module is closed;
\item every finitely generated $\LL$-module $M$ is residually nilpotent;
\item every ideal of $\LL$ is closed;
\item every element of $1+I$ is invertible.
\end{itemize}
 For any ring $\LL$ and an ideal $I\triangleleft \LL$ there are two constructions that give examples of Zariski rings: the $I$-adic completion $\hat\LL$ and the localization $\LL^\ell$ by the set $1+I$.  We are interested in both of these situations, so we work in the following general case.

Consider a ring homomorphism  $\varphi: \LL\to \tilde \LL$ that satisfies the following conditions:
\begin{flalign}\label{eq_conditions_Zar}
\begin{array}{l}
1)\ \tilde \LL \text{ is a Zariski ring with respect to the ideal } \tilde I=\varphi(I)\cdot \tilde \LL; \\
2)\ \varphi \text{ is flat i.e. } \tilde \LL \text{ is a flat }\LL\text{-module;}\\
3)\ \varphi \text{ induces the isomorphism } \LL/I^n\cong \tilde \LL/\tilde I^n \text{ for }  n\geq 0.
 \end{array}
\end{flalign}
We assume that $\tilde \LL$ is endowed by the $\tilde I$-adic
topology.  It is easy to see that  $\varphi$ is continuous,
$\varphi(\LL)$ is dense in $\tilde \LL$ and $\tilde
I^n=\varphi(I^n)\tilde \LL.$  For a  $\LL$-module $M$ we set
$\tilde M=M\tilde{\ }=M\otimes_{\LL} \tilde \LL$ and $\varphi_M=
1\otimes \varphi:M\to \tilde M.$ Then the functor
$\tilde{(-)}:{\sf Mod}(\LL)\to {\sf Mod}(\tilde \LL)$ is exact.
The sequence of isomorphisms $$\tilde M/ \tilde M\tilde I^n\cong
\tilde M  \otimes_{\tilde{\LL}} \tilde \LL/\tilde I^n\cong
M\otimes_{\LL}\tilde{\LL} \otimes_{\tilde \LL} \tilde \LL/\tilde
I^n \cong\\ M\otimes_{\LL} \tilde \LL/\tilde I^n \cong
M\otimes_\LL \LL/I^n \cong M/MI^n$$ together with $\tilde
MI^n=\tilde M\cdot \varphi(I^n)=\tilde M\cdot \tilde \LL\cdot
\varphi(I^n)=\tilde M\tilde I^n$ give isomorphisms
\begin{equation}\label{eq_coinvariants}
M/MI^n\cong \tilde M/\tilde M\tilde I^n\cong \tilde M/\tilde MI^n
\end{equation}
for any $\LL$-module $M$ and $n\geq 0.$ It follows that $\tilde M=\varphi_M(M)+\tilde M\tilde I^n.$ Since, every finitely generated $\tilde \LL$-module is residually nilpotent, we get an isomorphism
$$(M_{\sf rn})^{\sim}\cong M^{\sim}.$$ It follows that the morphism $\varphi_M:M\to \tilde M$ is the composition of morphisms $M\to M_{\sf rn}\to \tilde M.$

\begin{Lemma}\label{lemma_tor} Let $f:\Gamma \to \LL$ be a ring homomorphism, $M,N$ be $\LL$-modules, $X$ be a $\Gamma$-module and $i,j\geq 0$. Then there are the following isomorphisms.
\begin{enumerate}
\item ${\sf Tor}^{\Gamma}_i(\tilde M,X)\cong {\sf Tor}_i^{\Gamma}(M,X)^{\sim}.$
\item ${\sf Tor}^{\LL}_i(\tilde M,N)\cong {\sf Tor}^{\LL}_i(M,\tilde N)\cong  {\sf Tor}_i^{\LL}(M,N)^{\sim}.$
\item
The morphisms $M\to M_{\sf rn} \to \tilde M$ induce isomorphisms  $${\sf Tor}_i^\LL(\LL/I,{\sf Tor}^{\Gamma}_j( M,X))\cong {\sf Tor}^{\LL}_i(\LL/I,{\sf Tor}^{\Gamma}_j(M_{\sf rn},X))\cong {\sf Tor}^{\LL}_i(\LL/I,{\sf Tor}^{\Gamma}_j(\tilde M,X)) $$
\end{enumerate}
\end{Lemma}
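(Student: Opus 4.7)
The plan is to establish (1) and (2) by a standard resolution-plus-flatness argument, and then to deduce (3) from (1), (2), and the identification $\LL/I\cong\tilde\LL/\tilde I$, which is a consequence of condition~3 of~(\ref{eq_conditions_Zar}).

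For (1), I would pick a projective resolution $P_\bullet\to X$ over $\Gamma$ and compute
$$
{\sf Tor}^\Gamma_i(\tilde M,X)=H_i(\tilde M\otimes_\Gamma P_\bullet)\cong H_i\bigl((M\otimes_\Gamma P_\bullet)\otimes_\LL\tilde\LL\bigr).
$$
The rewriting uses that $\tilde\LL$ carries a compatible $\Gamma$-module structure via $\Gamma\xrightarrow{f}\LL\xrightarrow{\varphi}\tilde\LL$, which lets one move $\otimes_\LL\tilde\LL$ past $\otimes_\Gamma P_n$; flatness of $\tilde\LL$ over $\LL$ then commutes $(-)\otimes_\LL\tilde\LL$ with homology, yielding ${\sf Tor}^\Gamma_i(M,X)^\sim$. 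Part (2) proceeds identically, this time starting from a projective $\LL$-resolution $P_\bullet\to N$: one has $\tilde M\otimes_\LL P_\bullet\cong(M\otimes_\LL P_\bullet)\otimes_\LL\tilde\LL$, so the same flat base change of $H_i$ gives ${\sf Tor}^\LL_i(\tilde M,N)\cong{\sf Tor}^\LL_i(M,N)^\sim$; interchanging $M$ and $N$ (using commutativity of $\LL$) produces the companion formula ${\sf Tor}^\LL_i(M,\tilde N)\cong{\sf Tor}^\LL_i(M,N)^\sim$.

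For (3), the key observation is that every $\LL/I$-module $B$ satisfies $\tilde B\cong B$: indeed
$$
B\otimes_\LL\tilde\LL\cong B\otimes_{\LL/I}(\LL/I\otimes_\LL\tilde\LL)\cong B\otimes_{\LL/I}\tilde\LL/\tilde I\cong B,
$$
where the last step invokes $\LL/I\cong\tilde\LL/\tilde I$. Since ${\sf Tor}^\LL_i(\LL/I,-)$ is annihilated by $I$, it is canonically an $\LL/I$-module, hence fixed by $(-)^\sim$. Taking $N={\sf Tor}^\Gamma_j(M,X)$ in~(2) therefore yields ${\sf Tor}^\LL_i(\LL/I,{\sf Tor}^\Gamma_j(M,X)^\sim)\cong{\sf Tor}^\LL_i(\LL/I,{\sf Tor}^\Gamma_j(M,X))$, and~(1) rewrites the left-hand side as ${\sf Tor}^\LL_i(\LL/I,{\sf Tor}^\Gamma_j(\tilde M,X))$. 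Naturality in $M$ makes this iso realized by $M\to\tilde M$. Running the same argument with $M_{\sf rn}$ in place of $M$, using the identification $(M_{\sf rn})^\sim\cong\tilde M$ recorded just before the lemma, supplies the remaining isomorphism.

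The only delicate point is really the bookkeeping of module actions in (1): one must check that $M\otimes_\Gamma P_n$ genuinely inherits an $\LL$-module structure (because the $\Gamma$-action on $M$ factors through $\LL$) and that this $\LL$-action is what is seen by the subsequent flat base change by $\tilde\LL$. Beyond this, every step is driven by flatness of $\varphi$ and by the ring-level identification of condition~3 of~(\ref{eq_conditions_Zar}); no genuine obstacle is expected.
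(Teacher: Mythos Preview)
Your argument is correct and matches the paper's proof essentially line for line: a projective resolution of $X$ plus flatness of $\varphi$ for (1), symmetry of ${\sf Tor}$ over the commutative ring $\LL$ for (2), and then (1)+(2) together with $(\LL/I)^{\sim}\cong\LL/I$ and $(M_{\sf rn})^{\sim}\cong\tilde M$ for (3). The only cosmetic difference is that in (3) you phrase the key step as ``any $\LL/I$-module is fixed by $(-)^{\sim}$'' whereas the paper moves the tilde across ${\sf Tor}$ to $\LL/I$ via (2) and then invokes $(\LL/I)^{\sim}\cong\LL/I$ directly; these are the same observation.
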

\begin{proof}
(1) Let  $P_\bullet$ be a finitely generated  $\Gamma$-projective
resolution of $X.$ Then we have \begin{multline*} {\sf
Tor}_*^{\Gamma}(\tilde M,X)= H_*(\tilde M\otimes_{\Gamma}
P_\bullet)\cong H_*(\tilde \LL\otimes_\LL M \otimes_\Gamma
P_\bullet )\cong\\ \tilde \LL\otimes_\LL H_*( M\otimes_\Gamma
P_\bullet)\cong {\sf Tor}_*^{\Gamma}(M,X)^{\sim}.\end{multline*}

(2) It follows from the previous formula and the isomorphism ${\sf Tor}_*(M,N)\cong {\sf Tor}_*(N,M).$

(3) Using the previous isomorphisms and $(\LL/I)^{\sim}\cong
\LL/I$, we get \begin{multline*}{\sf Tor}^{\LL}_i(\LL/I,{\sf
Tor}^{\Gamma}_j(\tilde M,X))\cong {\sf Tor}^{\LL}_i(\LL/I,{\sf
Tor}^{\Gamma}_j( M,X)^{\sim})\cong\\ {\sf
Tor}^{\LL}_i((\LL/I)^{\sim},{\sf Tor}^{\Gamma}_j( M,X))\cong {\sf
Tor}^{\LL}_i(\LL/I,{\sf Tor}^{\Gamma}_j( M,X))\end{multline*} and
similarly
 $${\sf Tor}^{\LL}_i(\LL/I,{\sf Tor}^{\Gamma}_j( (M_{\sf rn})^{\sim},X))\cong  {\sf Tor}^{\LL}_i(\LL/I,{\sf Tor}^{\Gamma}_j(M_{\sf
 rn},X)).$$
 Using that $(M_{\sf rn})^{\sim}\cong \tilde  M,$ we obtain the required isomorphisms.
\end{proof}

Since $\tilde \LL$ is a Zariski ring, the annihilator ${\sf Ann}_{\tilde \LL}\: \tilde M$ is a closed ideal of $\tilde \LL$ and hence
\begin{equation}\label{eq_incl_cl_ann}
{\sf cl}({\sf Ann}_\LL\: M) \subseteq {\sf Ann}_\LL\:\tilde M.
\end{equation}
\begin{Lemma}\label{lemma_ses}If ${\sf cl}({\sf Ann}_{\LL}\: M) \supseteq I^{n}$ then $MI^n=MI^{n+1}$  and there is the short exact sequence
$$0\longrightarrow  MI^n \longrightarrow M \overset{\varphi_M}\longrightarrow \tilde M \longrightarrow 0.$$  In particular, $\tilde M\cong M/MI^n.$
\end{Lemma}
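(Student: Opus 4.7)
The plan is to prove the two assertions in turn: first the identity $MI^n = MI^{n+1}$ by unfolding the definition of closure, then the short exact sequence by combining (\ref{eq_incl_cl_ann}) with the isomorphism (\ref{eq_coinvariants}) already established above. The core observation driving the second part is that the hypothesis forces $\tilde M\tilde I^n = 0$, after which the identification $\tilde M\cong M/MI^n$ comes for free.

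For the first assertion, I would unfold the closure as ${\sf cl}({\sf Ann}_\LL\: M)=\bigcap_k({\sf Ann}_\LL\: M+I^k)$. The hypothesis gives in particular $I^n\subseteq {\sf Ann}_\LL\: M+I^{n+1}$; multiplying by $M$ on the left yields $MI^n\subseteq M\cdot{\sf Ann}_\LL\: M+MI^{n+1}=MI^{n+1}$, and the reverse inclusion is trivial.

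For the short exact sequence, I would invoke (\ref{eq_incl_cl_ann}) to conclude that $I^n\subseteq{\sf cl}({\sf Ann}_\LL\: M)\subseteq{\sf Ann}_\LL\:\tilde M$. Since $\tilde I^n=\varphi(I^n)\tilde\LL$ and the $\LL$-action on $\tilde M$ factors through $\varphi$, this forces $\tilde M\tilde I^n=0$. Then (\ref{eq_coinvariants}) provides a natural isomorphism $M/MI^n\cong \tilde M/\tilde M\tilde I^n$, which in view of $\tilde M\tilde I^n=0$ reads $M/MI^n\cong \tilde M$. Tracing $m\otimes 1$ through the canonical chain $\tilde M/\tilde M\tilde I^n\cong \tilde M\otimes_{\tilde\LL}\tilde\LL/\tilde I^n\cong M\otimes_\LL \LL/I^n\cong M/MI^n$ shows that the isomorphism is realised precisely by the map $m+MI^n\mapsto \varphi_M(m)$, so $\varphi_M$ is surjective with kernel $MI^n$, producing the desired short exact sequence and the final identification $\tilde M\cong M/MI^n$.

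The only point requiring mild care is verifying that the isomorphism (\ref{eq_coinvariants}) is realised by $\varphi_M$ itself rather than by some unrelated abstract map; the canonical nature of every link in the chain of tensor-product identifications makes this straightforward, so no genuine obstacle is expected beyond bookkeeping.
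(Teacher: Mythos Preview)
Your proof is correct and follows essentially the same approach as the paper: the first assertion by unfolding the closure and multiplying by $M$, the second by using \eqref{eq_incl_cl_ann} to obtain $\tilde M\tilde I^n=0$ and then invoking \eqref{eq_coinvariants}. The paper's proof is slightly terser on the naturality point (it just asserts that the composition of $\varphi_M=1\otimes\varphi$ with the isomorphism is the canonical projection), but your more careful bookkeeping reaches the same conclusion by the same route.
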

\begin{proof}
The inclusion ${\sf cl}({\sf Ann}_{\LL}\: M)=\bigcap ({\sf
Ann}_\LL\: M + I^i) \supseteq I^{n}$  implies ${\sf
Ann}_\LL\:M+I^n={\sf Ann}_\LL\:M+I^{n+1}$ and hence $$MI^n=M({\sf
Ann}_\LL\:M+I^n)=M({\sf Ann}_\LL\:M+I^{n+1})=MI^{n+1}.$$ Using
\eqref{eq_incl_cl_ann}, we get the inclusion $\tilde I^n\subseteq
{\sf Ann}_{\tilde \LL} \: \tilde M.$ Therefore, we obtain  $\tilde
M=\tilde M/\tilde M\tilde I^n\cong M/MI^n.$ It is easy to see that
the composition of $1\otimes \varphi$ with this isomorphism is the
canonical projection.
\end{proof}

\begin{Proposition}\label{prop_tensor} Let $\varphi:\LL\to \tilde \LL $ be a ring homomorphism satisfying \eqref{eq_conditions_Zar}, and  $M$ and $N$ be $\Lambda$-modules such that  ${\sf cl}({\sf Ann}_\Lambda   M+{\sf Ann}_\Lambda   N)\supseteq I^n$. Then  $(M\otimes_\LL N)I^n=(M\otimes_\LL N)I^{n+1}$ and the obvious morphisms induce isomorphisms
$$\tilde M\otimes_{ \Lambda  } \tilde N\cong \tilde M\otimes_{\tilde \Lambda  } \tilde N\cong
(M\otimes_\Lambda   N)\tilde{\ }\cong (M\otimes_\Lambda   N)/(M\otimes_\Lambda   N)I^n \cong (M/MI^n) \otimes_{\Lambda  } (N/NI^n).$$
\end{Proposition}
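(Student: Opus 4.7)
My plan is to derive all five isomorphisms by applying Lemma~\ref{lemma_ses} to the $\LL$-module $P := M \otimes_\LL N$. The key observation is that if $a \in {\sf Ann}_\LL M$ and $b \in {\sf Ann}_\LL N$, then $a+b$ annihilates $M \otimes_\LL N$, so setting $J := {\sf Ann}_\LL M + {\sf Ann}_\LL N$ I get $J \subseteq {\sf Ann}_\LL P$ and hence ${\sf cl}({\sf Ann}_\LL P) \supseteq {\sf cl}(J) \supseteq I^n$. Lemma~\ref{lemma_ses} applied to $P$ then yields at once the equality $(M \otimes_\LL N) I^n = (M \otimes_\LL N) I^{n+1}$ and the isomorphism $(M \otimes_\LL N)^{\sim} \cong (M \otimes_\LL N)/(M \otimes_\LL N) I^n$.

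The remaining identifications split into a formal part and a substantive one. Formally, I rewrite $(M \otimes_\LL N)/(M \otimes_\LL N) I^n \cong (M \otimes_\LL N) \otimes_\LL \LL/I^n$, and then use associativity of tensor together with the fact that $I^n$ acts trivially on both factors to reach $(M/MI^n) \otimes_{\LL/I^n}(N/NI^n) \cong (M/MI^n) \otimes_\LL (N/NI^n)$. Similarly, $(M \otimes_\LL N)^{\sim} = (M \otimes_\LL N) \otimes_\LL \tilde\LL$ rewrites via associativity as $M \otimes_\LL \tilde N$, and the standard base change identity $M \otimes_\LL \tilde N = M \otimes_\LL (\tilde\LL \otimes_{\tilde\LL} \tilde N) \cong (M \otimes_\LL \tilde\LL) \otimes_{\tilde\LL} \tilde N = \tilde M \otimes_{\tilde\LL} \tilde N$ delivers the third term.

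The main obstacle will be the isomorphism $\tilde M \otimes_\LL \tilde N \cong \tilde M \otimes_{\tilde\LL} \tilde N$, which fails in general for flat extensions and forces one to use the annihilator hypothesis on $M$ and $N$ in a nontrivial way. My strategy is first to upgrade the hypothesis to $\tilde I^n \subseteq \varphi(J)\tilde\LL$: since by assumption $I^n \subseteq J + I^k$ for every $k$, I get $\tilde I^n \subseteq \varphi(J)\tilde\LL + \tilde I^k$ for every $k$, which places $\tilde I^n$ in the $\tilde I$-adic closure of the ideal $\varphi(J)\tilde\LL$, and this ideal is closed because $\tilde\LL$ is a Zariski ring. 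The canonical surjection $\tilde M \otimes_\LL \tilde N \twoheadrightarrow \tilde M \otimes_{\tilde\LL} \tilde N$ has kernel generated by the elements $\tilde m \tilde\lambda \otimes \tilde n - \tilde m \otimes \tilde\lambda \tilde n$ with $\tilde\lambda \in \tilde\LL$. Using $\tilde\LL = \varphi(\LL) + \tilde I^n$ from \eqref{eq_coinvariants}, I decompose $\tilde\lambda = \varphi(\lambda) + \eta$ with $\lambda \in \LL$ and $\eta \in \tilde I^n$; the $\varphi(\lambda)$-contribution vanishes by $\LL$-bilinearity. Writing $\eta = \sum_k \varphi(a_k + b_k)\tilde\lambda_k$ with $a_k \in {\sf Ann}_\LL M$ and $b_k \in {\sf Ann}_\LL N$, the annihilation $a_k \tilde M = 0$ gives $\tilde m \eta = \sum_k \tilde m \tilde\lambda_k b_k$, so $\tilde m \eta \otimes \tilde n = \sum_k (\tilde m \tilde\lambda_k) \otimes b_k \tilde n = 0$ because $b_k \tilde N = 0$. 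A symmetric computation kills $\tilde m \otimes \eta \tilde n$, so the kernel is trivial and the isomorphism follows, completing the chain.
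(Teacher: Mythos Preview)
Your proof is correct. The overall structure matches the paper's closely: both arguments obtain the identifications $(M\otimes_\LL N)\tilde{\ }\cong (M\otimes_\LL N)/(M\otimes_\LL N)I^n$ and the equality $(M\otimes_\LL N)I^n=(M\otimes_\LL N)I^{n+1}$ from Lemma~\ref{lemma_ses} via the inclusion ${\sf Ann}_\LL M+{\sf Ann}_\LL N\subseteq {\sf Ann}_\LL(M\otimes_\LL N)$, and both treat $\tilde M\otimes_\LL\tilde N\cong \tilde M\otimes_{\tilde\LL}\tilde N$ by writing $\tilde\lambda=\varphi(\lambda)+\eta$ with $\eta\in\tilde I^n$ using $\tilde\LL=\varphi(\LL)+\tilde I^n$.

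The genuine difference lies in how you kill the $\eta$-contributions. The paper first shows $\tilde I^n\subseteq {\sf Ann}_{\tilde\LL}(\tilde M\otimes_\LL \tilde N)$ for the one-sided $\tilde\LL$-structure and then, invoking the Noetherian hypothesis on $\LL$ to write $I^n=(\lambda_1,\dots,\lambda_s)$, expresses $\eta=\sum a_i\varphi(\lambda_i)$ and moves each $\varphi(\lambda_i)$ across $\otimes_\LL$. You instead prove the ideal inclusion $\tilde I^n\subseteq \varphi({\sf Ann}_\LL M+{\sf Ann}_\LL N)\tilde\LL$ directly from the Zariski property of $\tilde\LL$ (every ideal is closed), and then split each generator as $\varphi(a_k)+\varphi(b_k)$ to annihilate the two factors separately. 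Your route is a bit more symmetric in $M$ and $N$, avoids the auxiliary one-sided $\tilde\LL$-module structure on $\tilde M\otimes_\LL\tilde N$, and does not explicitly invoke finite generation of $I^n$; the paper's route makes the role of the Noetherian hypothesis visible. Both are valid under the standing assumptions of Section~\ref{section_rings}.
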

\begin{proof} Endow $\tilde M\otimes_\Lambda   \tilde N$ by the structure of a $\tilde{\Lambda  }$-module as follows: $(m\otimes n)a=m\otimes (na)$ for $a\in \tilde \Lambda  ,$ $m\in \tilde M,$ $n\in \tilde N.$ Then the induced action of $\Lambda  $ on $\tilde M\otimes_\Lambda   \tilde N$ coincides with the standard action. Hence, ${\sf Ann}_\Lambda   (\tilde M\otimes_\Lambda   \tilde N)=\varphi^{-1}({\sf Ann}_{\tilde \Lambda  } (\tilde M\otimes_\Lambda   \tilde N)).$ Since all ideals in  $\tilde \Lambda  $ are closed and $\varphi:\Lambda  \to \tilde \Lambda  $
is continuous, ${\sf Ann}_\Lambda  (\tilde M\otimes_\Lambda   \tilde N)$ is a closed ideal. Thus, ${\sf Ann}_\Lambda  (\tilde M\otimes_\Lambda   \tilde N)\supseteq I^n$ and ${\sf Ann}_{\tilde \Lambda  }(\tilde M\otimes_\Lambda   \tilde N)\supseteq \tilde{I^n}.$

Now we prove that for any  $b\in \tilde{I^n}$ and  $m\in \tilde M,n\in \tilde N$ the elements $ m\otimes nb$ and $mb\otimes  n$ are equal to zero in $\tilde M\otimes_\Lambda   \tilde N$. The first equality is obvious because
$b \in {\sf Ann}_{\tilde \Lambda  }(\tilde M\otimes_\Lambda   \tilde N)$
and $ m\otimes nb= (m\otimes n)b=0.$
The ring $\Lambda  $ is Noetherian and hence the ideal
$I^n$ is finitely generated  $I^n=(\lambda_1, \dots, \lambda_s)$. Then  $\tilde{I^n}=(\varphi(\lambda_1),\dots,\varphi(\lambda_s)).$
Consider  $b=\sum_{i=1}^s a_i\varphi(\lambda_i) \in \tilde{I^n},$ where $a_i\in \tilde{\Lambda  }.$ Since $\varphi(\lambda_i)$ annihilates $\tilde M\otimes_\Lambda   \tilde N$, we get  $$mb\otimes n=\sum_{i=1}^s m a_i\varphi(\lambda_i)\otimes n=\sum_{i=1}^s (m a_i\otimes n)\varphi(\lambda_i)=0.$$ Therefore, we have that the image of $(\tilde M\otimes_\Lambda   \tilde N\tilde I^n )\oplus (\tilde M\tilde I^n\otimes_\Lambda   \tilde N)$ in $\tilde M \otimes_\Lambda   \tilde N$ vanishes. It follows that  $\tilde M\otimes_\Lambda   \tilde N\cong (\tilde M/\tilde M\tilde I^n)\otimes_\Lambda   (\tilde N/\tilde N\tilde I^n).$ Using \eqref{eq_coinvariants}, we get the isomorphism $\tilde M\otimes_\Lambda   \tilde N\cong ( M/ M{I^n})\otimes_\Lambda   ( N/ N{I^n}).$

The isomorphisms $$\tilde M\otimes_{\tilde \Lambda  } \tilde
N\cong (M\otimes_\Lambda   N)\tilde{\ }\cong (M\otimes_\Lambda
N)/(M\otimes_\Lambda   N)I^n$$ and the equality $(M\otimes_\LL
N)I^n=(M\otimes_\LL N)I^{n+1}$ follow from lemma \ref{lemma_ses}
and the inclusion $${\sf cl}({\sf Ann}_\LL (M\otimes_\LL
N))\supseteq {\sf cl}({\sf Ann}_\LL \: M+ {\sf Ann}_\LL\:
N)\supseteq I^n.$$ Then we only need to prove the isomorphism
$\tilde M\otimes_\Lambda   \tilde N\cong \tilde
M\otimes_{\tilde{\Lambda  }}\tilde N.$ It is sufficient to prove
that for any  $a \in \tilde{\Lambda  }$ and $m\in \tilde M, n\in
\tilde N$ the equality $ma \otimes n= m\otimes na$ holds in
$\tilde M\otimes_\Lambda   \tilde N.$ Since  $\tilde \Lambda
=\varphi(\Lambda  )+\tilde{I}^n,$ $a$ can be presented as follows
$a=\varphi(\lambda)+b,$ where $\lambda\in \Lambda  $ and $b\in
\tilde{I}^n.$ Then the equality $ma \otimes n= m\otimes na$
follows from the equalities $mb\otimes n=0,$ $m\otimes nb=0$ and
$m\lambda\otimes n=m\otimes n\lambda.$
\end{proof}

Let $\sigma:\Lambda  \to \Lambda  $ be an automorphism such that
$\sigma(I)=I$ and $\sigma^2={\sf id}.$ In particular, $\sigma$ is
continuous in the $I$-adic topology.   For a $\Lambda  $-module
$M$ we denote by $M_\sigma$ the $\Lambda  $-module with the same
underlying abelian group and the following action
$m*\lambda=m\sigma(\lambda).$ Define the {\it twisted exterior
square} $\wedge^2_\sigma M$ as the quotient $\Lambda $-module
$$\wedge^2_{\sigma} M=\frac{M\otimes_\Lambda   M_\sigma}{\langle \{ m\otimes m \mid m\in M\}\rangle_\Lambda  }, $$
where $\langle X \rangle_\Lambda  $ means the $\Lambda$-submodule
generated by $X.$

\begin{Corollary}\label{cor_wedge1}
Let $M$ be a $\Lambda$-module such that ${\sf cl}({\sf Ann}_\Lambda  M+\sigma({\sf Ann}_\Lambda  M))\supseteq I^n.$ Then  $(\wedge^2_\sigma M)I^n=(\wedge^2_\sigma M)I^{n+1}$ and the obvious morphisms induce isomorphisms
 $$\wedge^2_\sigma \tilde{M}\cong(\wedge^2_\sigma M)\tilde{\ }\cong (\wedge^2_\sigma M)/(\wedge^2_\sigma M)I^n \cong  \wedge^2_\sigma (M/MI^n).$$
\end{Corollary}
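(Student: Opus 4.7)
The plan is to deduce this corollary from Proposition \ref{prop_tensor} applied with $N=M_\sigma$. Since $\sigma$ is an involution and $\sigma(I)=I$, one has ${\sf Ann}_\Lambda\: M_\sigma = \sigma({\sf Ann}_\Lambda\: M)$, so the hypothesis of the corollary coincides with that of Proposition \ref{prop_tensor} for the pair $(M,M_\sigma)$. The proposition then supplies the equality $(M\otimes_\Lambda M_\sigma)I^n=(M\otimes_\Lambda M_\sigma)I^{n+1}$ together with the chain of natural isomorphisms
$$\tilde M\otimes_\Lambda \widetilde{M_\sigma}\ \cong\ (M\otimes_\Lambda M_\sigma)\tilde{\ }\ \cong\ (M\otimes_\Lambda M_\sigma)/(M\otimes_\Lambda M_\sigma)I^n\ \cong\ (M/MI^n)\otimes_\Lambda (M_\sigma/M_\sigma I^n).$$

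Next, I would pass to the quotients that define $\wedge^2_\sigma$. Each of the four terms above carries a natural surjection onto, respectively, $\wedge^2_\sigma \tilde M$, $(\wedge^2_\sigma M)\tilde{\ }$, $(\wedge^2_\sigma M)/(\wedge^2_\sigma M)I^n$, and $\wedge^2_\sigma(M/MI^n)$, obtained by killing the submodule generated by the diagonal elements $x\otimes x$. The proof thus reduces to checking that the four isomorphisms above match up these diagonal submodules, so that the induced maps on the quotients are well defined and bijective.

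The main obstacle is to establish this identification after passing to $\tilde M$, namely that the submodule $\langle \tilde m\otimes \tilde m\rangle_\Lambda\subseteq \tilde M\otimes_\Lambda \widetilde{M_\sigma}$ equals the image of $\langle m\otimes m\rangle_\Lambda\subseteq M\otimes_\Lambda M_\sigma$. For this, I would combine the density identity $\tilde M=\varphi_M(M)+\tilde M\tilde I^n$ established in Section \ref{section_rings} with the key fact, proved inside Proposition \ref{prop_tensor}, that $\tilde I^n$ annihilates $\tilde M\otimes_\Lambda \widetilde{M_\sigma}$. Writing $\tilde m=\varphi_M(m)+y$ with $y\in \tilde M\tilde I^n$ and expanding $\tilde m\otimes \tilde m$ bilinearly, every summand containing a factor of $y$ is annihilated, leaving $\tilde m\otimes \tilde m=\varphi_M(m)\otimes \varphi_M(m)$, which sits in the image of the diagonal submodule of $M\otimes_\Lambda M_\sigma$. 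The analogous identifications for $(\wedge^2_\sigma M)\tilde{\ }$ and for the reductions modulo $I^n$ are immediate from flatness of $\tilde\Lambda$ over $\Lambda$ and from right-exactness of $(-)\otimes_\Lambda \Lambda/I^n$, respectively.

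Finally, the equality $(\wedge^2_\sigma M)I^n=(\wedge^2_\sigma M)I^{n+1}$ follows at once from the corresponding equality for $M\otimes_\Lambda M_\sigma$ already supplied by Proposition \ref{prop_tensor}, since $\wedge^2_\sigma M$ is a $\Lambda$-module quotient of $M\otimes_\Lambda M_\sigma$.
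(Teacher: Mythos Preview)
Your proposal is correct and follows essentially the same route as the paper's own proof: apply Proposition~\ref{prop_tensor} with $N=M_\sigma$, then show that the resulting isomorphisms of tensor products carry the diagonal submodules $D(-)=\langle x\otimes x\rangle_\Lambda$ to one another, the only nontrivial case being $D(\tilde M)$, which is handled via the decomposition $\tilde M=\varphi_M(M)+\tilde M\tilde I^n$ and the vanishing of tensors with a factor in $\tilde M\tilde I^n$. The paper organizes the check slightly differently (it first compares $D(\tilde M)$ with $D(M/MI^n)$ by an epi--mono argument, then $D(M)\tilde{\ }$ with $D(\tilde M)$ by the density argument), but the substance is the same.
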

\begin{proof}  For an $\Lambda  $-module $N$ we set $$D(N):=\langle\{ n\otimes n \mid n\in N\} \rangle_\Lambda   \leq N\otimes_\Lambda
N_\sigma.$$ It is sufficient to prove the isomorphisms $$ \tilde
M\otimes_\Lambda   \tilde M_\sigma\cong (M\otimes_\Lambda
M_\sigma)\tilde{\ }\cong (M/MI^n)\otimes_\Lambda (M/MI^n)_\sigma$$
induce  isomorphisms $ D(\tilde M)\cong D(M)\tilde{\ }\cong
D(M/MI^n).$ First we prove that the isomorphism $$\tilde
M\otimes_\Lambda   \tilde  M_\sigma\cong (M/MI^n)\otimes_\Lambda
(M/MI^n)_\sigma$$ induces the isomorphism $D(\tilde M)\cong D(
M/MI^n).$ It is easy to see that the functor $D$ takes
epimorphisms to epimorphisms. Hence the epimorphism $\tilde M\epi
M/MI^n$ induces the epimorphism $D(\tilde M)\to D(M/MI^n).$ From
the other hand $D(\tilde M)\to D(M/MI^n)$ is a monomorphism
because $$\tilde M\otimes_\Lambda   \tilde M_\sigma\to
(M/MI^n)\otimes_\Lambda   (M/MI^n)_\sigma$$  is an isomorphism.

Then we only need to prove that the isomorphism $(M\otimes_\Lambda
M_\sigma)\tilde{\ }\cong \tilde M\otimes_\Lambda  \tilde M_\sigma$
induces the isomorphism $D(M)\tilde{\ }\cong D(\tilde M).$ The
image of $D(M)\tilde{\ }$ in $\tilde M\otimes_\Lambda   \tilde
M_\sigma$ is generated by $\varphi_M(m)\otimes \varphi_M(m)$ for
$m\in M$ and hence $D(\tilde M)$ includes the image. We need to
prove that for any $x\in \tilde M$ the element $x\otimes x$ lies
in the image of $D(M)\tilde{ \ }.$ Since  $\tilde
M=\varphi_M(M)+\tilde M\tilde I^n,$ we get $x=\varphi_M(m)+y,$ for
some $m\in M ,y\in \tilde M\tilde I^n.$ Isomorphisms $$\tilde
M\otimes_\Lambda   \tilde M_\sigma\cong (M/MI^n)\otimes_\Lambda
(M/MI^n)_\sigma\cong (\tilde M/\tilde M\tilde I^n)\otimes_\Lambda
(\tilde M/\tilde M\tilde I^n)_\sigma$$ imply that the elements
$y\otimes\varphi_M(m),$ $\varphi_M(m)\otimes y,$ $y\otimes y$
vanish in $\tilde M\otimes_\Lambda   \tilde M_\sigma$ and hence
$x\otimes x = \varphi_M(m)\otimes \varphi_M(m)$ lies in the image
of $D(M)\tilde{\ }.$
\end{proof}

\begin{Corollary}\label{cor_wedge2}
Let $\LL$ be a commutative Noetherian ring, $I$ be an ideal of $\LL$ and $M$ be a finitely generated $\LL$-module such that ${\sf cl}({\sf Ann}_\Lambda  M+\sigma({\sf Ann}_\Lambda  M))\supseteq I^n.$ Then there are isomorphisms
 $$\wedge^2_\sigma\: \hat{M}\cong \wedge^2_\sigma\: M^\ell \cong (\wedge^2_\sigma M)/(\wedge^2_\sigma M)I^n \cong \wedge^2_\sigma (M/MI^n),$$
 where $\hat M$ is the $I$-adic completion and $M^\ell$ is the localization $M^\ell=M[(1+I)^{-1}].$
\end{Corollary}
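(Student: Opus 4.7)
The plan is to deduce this statement as a direct application of Corollary \ref{cor_wedge1} to the two canonical examples of ring homomorphisms $\varphi:\LL\to\tilde\LL$ satisfying the standing conditions \eqref{eq_conditions_Zar}, namely the $I$-adic completion $\LL\to\hat\LL$ and the localization $\LL\to\LL^\ell$ at the multiplicative set $1+I$. That both of these satisfy \eqref{eq_conditions_Zar} is exactly the content of the paragraphs preceding \eqref{eq_conditions_Zar}: both target rings are Zariski with respect to the induced ideal, both maps are flat, and in both cases one has the isomorphism $\LL/I^n\cong\tilde\LL/\tilde I^n$.

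First, I would observe that since $M$ is finitely generated, the general base-change $\tilde M = M\otimes_\LL \tilde\LL$ used throughout the section agrees, in the completion case, with the $I$-adic completion $\hat M$ (this is the standard identification $\hat M\cong M\otimes_\LL\hat\LL$ valid for finitely generated modules, already recalled in the section); in the localization case it agrees with $M^\ell = M\otimes_\LL \LL^\ell$ by definition. Thus in each case the functorial object $\tilde M$ of Corollary \ref{cor_wedge1} is precisely the module appearing in the statement.

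Next, I apply Corollary \ref{cor_wedge1} twice. The hypothesis ${\sf cl}({\sf Ann}_\LL M + \sigma({\sf Ann}_\LL M))\supseteq I^n$ is phrased purely in terms of $\LL$, $I$, $M$, and $\sigma$, so it is available in both applications without modification. With $\tilde\LL = \hat\LL$ one obtains
$$\wedge^2_\sigma \hat M\cong (\wedge^2_\sigma M)\tilde{\ }\cong (\wedge^2_\sigma M)/(\wedge^2_\sigma M)I^n\cong \wedge^2_\sigma(M/MI^n),$$
and with $\tilde\LL = \LL^\ell$ one obtains the analogous chain
$$\wedge^2_\sigma M^\ell\cong (\wedge^2_\sigma M)/(\wedge^2_\sigma M)I^n\cong \wedge^2_\sigma(M/MI^n).$$
Concatenating these two chains through the common middle term $(\wedge^2_\sigma M)/(\wedge^2_\sigma M)I^n$ gives exactly the isomorphisms asserted in the corollary.

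There is no genuine obstacle here: the substantive work has been done in Corollary \ref{cor_wedge1}, and the present statement is just the specialisation of that general result to the two canonical Zariski examples. The only thing to keep in mind is that finite generation of $M$ is essential, as it is what allows the identification $\hat M\cong M\otimes_\LL\hat\LL$; without it the completion would not coincide with the base-change $\tilde M$ used in Corollary \ref{cor_wedge1}.
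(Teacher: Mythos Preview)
Your proposal is correct and follows exactly the paper's own argument: apply Corollary~\ref{cor_wedge1} to the two canonical instances $\tilde\LL=\hat\LL$ and $\tilde\LL=\LL^\ell$ of \eqref{eq_conditions_Zar}, using the finite-generation hypothesis to identify $\tilde M$ with $\hat M$ (respectively $M^\ell$). The paper's proof is just a one-sentence compression of what you wrote.
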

\begin{proof}
It follows from corollary \ref{cor_wedge1}, the isomorphisms $\hat M\cong M\otimes_\LL \hat\LL$,
$M^\ell\cong M\otimes_\LL \LL^\ell$ and the fact that the morphisms $\LL\to \hat \LL$ and $\LL\to \LL^{\ell}$ satisfy \eqref{eq_conditions_Zar}.
\end{proof}

\section{$R$-completion of a metabelian group.}

Let $G$ be a  metabelian group and \begin{equation}\label{eq_MGA}
 0\longrightarrow M\longrightarrow G\overset{\pi}\longrightarrow A \longrightarrow 1
\end{equation} is a short exact sequence of groups, where $A$ is an abelian group and $M$ is a right $A$-module with the action defined
by conjugation. We assume that $M=\ker(\pi)\subseteq G.$ We use
the multiplicative notation for $A$ and $G$ but for $M$ we use
both the multiplicative and the additive notations. We use $*$ for
the action of $\mathbb Z[G]$ and $\mathbb Z[A]$ on $M$  in order
to separate it from the multiplication in the group. Therefore,
for $m,m_1,m_2\in M,$ $g,g_1,\dots, g_l\in G$ and $k_1,\dots,
k_l\in \mathbb Z$ we have $(m^{k_1})^{g_1}\dots
(m^{k_l})^{g_l}=m*(\sum_{i=1}^lk_ig_i)=m*\pi(\sum_{i=1}^lk_ig_i)$
and $m_1^gm_2^g=(m_1+m_2)*g=(m_1+m_2)*\pi(g).$

There is a notion of $R$-completion of a group for subrings of
$\mathbb Q$ and for $R=\mathbb Z/n$ (see \cite{Bousfield-Kan}).
All subrings of $\mathbb Q$ have the form $R=\mathbb Z[J^{-1}],$
where $J$ is a set of prime numbers. We are going to describe the
$R$-completion $\hat G_R$ of the metabelian group $G$ in terms of
$A$ and $M$ in these two cases separately. For the case $R=\mathbb
Z[J^{-1}]$ we need some information about Malcev $R$-completion.

\subsection{Malcev $R$-completion.}

In this subsection by $R$ we denote the ring $\mathbb Z[J^{-1}]\subseteq \mathbb Q.$  Recall the notion of Malcev $R$-completion \cite{Hilton}, \cite{Hilton-Mislin-Roitberg}, \cite{Quillen_rational_homotopy_theory}.  A group $G$ is said to be {\it $J$-local} or {\it uniquely $J$-divisible} if the map $g\mapsto g^p$ is a bijection for $p\in J.$ The embedding of the category of all $J$-local nilpotent groups to the category of all nilpotent groups ${\sf Nil}_J \hookrightarrow {\sf Nil}$ has the left adjoint functor  called Malcev $R$-completion
$$-\otimes R: {\sf Nil}\to {\sf Nil}_J.$$
 Thus, if $H$ is a nilpotent group and  $H'$ is $J$-local nilpotent group, there is a natural isomorphism ${\sf Hom}(H,H')\cong {\sf Hom}(H\otimes R, H'),$
 and the unit of the adjunction $\eta_H:H\to H\otimes R$ is the universal homomorphism from $H$ to a $J$-local nilpotent group.
If $H$ is abelian, then $H\otimes  R$ is the ordinary tensor product. The functor $-\otimes  R$ preserves short exact sequences.

Let $K$ denote an Artinian quotient ring of $R.$ In other words,
$$K=\left\{\begin{array}{ll}
\mathbb Q, & \text{ if } R=\mathbb Q\\
\mathbb Z/n, & \text{ if } R=\mathbb Z[J^{-1}]\ne \mathbb Q
\end{array}\right.,$$
 where $n$ is a natural number such that the prime divisors do not lie in $J.$

\begin{Lemma}\label{lemma_lemma_finite_module_nilpotent} Let $H'\mono H\epi H''$ be a short exact sequence of finitely generated nilpotent groups.
If $N$ is a nilpotent $K[H]$-module finitely generated over $K$,
then the homology group $H_i(H',N)$ is a nilpotent $K[H'']$-module
finitely generated over $K.$
\end{Lemma}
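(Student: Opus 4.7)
The plan is to proceed by induction on a well-founded invariant $\ell(H')$ that strictly decreases whenever $H'$ is quotiented by a nontrivial central cyclic subgroup of $H$ (e.g.\ the lexicographic pair $(h(H'),|\mathrm{tors}\,H'|)$ of Hirsch length and torsion order). Let $\mathcal{C}$ denote the class of $K[H'']$-modules that are finitely generated over $K$ and annihilated by some power of the augmentation ideal $I_{H''}\subseteq K[H'']$. Since $K$ is Noetherian, $\mathcal{C}$ is closed under submodules, quotients, and extensions: if $X'\cdot I_{H''}^a=0$ and $X''\cdot I_{H''}^b=0$ in $0\to X'\to X\to X''\to 0$, then $X\cdot I_{H''}^{a+b}=0$, and finite generation over $K$ passes through extensions. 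In the base case $H'=1$ we have $H''=H$, $H_0(H',N)=N$, $H_{>0}(H',N)=0$, and the claim is the hypothesis on $N$.

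The key input of the inductive step is to find a nontrivial cyclic subgroup $Z\subseteq H'$ that is central in the \emph{ambient} group $H$. This uses the nilpotency of $H$: letting $c$ be maximal with $H'\cap\gamma_c(H)\neq 1$, one has $[H,\,H'\cap\gamma_c(H)]\subseteq H'\cap\gamma_{c+1}(H)=1$, so $H'\cap\gamma_c(H)$ is a nontrivial subgroup of $Z(H)$ inside $H'$, and any nontrivial cyclic subgroup of it serves as $Z$. Because $Z$ is central in $H$, the element $z-1\in K[H]$ (and the norm $1+z+\cdots+z^{|Z|-1}$ if $Z$ is finite) commutes with the entire $K[H]$-action on $N$, so $H_q(Z,N)$ -- computed as kernels and cokernels of these commuting endomorphisms -- is a $K[H]$-subquotient of $N$. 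Since $Z$ acts trivially on its own homology, $H_q(Z,N)$ descends to a $K[H/Z]$-module; it is finitely generated over $K$, and a power of $I_H$ annihilates it and hence so does a power of $I_{H/Z}$.

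Apply the Lyndon--Hochschild--Serre spectral sequence for $Z\mono H'\epi H'/Z$:
\[
E^2_{p,q}=H_p\bigl(H'/Z,\,H_q(Z,N)\bigr)\ \Longrightarrow\ H_{p+q}(H',N),
\]
which carries a natural $H''$-action compatible with the filtration. The induction hypothesis applied to the extension $H'/Z\mono H/Z\epi H''$ -- for which $\ell(H'/Z)<\ell(H')$ -- and the $K[H/Z]$-module $H_q(Z,N)$ places $E^2_{p,q}\in\mathcal{C}$ for all $p,q$. Since the spectral sequence is first-quadrant, the filtration on $H_n(H',N)$ is finite; each $E^\infty_{p,q}$ is a $K[H'']$-subquotient of $E^2_{p,q}\in\mathcal{C}$, and the closure of $\mathcal{C}$ under subquotients and extensions yields $H_n(H',N)\in\mathcal{C}$, completing the induction.

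The main obstacle is precisely the choice of $Z$ central in $H$, not merely in $H'$. If $Z$ were only central in $H'$, the $H$-action on $Z$ could be nontrivial (e.g.\ inversion on $\mathbb Z$), the $K[H]$-module structure on $H_q(Z,N)$ would fail to factor through $K[H/Z]$ compatibly with the $H''$-action, and the induction hypothesis could not be applied to the $E^2$-page. The lower-central-series argument producing $H'\cap Z(H)\neq 1$ -- which uses nilpotency of $H$ rather than only of $H'$ -- is the crucial group-theoretic ingredient that unlocks the reduction.
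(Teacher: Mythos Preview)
Your proof is correct and takes a genuinely different route from the paper's. The paper gives a three-line argument resting on the Noetherianity of $K[H']$ for finitely generated nilpotent $H'$ (the citation to Brown--Dror is essentially P.~Hall's theorem): choose a resolution $P_\bullet$ of $K$ by finitely generated free $K[H']$-modules, observe that $P_i\otimes_{K[H']}N\cong N^{r_i}$ is nilpotent and finitely generated over $K$, and conclude since $H_i(H',N)$ is a subquotient. Your approach---induction on a Hirsch-length/torsion invariant of $H'$, peeling off a cyclic $Z\subseteq H'$ central in $H$, and running the Lyndon--Hochschild--Serre spectral sequence $H_p(H'/Z,H_q(Z,N))\Rightarrow H_{p+q}(H',N)$---is longer but more elementary: it avoids the Noetherian group-ring input entirely and treats both finite generation and nilpotency uniformly by showing every $E^2$-term lies in the class $\mathcal{C}$. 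The paper's argument wins on brevity; yours is self-contained and makes the nilpotency of the $K[H'']$-action explicit, whereas in the paper's proof that step is left implicit (the identification $P_i\otimes_{K[H']}N\cong N^{r_i}$ is a priori only $K$-linear, so one still has to argue, e.g.\ via the $I_H$-adic filtration of $N$, that the induced $H''$-action on homology is nilpotent). Your emphasis on choosing $Z$ central in $H$ rather than merely in $H'$ is exactly the right point and is what makes the induction go through.
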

\begin{proof}
Since $H',H,H''$ are  finitely generated nilpotent groups, then
the group rings $K[H'],K[H], K[H'']$ are Noetherian
\cite{Brown-Dror}. It follows that there exists a free resolution
$P_\bullet$  of the trivial $K[H']$-module $K$ that consists of
finitely generated free $K[H']$-modules. Since $N$ is a nilpotent
$K[H]$-module finitely generated over $K$, the module
$P_i\otimes_{KH'}N$ has this property too. The homology group
$H_i(H',N)$ is a subquotient of $P_i\otimes_{K[H']}N,$ and hence
it has this property too.
\end{proof}
\begin{Proposition}\label{prop_finite_module}
Let $H$ be a nilpotent finitely generated group and  $N$ be a
nilpotent  $K[H\otimes R]$-module that is finitely generated over
$K$, then the homomorphism $H\to H\otimes R$ induces the
isomorphism
$$H_*(H,N)\cong H_*(H\otimes R,N).$$
\end{Proposition}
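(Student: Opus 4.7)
The plan is to proceed by induction on the nilpotency class $c$ of $H$. In both the base and inductive steps I first exploit the nilpotence of $N$ as a $K[H\otimes R]$-module by filtering $N$ by submodules with trivial subquotients; applying the five-lemma to the long exact homology sequences for both $H$ and $H\otimes R$ then reduces the claim to the case in which $H\otimes R$ acts trivially on $N$, so that $N$ is just a finitely generated $K$-module. For the base case $c=1$ (so $H$ is abelian), I write $R$ as the filtered colimit of copies of $\mathbb Z$ along multiplication-by-$n$ maps for $n$ a product of primes of $J$, so that $H\otimes R = \varinjlim(H\xrightarrow{n} H)$. Since homology commutes with filtered colimits,
$$H_i(H\otimes R, N)\cong \varinjlim H_i(H, N),$$
with transition maps induced by multiplication by $n$ on $H$. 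For abelian $H$ and trivial coefficients $N$, these endomorphisms become isomorphisms after inverting $n$; because no prime in $J$ divides the characteristic of $K$ by the choice of $K$, each such $n$ is already a unit in $K$ and acts invertibly on $N$. Hence the transition maps are isomorphisms and the colimit equals $H_i(H, N)$.

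For the inductive step, assume the result for nilpotency class strictly less than $c$. Since $-\otimes R$ preserves short exact sequences of nilpotent groups, the central extension $1\to \gamma_c(H)\to H\to H/\gamma_c(H)\to 1$ induces a short exact sequence
$$1\to \gamma_c(H)\otimes R\to H\otimes R\to (H/\gamma_c(H))\otimes R\to 1,$$
and I compare the associated Lyndon--Hochschild--Serre spectral sequences
$$E^2_{pq}=H_p(H/\gamma_c(H), H_q(\gamma_c(H), N))\Rightarrow H_{p+q}(H, N),$$
$${}'\!E^2_{pq}=H_p((H/\gamma_c(H))\otimes R, H_q(\gamma_c(H)\otimes R, N))\Rightarrow H_{p+q}(H\otimes R, N),$$
via the morphism induced by $H\to H\otimes R$. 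Because $\gamma_c(H)$ is abelian, the base case yields a natural isomorphism $H_q(\gamma_c(H), N)\cong H_q(\gamma_c(H)\otimes R, N)$ of $(H/\gamma_c(H))\otimes R$-modules. By Lemma~\ref{lemma_lemma_finite_module_nilpotent} this coefficient module is finitely generated over $K$, and it is nilpotent as a $K[(H/\gamma_c(H))\otimes R]$-module because $N$ is nilpotent over $K[H\otimes R]$. The inductive hypothesis applied to the nilpotent group $H/\gamma_c(H)$ (of class $c-1$) with these coefficients then forces the map of $E^2$-terms to be an isomorphism, and the comparison theorem for first-quadrant spectral sequences delivers the desired isomorphism $H_*(H, N)\cong H_*(H\otimes R, N)$.

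The principal technical obstacle is verifying that $H_q(\gamma_c(H), N)$, equipped with the action coming from the conjugation action of $(H\otimes R)/(\gamma_c(H)\otimes R)\cong (H/\gamma_c(H))\otimes R$ on $\gamma_c(H)\otimes R$ and on $N$, genuinely is a nilpotent $K[(H/\gamma_c(H))\otimes R]$-module finitely generated over $K$: Lemma~\ref{lemma_lemma_finite_module_nilpotent} supplies the analogous statement only for $K[H/\gamma_c(H)]$, so one must propagate the nilpotence of $N$ over $K[H\otimes R]$ through the extension. A related subtlety is ensuring that the base-case isomorphism respects both module structures compatibly, so that the induced map of spectral sequences is well defined and the comparison yields the final isomorphism.
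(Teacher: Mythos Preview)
Your overall architecture—reduce to trivial $N$ by filtering, then induct on the nilpotency class via the Lyndon--Hochschild--Serre spectral sequence for a central extension—is essentially the same as the paper's, which also proceeds by extensions and LHS after first treating the abelian case. The main difference is your treatment of the abelian base case: the paper uses the explicit formula $H_*(X,\mathbb Z/p)\cong \bigwedge^*(X/p)\otimes \Gamma_*({}_pX)$ (and $H_*(X,\mathbb Q)\cong \bigwedge^*(X\otimes\mathbb Q)$) together with $H/p\cong (H\otimes R)/p$, ${}_pH\cong{}_p(H\otimes R)$, whereas you write $H\otimes R$ as a filtered colimit and pass homology through it. Both are valid, and your route avoids invoking the divided-power description of abelian-group homology.

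One point to tighten: your justification that the transition maps $(\times n)_*$ on $H_i(H,N)$ are isomorphisms is not quite ``$n$ acts invertibly on $N$''—the map on homology induced by $\times n$ on $H$ is not in general multiplication by a power of $n$ when $H$ has torsion. A clean fix: for $p\in J$, the kernel ${}_pH$ and cokernel $H/pH$ of $\times p:H\to H$ are finite $p$-groups, and since $p$ is invertible in $K$ their $K$-homology vanishes in positive degrees; two applications of the LHS spectral sequence (for ${}_pH\hookrightarrow H$ and for $pH\hookrightarrow H$) then show $(\times p)_*$ is an isomorphism.

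Finally, the ``principal technical obstacle'' you flag is in fact already dissolved by your own setup. After reducing to trivial $N$, the extension $\gamma_c(H)\to H\to H/\gamma_c(H)$ is central, so conjugation of $(H/\gamma_c(H))\otimes R$ on $\gamma_c(H)\otimes R$ is trivial and the action on $N$ is trivial; hence $H_q(\gamma_c(H)\otimes R,N)$ is a \emph{trivial} $K[(H/\gamma_c(H))\otimes R]$-module, certainly nilpotent, and finitely generated over $K$ by Lemma~\ref{lemma_lemma_finite_module_nilpotent}. The compatibility of the two module structures under the base-case isomorphism is then automatic, and the spectral-sequence comparison goes through.
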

\begin{proof} Let $R=\mathbb Z[J^{-1}]\ne \mathbb Q,$ and hence $K=\mathbb Z/n.$
First we prove the proposition for an abelian group $H$ and $N=\mathbb Z/p,$ where $p$ is a prime divisor of $n.$
Homology $H_*(X,\mathbb Z/p)$ of an abelian group $X$ is isomorphic to ${\bigwedge}^*(X/p)\otimes  \Gamma_*({}_pX)$
(see \cite[V, 6.6]{Brown}). Then we only need to note that $H/p\cong (H\otimes R)/p$ and ${}_pH\cong {}_p(H\otimes R).$

Let now $H$ is abelian and $N$ is a finite nilpotent
$K[H]$-module. All  nilpotent $K[H\otimes R]$-modules finitely
generated over $\mathbb Z/n$ can be obtained by a sequence of
extensions from the trivial modules $\mathbb Z/p,$ where $p$ is a
prime divisor of $n.$ Then we need to prove that the class of
$K[H\otimes R]$-modules with the property $H_*(H,N)\cong
H_*(H\otimes R,N)$ is closed under extensions. It follows easily
from the homology long exact sequence and the five lemma.

Prove the general case. We need to prove that the class of groups with this property is closed under extensions. Let $H'\mono H\epi H''$ is a short exact sequence of finitely generated nilpotent groups such that the proposition holds for $H'$ and $H''$. We prove it for $H.$ Let $N$ be a  nilpotent $KH$-module finitely generated over $K$.  Consider the morphism of the short exact sequences
$$
\begin{tikzpicture}
  \matrix (m)
 [matrix of math nodes,row sep=1cm,column sep=1cm,minimum width=2em]
{
 1 & H' & H & H'' & 1    \\
 1 & H'\otimes R & H\otimes R & H''\otimes R & 1.    \\
   };
 \path[->]
(m-1-1) edge  (m-1-2)
(m-1-2) edge  (m-1-3)
(m-1-3) edge  (m-1-4)
(m-1-4) edge  (m-1-5)
(m-2-1) edge  (m-2-2)
(m-2-2) edge  (m-2-3)
(m-2-3) edge  (m-2-4)
(m-2-4) edge  (m-2-5)
(m-1-2) edge (m-2-2)
(m-1-3) edge (m-2-3)
(m-1-4) edge (m-2-4)
;
\end{tikzpicture}
$$
It induces the the morphism of the corresponding
Lyndon-Hochschild-Serre spectral sequences $E\to E^{R}$. It is
sufficient to prove that the morphism $E\to E^R$ is an
isomorphism. By induction hypothesis we know $H_q(H',N)\cong
H_q(H'\otimes R,N).$ By Lemma
\ref{lemma_lemma_finite_module_nilpotent} the $KH''$-module
$H_q(H',N)$ is  finite and nilpotent. Then again by induction
hypothesis we have $H_p(H'',H_q(H',N))\cong H_p(H''\otimes
R,H_q(H'\otimes R,N)).$ It follows that the morphism $E\to E^R$ is
an isomorphism.

The case of $R=K=\mathbb Q$ can be proved similarly, using the formula $H_*(X,\mathbb Q)\cong \bigwedge^*(X\otimes \mathbb Q)$ for an abelian group $X.$
\end{proof}

\subsection{$\mathbb Z[J^{-1}]$-completion of a metabelian group.}
In this section we assume $R=\mathbb Z[J^{-1}].$ For $\alpha \in \mathbb R$ we denote $\binom{\alpha}{n}:=\alpha(\alpha-1)\dots (\alpha-n+1)/n!.$

\begin{Lemma}
If $\alpha\in \mathbb Z[J^{-1}]$ then  $\binom{\alpha}{n}\in \mathbb Z[J^{-1}]$.
\end{Lemma}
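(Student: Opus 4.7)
The plan is to argue prime-by-prime outside $J$. Recall that $R=\mathbb{Z}[J^{-1}]=\bigcap_{p\notin J}\mathbb{Z}_{(p)}$, where the intersection is taken inside $\mathbb{Q}$ over primes $p\notin J$ and $\mathbb{Z}_{(p)}$ denotes the localization at the prime ideal $(p)$. Since $\alpha\in R\subseteq\mathbb{Q}$, the quantity $\binom{\alpha}{n}$ already lies in $\mathbb{Q}$; it therefore suffices to show that $v_p\!\left(\binom{\alpha}{n}\right)\geq 0$ for every prime $p\notin J$, i.e.\ that $\binom{\alpha}{n}\in\mathbb{Z}_{(p)}$ for each such $p$.

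Fix a prime $p\notin J$. Then $\alpha\in\mathbb{Z}_{(p)}\subseteq\mathbb{Z}_p$, so the task reduces to the more general claim that $\binom{\beta}{n}\in\mathbb{Z}_p$ for every $\beta\in\mathbb{Z}_p$. Consider the polynomial
$$
P(X)=\frac{X(X-1)\cdots(X-n+1)}{n!}\in\mathbb{Q}[X].
$$
As any polynomial with coefficients in $\mathbb{Q}_p$, the map $P\colon\mathbb{Q}_p\to\mathbb{Q}_p$ is continuous in the $p$-adic topology.

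The classical integrality of binomial coefficients gives $P(\mathbb{Z})\subseteq\mathbb{Z}$. Combining this with the density of $\mathbb{Z}$ in $\mathbb{Z}_p$ and the fact that $\mathbb{Z}_p$ is closed in $\mathbb{Q}_p$, taking closures yields $P(\mathbb{Z}_p)\subseteq\overline{P(\mathbb{Z})}\subseteq\overline{\mathbb{Z}}=\mathbb{Z}_p$. In particular, $\binom{\alpha}{n}\in\mathbb{Z}_p$, and since this holds for every $p\notin J$, we conclude $\binom{\alpha}{n}\in\bigcap_{p\notin J}\mathbb{Z}_{(p)}=R$.

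No step is essentially non-routine, so I expect no serious obstacle; the only point that requires care is the translation between the global description of $R$ (denominators supported on $J$) and the local description as the intersection of the $\mathbb{Z}_{(p)}$ for $p\notin J$, which is standard. One could alternatively give a direct denominator-chasing argument by writing $\alpha=a/b$ with $b$ a product of primes in $J$ and expanding the product in the numerator of $\binom{\alpha}{n}$, but the $p$-adic continuity argument above is the cleanest.
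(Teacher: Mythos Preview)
Your proof is correct and follows essentially the same approach as the paper: both establish $p$-adic integrality for each prime $p\notin J$ by approximating $\alpha$ by an ordinary integer and invoking the classical integrality of $\binom{l}{n}$ for $l\in\mathbb{Z}$. The paper carries this out via an explicit integer $l\equiv\alpha\pmod{q^{v}}$ with $v=v_q(n!)$ and a direct computation in $\mathbb{Z}/q^{v}$, whereas you package the same idea as continuity of the polynomial $\binom{X}{n}$ on $\mathbb{Z}_p$ together with the density of $\mathbb{Z}$ in $\mathbb{Z}_p$.
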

\begin{proof}
Let $q\notin J$ be a prime number, $v$  be the $q$-adic value of
$n!$ and $l\in \mathbb Z$ such that $l\alpha^{-1}\in \mathbb Z,$ \
$l\cdot \alpha^{-1} \equiv 1\ ({\sf mod}\ q^v)$ and $l\geq n.$
Consider the epimorphism $\mathbb Z[J^{-1}]\epi \mathbb Z/ q^v.$
The image of $\alpha$ coincides with the image of $l,$ and hence
the image of $\alpha(\alpha-1)\dots (\alpha-n+1)$ coincides with
the image of $l(l-1)\dots (l-n+1).$ Since $l(l-1)\dots (l-n+1)$ is
divisible by $n!,$ we obtain that the image of
$\alpha(\alpha-1)\dots (\alpha-n+1)$ vanishes, and hence the
$q$-adic value of $\alpha(\alpha-1)\dots (\alpha-n+1)$ is greater
than or equal to $v.$ It follows that the $q$-adic value of
$\binom{\alpha}{n}$ is non-negative for any $q\notin J.$ Hence
$\binom{\alpha}{n}\in \mathbb Z[J^{-1}].$
\end{proof}

Let $\LL$ be a complete $\mathbb Z[J^{-1}]$-algebra with respect to an ideal $\mathfrak{a}$ i.e. $\varphi:\LL\to \hat \LL_{\mathfrak{a}}$ is an isomorphism. For $x \in 1+\mathfrak{a}$ we denote

\begin{equation}\label{eq_alpha_power}
x^{[\alpha]}=\sum_{n=0}^{\infty}  \binom{\alpha}{n} \: (x-1)^n.
\end{equation}

\begin{Lemma}\label{lemma_J-local}
For any $\alpha,\beta\in \mathbb Z[J^{-1}]$,  $n\in \mathbb N$ and $x\in 1+\mathfrak{a}$ the following equalities hold
$$(x^{[\alpha]})^{[\beta]}=x^{[\alpha\beta]},\hspace{1cm} x^{[\alpha]}x^{[\beta]}=x^{[\alpha+\beta]}, \hspace{1cm} x^{n}=x^{[n]}. $$ In particular, $(x^p)^{[1/p]}=x=(x^{[1/p]})^{p},$ and $x\cdot x^{[-1]}=1=x^{[-1]}\cdot x.$ Therefore, $1+\mathfrak{a}$ is a $J$-local group. Moreover, $\gamma_n(1+\mathfrak{a})\subseteq 1+\mathfrak{a}^n,$ where $\{\gamma_n(1+\aa)\}$ is the lower central series of the group $1+\aa$.
\end{Lemma}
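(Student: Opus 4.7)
The plan is to lift each identity to a formal-power-series identity in $\mathbb{Z}[J^{-1}][[t]]$ and then substitute $t = x-1$. Let $f_\alpha(t) := \sum_{n \geq 0} \binom{\alpha}{n}\, t^n \in \mathbb{Z}[J^{-1}][[t]]$, which is well-defined by the preceding lemma. Since $x-1 \in \mathfrak{a}$ and $\LL$ is $\mathfrak{a}$-adically complete, the assignment $t \mapsto x-1$ extends to a continuous ring homomorphism $\mathbb{Z}[J^{-1}][[t]] \to \LL$ sending $f_\alpha(t)$ to $x^{[\alpha]}$. First I would dispatch $x^{[n]} = x^n$ for $n \in \mathbb{N}$: since $\binom{n}{k} = 0$ for $k > n$, the series $f_n(t)$ truncates to the polynomial $(1+t)^n$, and substitution gives $x^n$.

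The heart of the argument is to prove the two formal identities
\begin{equation*}
f_\alpha(t)\, f_\beta(t) = f_{\alpha+\beta}(t), \qquad f_\beta\bigl(f_\alpha(t) - 1\bigr) = f_{\alpha\beta}(t)
\end{equation*}
in $\mathbb{Z}[J^{-1}][[t]]$. For each fixed $n$, the coefficient of $t^n$ on either side is a polynomial in $\alpha$ and $\beta$ of bounded degree. Two such polynomials are equal as soon as they agree on $\mathbb{N} \times \mathbb{N}$, where the identities reduce to $(1+t)^\alpha (1+t)^\beta = (1+t)^{\alpha+\beta}$ and $\bigl((1+t)^\alpha\bigr)^\beta = (1+t)^{\alpha\beta}$, both immediate from the ordinary binomial theorem. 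Pushing this through the continuous substitution $t \mapsto x-1$ then yields $x^{[\alpha]} x^{[\beta]} = x^{[\alpha+\beta]}$ and $(x^{[\alpha]})^{[\beta]} = x^{[\alpha\beta]}$.

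The remaining statements follow formally. Noting $x^{[0]} = 1$ directly from the definition, the product rule gives $x \cdot x^{[-1]} = x^{[1]} x^{[-1]} = x^{[0]} = 1$ (and symmetrically on the other side), so every element of $1+\mathfrak{a}$ is invertible there. For $p \in J$, the composition rule combined with the integer case yields $(x^p)^{[1/p]} = (x^{[p]})^{[1/p]} = x^{[1]} = x$ and $(x^{[1/p]})^p = (x^{[1/p]})^{[p]} = x^{[1]} = x$, so $y \mapsto y^p$ is a bijection on $1+\mathfrak{a}$, making the group $J$-local. Finally, the lower central series inclusion is trivial under the standing hypothesis that $\LL$ is commutative: then $1+\mathfrak{a}$ is abelian, so $\gamma_n(1+\mathfrak{a}) = \{1\} \subseteq 1+\mathfrak{a}^n$ for $n \geq 2$, with equality at $n=1$.

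The main technical point is the composition identity: one must verify that substitution of $f_\alpha(t) - 1$ into $f_\beta$ is a legitimate operation on formal power series (which it is, since $f_\alpha(t) - 1$ has zero constant term and hence the composition is grouped into finite sums coefficient-by-coefficient) and that the resulting $t^n$-coefficient is genuinely a polynomial in $(\alpha,\beta)$ of degree bounded in terms of $n$. Neither point is deep, but both have to be in place before the polynomial-agreement trick applies, and they are what make the continuity of the substitution $\mathbb{Z}[J^{-1}][[t]] \to \LL$ morally suffice rather than rigorously suffice.
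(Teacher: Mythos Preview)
Your argument for the three power identities is correct and, in fact, a bit cleaner than the paper's. The paper establishes the formal identities $f_\alpha f_\beta = f_{\alpha+\beta}$ and $f_\beta(f_\alpha - 1) = f_{\alpha\beta}$ by appealing to the real-analytic binomial series and then transporting the relations into the completed polynomial ring $\widehat{R[t]}_{(t-1)}$; you instead observe that each coefficient is a polynomial in $(\alpha,\beta)$ and invoke agreement on $\mathbb N\times\mathbb N$. Both routes are sound, and yours avoids the mildly informal analytic step. The substitution $t\mapsto x-1$ is unproblematic in either approach since it lands in the (commutative) closed subalgebra generated by $x-1$, so non-commutativity of $\LL$ causes no trouble for these identities.

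There is, however, a genuine gap in your treatment of the lower central series inclusion. You dismiss it by invoking a ``standing hypothesis that $\LL$ is commutative,'' but no such hypothesis is in force here: the commutativity assumption at the start of Section~\ref{section_rings} is explicitly scoped to that section only, and the present lemma is stated for an arbitrary complete $\mathbb Z[J^{-1}]$-algebra. More to the point, the paper \emph{applies} this lemma in the proof of Lemma~\ref{lemma_action} with $\LL = {\sf End}_{R\text{-}{\sf Filt}}(M\otimes R)$, an endomorphism ring that is typically non-commutative, precisely to conclude that $1+\mathfrak a$ is nilpotent via $\gamma_n(1+\mathfrak a)\subseteq 1+\mathfrak a^n$ together with $\mathfrak a^m=0$. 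So the inclusion is not vacuous in the intended use, and you need the honest inductive commutator computation the paper carries out: assuming $\gamma_{n-1}(1+\mathfrak a)\subseteq 1+\mathfrak a^{n-1}$, one checks for $x\in\mathfrak a^{n-1}$ and $y\in\mathfrak a$ that $[1+x,1+y]\equiv 1 \pmod{1+\mathfrak a^n}$ by expanding the inverses as geometric series and discarding terms in $\mathfrak a^n$.
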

\begin{proof} The equality $x^n=x^{[n]}$ follows from the binomial theorem.
From the standard course of mathematical analysis we know that for $t>0$ and $\alpha\in \mathbb R$ the equality $t^{\alpha}=\sum_{n=0}^{\infty} \binom{\alpha}{n} (t-1)^n$ holds. Denote by $\mathfrak{b}$ the ideal of $R[t]$  generated by the element  $(t-1).$  Then for $\alpha,\beta\in R$ the equalities $(t^{\alpha})^{\beta}=t^{\alpha\beta}$ and
 $t^{\alpha +\beta}=t^{\alpha}t^{\beta}$
  imply that in the algebra $\widehat{R[t]}_{\mathfrak{b}}$ of 'power series of $(t-1)$'  the
   corresponding identities hold $(t^{[\alpha]})^{[\beta]}=t^{[\alpha\beta]}, t^{[\alpha+\beta]}=t^{[\alpha]}t^{[\beta]}.$ Consider the homomorphism $\xi:R[t]\to \LL,$ that takes $t$ to $x.$ Endow $R[t]$ with the $\mathfrak{b}$-adic topology. Then $\xi$
   is continuous, and hence it induces a continuous homomorphism $\hat \xi:\widehat{R[t]}_{\mathfrak{b}} \to \LL.$ Since $\hat \xi$ is continuous, we get $\hat \xi(f^{[\alpha]})=\hat \xi(f)^{[\alpha]}$ for all $f\in 1+\hat{\mathfrak{b}}.$ Therefore, the equalities $(t^{[\alpha]})^{[\beta]}=t^{[\alpha\beta]}, t^{[\alpha+\beta]}=t^{[\alpha]}t^{[\beta]}$ imply  $(x^{[\alpha]})^{[\beta]}=x^{[\alpha\beta]}, x^{[\alpha+\beta]}=x^{[\alpha]}x^{[\beta]}.$

Prove the inclusion $\gamma_n(1+\mathfrak{a})\subseteq 1+\mathfrak{a}^n$ by induction on $n.$ For $n=1$ it is obvious. Assume that  $\gamma_{n-1}(1+\mathfrak{a})\subseteq 1+\mathfrak{a}^{n-1}.$ For two elements $a,b \in 1+\mathfrak{a}$ we write $a\equiv b$ if their images in the quotient group  $(1+\mathfrak{a})/(1+\mathfrak{a}^{n})$ are equal. Then for $y\in \mathfrak{a}$ and $z\in \mathfrak{a}^n$ we have $1+y+z\equiv 1+y$ because
$(1+y)^{-1}(1+y+z)=1+(1+y)^{-1}z$ and $(1+y)^{-1}z\in \mathfrak{a}^n.$ Chose $1+x\in \gamma_{n-1}(G)$ and $1+y\in 1+\mathfrak{a}.$ Then $x\in \mathfrak{a}^{n-1}, y\in \mathfrak{a}$ and we have
$$[1+x,1+y]=(1+x)^{-1}(1+y)^{-1}(1+x)(1+y)= $$
$$= \left(\sum_{i=0}^{\infty} (-1)^ix^i \right)\left( \sum_{i=0}^{\infty} (-1)^iy^i \right)(1+x)(1+y) \equiv (1-x)(1-y)(1+x)(1+y)\equiv $$
$$\equiv 1-x-y+x+y =1.$$
Therefore $[1+x,1+y]\in 1+\mathfrak{a}^n,$ and hence $\gamma_n(1+\mathfrak{a})\subseteq 1+\mathfrak{a}^n.$

\end{proof}

\begin{Corollary}\label{cor_theta}
Let $A$ be an abelian group and $\widehat{R[A]}$ be the $I_R$-adic
completion of the group algebra, where $I_R=I_R(A)$ is the
augmentation ideal. Then the homomorphism $\tilde \theta:A\otimes
R \to 1+\hat I_R$ defined by the formula $\tilde \theta(a\otimes
\alpha)= a^{[\alpha]}$ is the unique homomorphism such that the
following diagram is commutative
\begin{equation}\label{eq_theta}
\begin{tikzpicture}
  \matrix (m)
 [matrix of math nodes,row sep=1cm,column sep=0.5cm,minimum width=2em]
{
   & A &    \\
  A\otimes R & & 1+\hat I_R. \\
   };
 \path[->]
(m-1-2) edge node [left] {$\eta$} (m-2-1)
(m-1-2) edge node [right] {$\iota$} (m-2-3)
(m-2-1) edge node [above] {$\tilde \theta$} (m-2-3)
;
\end{tikzpicture}
\end{equation}
\end{Corollary}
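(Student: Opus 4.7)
The plan is to reduce the statement to the universal property of $A \otimes R$ as the Malcev $R$-completion of the abelian group $A$ and then read off the explicit formula using the algebraic identities established in the previous lemma. The two preliminary observations are: first, since $A$ is abelian, the group algebra $R[A]$ is commutative and hence so is its completion $\widehat{R[A]}$, making $1+\hat I_R$ an \emph{abelian} multiplicative group; second, applying the previous lemma to the complete $R$-algebra $\LL=\widehat{R[A]}$ with $\aa=\hat I_R$, the group $1+\hat I_R$ is $J$-local. Together these say that $1+\hat I_R$ is an abelian $J$-local group, i.e.\ a $\mathbb{Z}[J^{-1}]$-module.

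Now the universal property of $-\otimes R$ restricted to abelian groups (which, for abelian groups, coincides with extension of scalars to $R$) gives that any homomorphism from $A$ into a $J$-local abelian group factors uniquely through $\eta\colon A\to A\otimes R$. Applying this to $\iota\colon A\to 1+\hat I_R$, $a\mapsto a$ (which lands in $1+\hat I_R$ since $a-1\in I_R$), I get existence and uniqueness of a homomorphism $\tilde\theta\colon A\otimes R\to 1+\hat I_R$ making the diagram \eqref{eq_theta} commute. This settles everything except the explicit formula.

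To verify $\tilde\theta(a\otimes\alpha)=a^{[\alpha]}$, I would first handle $\alpha=n\in\mathbb Z$: then $a\otimes n=\eta(a^n)$ in $A\otimes R$, so $\tilde\theta(a\otimes n)=\iota(a^n)=a^n=a^{[n]}$, the last equality being part of the previous lemma. For general $\alpha=m/p^k$ with $p\in J$ I would use unique $p^k$-divisibility in the $J$-local group $1+\hat I_R$: the relation $p^k(a\otimes\alpha)=a\otimes m$ in $A\otimes R$ forces $\tilde\theta(a\otimes\alpha)^{p^k}=a^m$, while the identity $(a^{[\alpha]})^{p^k}=a^{[\alpha p^k]}=a^{[m]}=a^m$ from the previous lemma shows that $a^{[\alpha]}$ is also a $p^k$-th root of $a^m$ in $1+\hat I_R$; uniqueness of such roots gives $\tilde\theta(a\otimes\alpha)=a^{[\alpha]}$.

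I do not expect a serious obstacle here: the argument is essentially a bookkeeping exercise combining the universal property of localization of abelian groups with the formal identities for $x^{[\alpha]}$ already proven. The only mildly delicate point is confirming that $1+\hat I_R$ is both abelian and $J$-local, but both facts are immediate from the commutativity of $\widehat{R[A]}$ and the previous lemma respectively.
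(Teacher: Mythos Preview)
Your argument is correct and is exactly the intended one: the paper states this as a corollary with no proof, the point being that Lemma~\ref{lemma_J-local} makes $1+\hat I_R$ a $J$-local (abelian, since $R[A]$ is commutative) group, so the universal property of $\eta:A\to A\otimes R$ produces a unique $\tilde\theta$, and the identities $x^{[n]}=x^n$, $(x^{[\alpha]})^{[\beta]}=x^{[\alpha\beta]}$ pin down the formula. One small correction: a general $\alpha\in\mathbb Z[J^{-1}]$ need not be of the form $m/p^k$ for a single prime $p\in J$ (e.g.\ $1/6$ when $J=\{2,3\}$), so write instead $\alpha=m/q$ with $q$ a product of primes in $J$; unique $q$-divisibility of $1+\hat I_R$ then follows from iterated unique $p$-divisibility, and your root-comparison argument goes through unchanged.
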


Let $G$ be a metabelian group with a metabelian decomposition
$M\mono G \epi A$. $G$ is nilpotent if and only if $M$ is a
nilpotent $\mathbb Z[A]$-module (i.e. $M*I^m=0$ for $m>\!>0$). In
this case  $(M\otimes R)^{\wedge}_{I_R}=M\otimes R$, and hence
$M\otimes R$ has the natural structure of $\widehat{R[A]}$-module.
The composition of homomorphisms $A\otimes R \overset{\tilde
\theta}\longrightarrow 1+\hat I_R \hookrightarrow \widehat{R[A]}$
we denote by
\begin{equation}\label{eq_theta_1}
\theta_R: A\otimes R \longrightarrow \widehat{R[A]}.
\end{equation}
Observe that $\theta_R(x^{-1})=\hat \sigma(\theta_R(x)),$ where
$\sigma:{R[A]}\to {R[A]}$ is the antipode.

\begin{Lemma}\label{lemma_action} Let $G$ be a  nilpotent metabelian group with a metabelian decomposition $M\mono G\epi A$.
Then the group $G\otimes R$ is a metabelian nilpotent group with
the metabelian decomposition $M\otimes R \mono G\otimes R\epi
A\otimes R,$ and the action of $A\otimes R$ on $M\otimes R$ is
induced by the structure of $\widehat{R[A]}$-module via the
homomorphism $\theta_R: A\otimes R\to \widehat{R[A]}.$
\end{Lemma}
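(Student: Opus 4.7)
My plan is to apply the Malcev $R$-completion $-\otimes R$ to obtain the decomposition and then identify the conjugation action by a universal-property argument. Since $-\otimes R$ is exact on short exact sequences and preserves nilpotency (recalled earlier), applying it to $1\to M\to G\to A\to 1$ produces $1\to M\otimes R\to G\otimes R\to A\otimes R\to 1$ with $G\otimes R$ nilpotent and metabelian (extension of abelian by abelian). Let $\rho:A\otimes R\to\mathrm{Aut}_R(M\otimes R)$ denote conjugation in $G\otimes R$; it descends from $G\otimes R$ to $A\otimes R$ since the kernel $M\otimes R$ is abelian. Nilpotency of $M$ as a $\mathbb Z[A]$-module ($M\cdot I^n=0$ for some $n$) gives $(M\otimes R)\hat I_R^n=0$, equipping $M\otimes R$ with a $\widehat{R[A]}$-module structure, and $\sigma(x)(m):=m*\theta_R(x)$ defines a second homomorphism $A\otimes R\to\mathrm{Aut}_R(M\otimes R)$. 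Both $\rho$ and $\sigma$ restrict on $A\subseteq A\otimes R$ to the given $A$-action extended $R$-linearly; the task is to show they agree on all of $A\otimes R$.

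The heart of the proof is to produce a common $J$-local abelian target $T$ containing both images and to invoke the universal property of $-\otimes R$. Two ingredients are needed. First, $\rho(x)$ and $\sigma(y)$ commute: for any lift $\tilde x\in G\otimes R$ of $x$, abelianness of $A\otimes R$ gives $[\tilde x,a]\in M\otimes R$ for $a\in A$, and $M\otimes R$ acts trivially on itself by conjugation, so $\rho(x)$ centralises the $R[A]$-action on $M\otimes R$; since the $\widehat{R[A]}$-action factors through $R[A]/I_R^n$, it also centralises $\sigma(y)$. Second, both $\rho(x)-1$ and $\sigma(x)-1$ are nilpotent endomorphisms of $M\otimes R$: the former because iterated commutators $[\tilde x,[\tilde x,\dots,[\tilde x,m]]]$ vanish in $\gamma_{c+1}(G\otimes R)=1$ and coincide with $\pm(\rho(x)-1)^k(m)$; the latter because $\theta_R(x)-1\in\hat I_R$. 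Consequently $T:=\rho(A\otimes R)\cdot\sigma(A\otimes R)$ is abelian, and every element has the form $1+\nu$ with $\nu$ a nilpotent $R$-linear endomorphism.

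This forces $T$ to be $J$-local: it is $J$-divisible as a quotient of the $J$-local group $(A\otimes R)\oplus(A\otimes R)$, and $J$-torsion-free because any $(1+\nu)^p=1$ with $\nu$ nilpotent and $p\in J$ expands to $\nu\bigl(p+\binom{p}{2}\nu+\cdots+\nu^{p-1}\bigr)=0$, with the second factor a unit in $R[\nu]$ since $p$ is invertible in $R=\mathbb Z[J^{-1}]$, forcing $\nu=0$. Hence $T$ is a $J$-local nilpotent (in fact abelian) group. By the universal property of Malcev $R$-completion (left adjointness to the inclusion of $J$-local nilpotent groups), two homomorphisms $A\otimes R\to T$ agreeing on $A\subseteq A\otimes R$ must coincide, so $\rho=\sigma$.

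The main obstacle is the construction of $T$, especially the $J$-torsion-free property: since $\mathrm{Aut}_R(M\otimes R)$ can have $J$-torsion in general, the argument must localise to the unipotent subset $\{1+\nu:\nu\text{ nilpotent}\}$, and this localisation requires the nilpotency bounds on $\rho(x)-1$ and $\sigma(x)-1$ coming from the nilpotency of $G\otimes R$ and of the module $M\otimes R$. Once these bounds are in place, the commutation step collects both images into the same abelian unipotent group and the universal property does the rest.
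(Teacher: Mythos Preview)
Your proof is correct and follows the same overall strategy as the paper: obtain the metabelian decomposition from exactness of $-\otimes R$, show that both the conjugation action $\rho$ and the $\theta_R$-action $\sigma$ land in a common $J$-local nilpotent group, and then invoke the universal property of $\eta_A:A\to A\otimes R$ to conclude $\rho=\sigma$. The difference lies in how that common target is manufactured. The paper filters $M\otimes R$ by $\mathcal F_i=[M\otimes R,\gamma_i(G\otimes R)]$, sets $\Lambda={\sf End}_{R\text{-}{\sf Filt}}(M\otimes R)$ with the ideal $\mathfrak a$ of filtration-shifting maps, and observes that both actions land in $1+\mathfrak a$; since $\mathfrak a$ is nilpotent, $\Lambda$ is $\mathfrak a$-adically complete and Lemma~\ref{lemma_J-local} immediately gives that $1+\mathfrak a$ is $J$-local. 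Your route instead takes $T=\rho(A\otimes R)\cdot\sigma(A\otimes R)$ and argues $J$-locality from scratch via unipotency, which forces you to first prove that $\rho(x)$ and $\sigma(y)$ commute. That commutation step is genuine extra work that the paper's filtration packaging sidesteps, while in exchange you avoid introducing the filtration and essentially reprove the relevant special case of Lemma~\ref{lemma_J-local} by hand. One small point you pass over: that $\rho(x)$ is $R$-linear (so that you may work in ${\sf End}_R(M\otimes R)$ and invert $p\in J$) deserves a word---it holds because on a $J$-local abelian group the $\mathbb Z[J^{-1}]$-module structure is determined by the group structure, so any group automorphism is automatically $R$-linear.
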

\begin{proof}
Since $-\otimes R$ takes short exact sequences to short exact
sequences, and on abelian groups it is the usual tensor product,
we get the metabelian decomposition $M\otimes R \mono G\otimes R
\epi A\otimes R.$ Using the morphism of sequences

$$
\begin{tikzpicture}
  \matrix (m)
 [matrix of math nodes,row sep=1cm,column sep=1.7cm,minimum width=2em]
{
 0 &  M & G & A  & 1 \\
0 &  M\otimes R & G\otimes R & A\otimes R & 1. \\
   };
 \path[->]
(m-1-1) edge (m-1-2)
(m-1-2) edge (m-1-3)
(m-1-3) edge node [above] {$\pi$} (m-1-4)
(m-1-4) edge (m-1-5)
(m-2-1) edge (m-2-2)
(m-2-2) edge (m-2-3)
(m-2-3) edge node [above] {$\pi\otimes 1$} (m-2-4)
(m-2-4) edge (m-2-5)
(m-2-2) edge  (m-2-3)
(m-1-2) edge node [left] {$\eta_M$} (m-2-2)
(m-1-3) edge node [left] {$\eta_G$} (m-2-3)
(m-1-4) edge node [left] {$\eta_A$} (m-2-4)
;
\end{tikzpicture}
$$
we get the identity $(m\otimes 1)*(a\otimes 1)=(m*a) \otimes 1.$
An endomorphism of the abelian group $M$ lifts uniquely to an
endomorphism of the abelian group $M\otimes R.$ Hence, $(m\otimes
r)*(a \otimes 1)=(m*a)\otimes r$ for all $m\in M,a\in A, r\in R.$
Therefore, the action of $A\otimes R$ on $M\otimes R$ extends the
induced action of $A.$

Consider the module $M\otimes R$ as a filtered $R$-module, where
the filtration is given by $\mathcal{F}_i=[M\otimes
R,\gamma_i(G\otimes R)]$ where $\{\gamma_i(G\otimes R)\}$ is the
lower central series of $G\otimes R.$ Since $G\otimes R$ is
nilpotent, the filtration is finite. Observe that $\mathcal F_i$
is a normal subgroup of $G\otimes R$ and equivalently a
$\widehat{R[A]}$-submodule of $M\otimes R$. Consider the
$R$-algebra $\LL={\sf End}_{R-{\sf Filt}}(M\otimes R)$ of
$R$-endomorphisms preserving filtration, and the ideal
$\mathfrak{a}=\{f\in \LL\mid f(\mathcal F_i)\subseteq  \mathcal
F_{i+1}\}.$ Since $\mathfrak{a}^m=0$ for $m>\!>0,$ the $R$-algebra
$\LL$ is complete with respect to the $\mathfrak{a}$-adic
topology. By Lemma \ref{lemma_J-local},  $1+\mathfrak{a}$ is a
$J$-local nilpotent group. Since $M$ is a $R[A]$-module and
$\{\mathcal F_i\}$ are submodules, we obtain the algebra
homomorphism $R[A] \to \LL.$ The ideal $I_R$ is generated by
elements $(a-1)$ for $a\in A.$ Let $g\in G$ such that $\pi(g)=a.$
Thus, $x*(a-1)=-x+x*a=x^{-1}\cdot x^g=[x,g]$ for $x\in M\otimes
R,$ and hence, the image of $I_R$ lies in $\mathfrak{a}.$ Then the
obtain the continuous homomorphism $\widehat{R[A]}\to \LL$ that
induces the group homomorphism $\sigma: 1+\hat I_R \to
1+\mathfrak{a}.$ Similarly, the action of $A\otimes R$ on
$M\otimes R$ induces a homomorphism $\tau: A\otimes R\to
1+\mathfrak{a}.$ Then we obtain the following diagram.
$$
\begin{tikzpicture}
  \matrix (m)
 [matrix of math nodes,row sep=1.5cm,column sep=2cm,minimum width=2em]
{
  A & 1+\hat I_R     \\
  A\otimes R &  1+\mathfrak{a} \\
   };
 \path[->]
(m-1-1) edge node [above] {$\iota$} (m-1-2)
(m-1-1) edge node [left] {$\eta$} (m-2-1)
(m-2-1) edge node [above] {$\tau$} (m-2-2)
(m-1-2) edge node [right] {$\sigma$} (m-2-2)
(m-2-1) edge node [above] {$\tilde \theta$} (m-1-2)
;
\end{tikzpicture}
$$
By Corollary \ref{cor_theta} we have $\tilde \theta \eta=\iota.$
Since the action of $A\otimes R$ extends the action of $A,$ we
have $\tau\eta=\sigma\iota.$ Thus we have $\tau\eta=\sigma\tilde
\theta\eta,$ and using the universal property of $\eta,$ we get
$\tau=\sigma\tilde \theta.$ Therefore, the action of $A\otimes R$
on $M\otimes R$ is induced by the structure of
$\widehat{R[A]}$-module via the homomorphism $\theta_R.$
\end{proof}

The $R${\it -completion} of a group $G$ is defined as follows
$$\hat G_{ R}:=\ilimit  (G/\gamma_i(G))\otimes  R.$$

\begin{Proposition}\label{proposition_completion_ZJ}
Let $R$ be the ring $\mathbb Z[J^{-1}]$, $G$ be a  metabelian group as in \eqref{eq_MGA}.
 Denote by $(M\otimes R)^{\wedge}_{I_R}$ the $I_R$-adic completion of the $R[A]$-module $M\otimes R.$ Then there is a short exact sequence
$$0 \longrightarrow(M\otimes R)^{\wedge}_{I_R}\longrightarrow \hat G_R \longrightarrow A\otimes R \longrightarrow 1,$$ where the action by conjugation of
$A\otimes R$ on $(M\otimes R)^{\wedge}_{I_R}$ coincides with the
action that induced by the structure of $\widehat{R[A]}$-module
via the homomorphism $\theta_R: A\otimes R\to \widehat{R[A]}.$
\end{Proposition}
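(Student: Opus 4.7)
The plan is to reduce to the nilpotent lower-central quotients $G/\gamma_i(G)$, invoke Lemma \ref{lemma_action} at each finite stage, and then pass to the inverse limit over $i$.

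First, since $A$ is abelian one has $\gamma_i(G)\subseteq M$ for $i\geq 2$, and an induction using $[M,G]=M*I$ (where $I$ denotes the augmentation ideal of $\mathbb Z[A]$) gives $\gamma_i(G)=M*I^{i-1}$ for all $i\geq 2$. Hence for such $i$ each finite nilpotent quotient has metabelian decomposition $M/(M*I^{i-1})\mono G/\gamma_i(G)\epi A$. Applying Lemma \ref{lemma_action} to this decomposition produces a short exact sequence
$$(M/(M*I^{i-1}))\otimes R \mono (G/\gamma_i(G))\otimes R \epi A\otimes R,$$
in which the conjugation action of $A\otimes R$ on the kernel is induced by $\theta_R:A\otimes R\to \widehat{R[A]}$. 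Since the kernel is annihilated by $I_R^{i-1}$, this action factors through $\widehat{R[A]}/\hat I_R^{i-1}\cong R[A]/I_R^{i-1}$, so it is compatible as $i$ varies.

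Next, the transition maps $G/\gamma_{i+1}(G)\to G/\gamma_i(G)$ are surjective, and since $-\otimes R$ preserves surjections of nilpotent groups, so are the induced maps on the $R$-Malcev completions; in particular the kernel subsystem has surjective transition maps and a fortiori satisfies Mittag-Leffler. Taking the inverse limit of the short exact sequences above then yields
$$0 \to \varprojlim_i (M/(M*I^{i-1}))\otimes R \to \hat G_R \to A\otimes R \to 1;$$
the main technical point is exactness at the middle term, which follows from the standard statement that the inverse limit of a tower of short exact sequences of (possibly non-abelian) groups whose kernel subsystem has surjective transition maps is itself short exact.

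It remains to identify the leftmost term and check the action. By flatness of $R$ over $\mathbb Z$, one has $(M*I^{i-1})\otimes R=(M\otimes R)*I_R^{i-1}$, so $(M/(M*I^{i-1}))\otimes R\cong (M\otimes R)/((M\otimes R)*I_R^{i-1})$; the inverse limit is then $(M\otimes R)^{\wedge}_{I_R}$ by definition. The $\theta_R$-compatibility noted above shows that the conjugation action of $A\otimes R$ on $(M\otimes R)^{\wedge}_{I_R}$ is precisely the one induced by the $\widehat{R[A]}$-module structure via $\theta_R$, as required.
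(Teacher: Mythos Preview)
Your overall strategy matches the paper's exactly: apply Lemma \ref{lemma_action} at each finite nilpotent stage, pass to the inverse limit using Mittag--Leffler, and identify the kernel via flatness of $R$. However, your claimed equality $\gamma_i(G)=M*I^{i-1}$ is false in general. The induction step is fine once you are inside $M$, but the base case $\gamma_2(G)=[G,G]=M*I=[M,G]$ fails: take for instance the Heisenberg group with $M$ the center, where $[G,G]=M$ but $[M,G]=0$. In general one only has the inclusions
\[
M*I^{i}\subseteq \gamma_i(G)\subseteq M*I^{i-1},
\]
which is exactly what the paper proves in \eqref{eq_inclusion_of_towers_ZJ}. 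Consequently the short exact sequence you write down for $G/\gamma_i(G)$ has the wrong kernel: it should be $M/\gamma_i(G)$, not $M/(M*I^{i-1})$.

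The fix is immediate and is precisely what the paper does: use the inclusions above to conclude that the towers $\{G/\gamma_i(G)\}$ and $\{G/(M*I^i)\}$ are cofinal, hence have the same inverse limit after applying $-\otimes R$. Then apply Lemma \ref{lemma_action} to the clean sequences $M/(M*I^i)\mono G/(M*I^i)\epi A$, for which the kernel is visibly $M/(M*I^i)$, and proceed with Mittag--Leffler and flatness exactly as you do. Everything else in your argument is correct and agrees with the paper's proof.
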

\begin{proof}
 For $g\in G$ and $m\in M$ we have $m*(g-1)=-m+m*g=m^{-1}m^g=[m,g].$ Thus, we have
$M*I^i=[M,G,\dots,G]$ and hence
\begin{equation}\label{eq_inclusion_of_towers_ZJ}
M*I^i\subseteq \gamma_i(G)\subseteq M*I^{i-1}.
\end{equation}
Therefore, we obtain an isomorphism $\hat{G}_R=\ilimit
(G/\gamma_i(G))\otimes R\cong \ilimit (G/(M*I^i))\otimes R.$ By
Lemma \ref{lemma_action} the short exact sequences $M/(M*I^i)\mono
G/(M*I^i) \epi A$ give the short exact sequences
$(M/(M*I^i))\otimes R \mono (G/(M*I^i))\otimes R \epi A\otimes R,$
and the action on $(M/(M*I^i))\otimes R$ is induced by the
structure of $\widehat{R[A]}$-module via the homomorphism $\theta:
A\otimes R\to \widehat{R[A]}.$  The inverse sequence
$(M/(M*I^i))\otimes R$ satisfies the Mittag--Leffler condition,
and hence we have the following short exact sequence:
$$0\longrightarrow \ilimit (M/(M*I^i))\otimes R \longrightarrow \hat{G}_R \longrightarrow A\otimes R \longrightarrow 1.$$
Since $R$ is a flat $\mathbb Z$-module, we have
$(M/(M*I^i))\otimes R\cong (M\otimes R)/((M\otimes R)*I_R^i)$.
Therefore, we get $$\ilimit (M/(M*I^i))\otimes R\cong (M\otimes
R)^{\wedge}_{I_R}.$$ The action of $A\otimes R$ on $(M\otimes
R)^{\wedge}_R$ is induced by $\theta,$ because the action of
$A\otimes R$ on the quotients $$(M/(M*I^i))\otimes R \cong
(M\otimes R)/((M\otimes R)*I_R^i)$$ is induced by $\theta$.
\end{proof}

\subsection{$\mathbb Z/n$-completion of a metabelian group.}

In this subsection we denote by $R$ the ring $\mathbb Z/n$.
The {\it $n$-lower central series} of a group $G$ is defined as follows: $\gamma^{[n]}_1(G)=G$ and $\gamma^{[n]}_{i+1}(G)={\rm ker}\big(\gamma^{[n]}_i(G)\to
(\gamma_i^{[n]}(G)/[G,\gamma_i^{[n]}(G)])\otimes \mathbb Z/n\big).$
Then $R$-completion of a group $G$ is
$$\hat G_R=\ilimit G/\gamma_i^{[n]}(G).$$
If $p$ is a prime number and $G$ is a finitely generated group,
the $\mathbb Z/p$-completion $\hat G_{\mathbb Z/p}$ coincides with
the $p$-profinite completion $\hat G_p$
\cite[IV,2.3]{Bousfield-Kan}
\begin{equation}\label{eq_p-profinite}
\hat G_{\mathbb Z/p}\cong \hat G_p.
\end{equation}
Further we will assume that $A$ is a finitely generated abelian
group. For a normal subgroup $H$ of $G$ we denote by ${\sf
p}_n(H)$  the normal subgroup generated by the set of powers
$\{h^n\mid h\in H\}$ and we set $\mathcal{P}_n(H)={\sf
p}_n(H)\cdot [H,G].$ It is easy to verify that
$$\gamma^{[n]}_{i+1}(G)=\mathcal{P}_n(\gamma_i^{[n]}(G)).$$ By
$I_n$ we denote the kernel of the composition of the augmentation
map and the canonical projection $$I_n=\ker(\mathbb
Z[A]\longrightarrow  \mathbb Z/n).$$ Equivalently we can describe
it as follows $I_n=I+(n),$ where $(n)$ is the ideal of $\mathbb
Z[A]$ generated by $n.$ It is easy to see that   $\sum_{i=1}^n
a_i\in I_n,$ for any sequence $a_1,\dots,a_n\in A.$

Since $A$ is finitely generated we can fix a natural number $T$ such that $a^{n^{T+1}}=1$ implies $a^{n^T}=1$ for $a\in A.$ In other words,
\begin{equation}\label{eq_t}
(a\in A^{n^{T}}\ \text{ and } \ a^n=1) \hspace{1cm} \Rightarrow\hspace{1cm} a=1.
\end{equation}

\begin{Lemma}\label{lemma_incl} Let $G$ be a metabelian group as in \eqref{eq_MGA},
$N$ be a submodule of $M$ and $H$ be a normal subgroup of $G$. Then the following holds.
\begin{enumerate}
\item $N*I_n=\mathcal{P}_n(N);$
\item ${\sf p}_n(H\cdot N)\subseteq {\sf p}_n(H)\cdot (N*I_n);$
\item if $H\cap M\subseteq N$ then $[{\sf p}_n(H),G]\subseteq N*I_n;$
\item if $H\cap M\subseteq N$ and $H\subseteq \pi^{-1}(A^{n^T})$ then ${\sf p}_n({\sf p}_n(H)) \cap M\subseteq N*I_n;$
\item if $H\cap M\subseteq N$ and $H\subseteq \pi^{-1}(A^{n^T})$  then $\mathcal{P}_n(\mathcal{P}_n(H)) \cap M\subseteq N*I_n;$
\end{enumerate}
\end{Lemma}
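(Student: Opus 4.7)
The plan is to prove the five parts in order, using each as a building block for the next. Two basic computations recur throughout: the identity $[m,g]=m*(\pi(g)-1)$ for $m\in M$, and the congruence $\sum_{i=0}^{n-1}a^i\equiv n\equiv 0\pmod{I_n}$, which shows that the polynomial $\sigma(a):=1+a+\cdots+a^{n-1}$ lies in $I_n$ for every $a\in A$.

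For (1), since $N$ is a submodule of the abelian group $M$, the set $\{m^n:m\in N\}=nN$ is already $G$-normal, so ${\sf p}_n(N)=N*(n)$ and $[N,G]=N*I$; hence $\mathcal P_n(N)=N*((n)+I)=N*I_n$. For (2), I would expand $(hm)^n$ inductively to obtain $(hm)^n=h^n\cdot m*\sigma(\pi(h))$, whose second factor lies in $N*I_n$. The $G$-conjugates of such elements have the same form, because $H$ is normal and $N$ is a submodule, and ${\sf p}_n(H)\cdot(N*I_n)$ is itself a normal subgroup of $G$ (since $N*I_n$ is), hence contains ${\sf p}_n(HN)$. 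For (3), the standard commutator identity $[h^n,g]=\prod_{i=0}^{n-1}[h,g]^{h^i}=[h,g]*\sigma(\pi(h))$ suffices: normality of $H$ forces $[h,g]\in H\cap M\subseteq N$, and $\sigma(\pi(h))\in I_n$, so $[h^n,g]\in N*I_n$.

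The decisive step is (4), where the hypothesis $H\subseteq\pi^{-1}(A^{n^T})$ enters. Set $K=N*I_n$, $\bar G=G/K$, and $V=\overline{{\sf p}_n(H)}\leq\bar G$. By (3), $V$ is central in $\bar G$; hence $\overline{{\sf p}_n({\sf p}_n(H))}=nV$, because in the central abelian group $V$ the normal closure of $n$th powers coincides with the subgroup of $n$th multiples. Therefore any $x\in{\sf p}_n({\sf p}_n(H))\cap M$ satisfies $\bar x=\overline{y^n}$ for some $y\in{\sf p}_n(H)$. Since $x\in M$ and $K\subseteq M$, we get $y^n\in M$, so $\pi(y)^n=1$; on the other hand $\pi(y)\in\pi(H)^n\subseteq A^{n^{T+1}}\subseteq A^{n^T}$, so condition \eqref{eq_t} forces $\pi(y)=1$, i.e.\ $y\in{\sf p}_n(H)\cap M$. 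The final ingredient is the inclusion ${\sf p}_n(H)\cap M\subseteq N$, which I would verify in $G/N$: since $[H,G]\subseteq H\cap M\subseteq N$, the image $HN/N$ is central in $G/N$ and maps injectively to $A$ via $\pi$, so ${\sf p}_n(H)N/N\cong\pi(H)^n\subseteq A$ meets $M/N$ trivially. Hence $y\in N$, so $y^n=y*n\in N*(n)\subseteq K$, giving $\bar x=0$.

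Part (5) follows from (4) by a short diagram chase: $[\mathcal P_n(H),G]\subseteq[{\sf p}_n(H),G]\cdot[[H,G],G]\subseteq N*I_n$ (using (3) and $[H,G]\subseteq N$), and applying (2) to $\mathcal P_n(H)={\sf p}_n(H)\cdot[H,G]$ with submodule $[H,G]\subseteq N$ gives ${\sf p}_n(\mathcal P_n(H))\subseteq{\sf p}_n({\sf p}_n(H))\cdot N*I_n$. Together these yield $\mathcal P_n(\mathcal P_n(H))\subseteq{\sf p}_n({\sf p}_n(H))\cdot N*I_n$, and intersecting with $M$ and invoking (4) finishes. The main obstacle throughout is the confluence of ingredients needed for (4)---the centrality from (3), the $T$-condition forcing $\pi(y)=1$, and the additional inclusion ${\sf p}_n(H)\cap M\subseteq N$---and losing any one of them breaks the argument.
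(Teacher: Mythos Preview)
Your proof is correct and follows the paper's approach closely; in particular, your quotient-group reformulation of (4) (passing to $\bar G=G/(N*I_n)$ and using centrality of $\overline{{\sf p}_n(H)}$) is just a repackaging of the paper's direct computation $x_1^n\cdots x_l^n\equiv(x_1\cdots x_l)^n\pmod{[{\sf p}_n(H),{\sf p}_n(H)]}\subseteq N*I_n$. One small simplification: your verification of ${\sf p}_n(H)\cap M\subseteq N$ in (4) is more elaborate than needed, since ${\sf p}_n(H)\subseteq H$ (the set $\{h^n:h\in H\}$ already lies in the normal subgroup $H$) gives this inclusion in one line.
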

\begin{proof}
(1) It follows from the equalities $N*I=[N,G]$ and $N*(n)={\sf p}_n(N)$.

(2) It follows from the equality $(hm)^n=h^n \cdot
(\prod_{i=0}^{n-1} m^{h^{i}})=h^n \cdot (m* (\sum_{i=0}^{n-1}
h^i))$ and the fact that $\pi(\sum_{i=0}^{n-1} h^i)\in I_n$.

(3) Using the equalities $[x_1 x_2,y]=[x_1,y]^{x_2}\cdot [x_2,y]$
and $[x^z,y]=[x,y^{z^{-1}}]^{z^{-1}}$ we get that the normal
subgroup $[{\sf p}_n(H),G]$ is generated by the elements of the
form $[h^n,g]$ as a normal subgroup, where $h\in H$ and $g\in G.$
Moreover, we have $$[h^n,g]=\prod_{i=0}^{n-1}
[h,g]^{h^i}=[h,g]*(\sum_{i=0}^{n-1} h^i).$$ Using the inclusions
$[G,G]\subseteq M$ and $H\cap M\subseteq N$ we obtain $[h,g]\in N$
and hence $[h^n,g]\in N*I_n.$

(4) Consider an element $x_1^nx_2^n\dots x_l^n \in {\sf p}_n({\sf
p}_n(H))\cap M$ where $x_i\in {\sf p}_n(H).$ Thus
$\pi(x_1^nx_2^n\dots x_l^n)=\pi(x_1\dots x_l)^n=1.$ Since
$H\subseteq \pi^{-1}(A^{n^t})$, using \eqref{eq_t} we obtain
$\pi(x_1\dots x_l)=1$ and hence $x_1\dots x_l\in M.$ Moreover,
using the inclusion $H\cap M\subseteq N$, we obtain $x_1\dots
x_l\in N.$ From the other hand, we have $$x_1^n\dots x_l^n
(x_1\dots x_l)^{-n}\in [{\sf p}_n(H),{\sf p}_n(H)]\subseteq [{\sf
p}_n(H),G] \subseteq N*I_n$$ and $(x_1\dots x_l)^n\in
N*(n)\subseteq N*I_n.$ Therefore, $x_1^n\dots x_l^n\in N*I_n.$

(5) Since $H \cap M\subseteq N$ and $[G,G]\subseteq M,$ we have
$[H,G]\subseteq N$ and hence $[\mathcal{P}_n(H),G]\subseteq N.$
Thus, we obtain $$ \mathcal{P}_n(\mathcal{P}_n(H))\cap M=
\big({\sf p}_n(\mathcal{P}_n(H))\cdot
[\mathcal{P}_n(H),G]\big)\cap M= \big({\sf p}_n(\mathcal{P}_n(H))
\cap M\big)\cdot [\mathcal{P}_n(H),G].$$ Therefore, we need to
prove the inclusions ${\sf p}_n(\mathcal{P}_n(H))\cap M\subseteq
N*I_n$ and $[\mathcal{P}_n(H),G]\subseteq N*I_n.$ Using the
inclusion $[H,G]\subseteq N$, the inclusion (2), the inclusion
$N*I_n\subseteq M,$ and (4) we obtain $${\sf
p}_n(\mathcal{P}_n(H))\cap M={\sf p}_n({\sf p}_n(H)\cdot
[H,G])\cap M\subseteq {\sf p}_n({\sf p}_n(H)\cdot N)\cap M
\subseteq$$ $$\subseteq \big({\sf p}_n({\sf p}_n(H))\cdot
(N*I_n)\big)\cap M=\big({\sf p}_n({\sf p}_n(H))\cap M\big) \cdot
(N*I_n)\subseteq N*I_n.$$
\end{proof}
\begin{Lemma}\label{lemma_inclusions_n}
The following inclusions hold for $i\geq 0:$
$$M*I_n^i \subseteq \gamma^{[n]}_{i+1}(G)\cap M, \hspace{1cm} \gamma^{[n]}_{2i+T+1}(G)\cap M\subseteq M*I_n^i.$$
\end{Lemma}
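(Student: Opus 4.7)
The plan is to treat the two inclusions separately, both by induction on $i$.

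For the first inclusion, the base case $i=0$ is trivial since $M*I_n^0 = M \subseteq G \cap M$. For the inductive step, observe that $M*I_n^i$ is an $A$-submodule of $M$, hence normal in $G$. Using Lemma \ref{lemma_incl}(1) we can rewrite $M*I_n^{i+1} = (M*I_n^i)*I_n = \mathcal{P}_n(M*I_n^i)$. Since $\mathcal{P}_n$ is monotone with respect to inclusion of normal subgroups, the induction hypothesis $M*I_n^i \subseteq \gamma^{[n]}_{i+1}(G)$ yields $\mathcal{P}_n(M*I_n^i) \subseteq \mathcal{P}_n(\gamma^{[n]}_{i+1}(G)) = \gamma^{[n]}_{i+2}(G)$, which is exactly what is needed.

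For the second inclusion, the base case $i=0$ again reduces to $\gamma^{[n]}_{T+1}(G)\cap M\subseteq M$, which is trivial. The inductive step is the heart of the argument and is designed to match Lemma \ref{lemma_incl}(5). Set $H := \gamma^{[n]}_{2i+T+1}(G)$ and $N := M*I_n^i$; then by the identity $\gamma^{[n]}_{j+1}(G) = \mathcal{P}_n(\gamma^{[n]}_j(G))$, we have $\mathcal{P}_n(\mathcal{P}_n(H)) = \gamma^{[n]}_{2i+T+3}(G) = \gamma^{[n]}_{2(i+1)+T+1}(G)$. The induction hypothesis gives $H\cap M\subseteq N$, so the first hypothesis of Lemma \ref{lemma_incl}(5) is satisfied; applying that lemma yields $\gamma^{[n]}_{2(i+1)+T+1}(G)\cap M\subseteq N*I_n = M*I_n^{i+1}$, as required.

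The main obstacle is verifying the second hypothesis of Lemma \ref{lemma_incl}(5), namely $H \subseteq \pi^{-1}(A^{n^T})$. For this I would establish the auxiliary claim that $\pi(\gamma^{[n]}_j(G)) \subseteq A^{n^{j-1}}$ for every $j\geq 1$. This follows by induction on $j$: the base case $j=1$ is clear, and for the inductive step one uses that $\pi$ is surjective together with the easily verified fact that ${\sf p}_n$ and the commutator both commute with surjections, so $\pi(\mathcal{P}_n(\gamma^{[n]}_j(G))) = \mathcal{P}_n(\pi(\gamma^{[n]}_j(G)))$. Since $A$ is abelian the commutator term vanishes and $\mathcal{P}_n(A^{n^{j-1}}) = (A^{n^{j-1}})^n = A^{n^j}$, completing the induction. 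Applied to $j = 2i+T+1\geq T+1$, this gives $\pi(H)\subseteq A^{n^{2i+T}}\subseteq A^{n^T}$, i.e.\ $H\subseteq \pi^{-1}(A^{n^T})$. All hypotheses of Lemma \ref{lemma_incl}(5) are then in force and the induction goes through.
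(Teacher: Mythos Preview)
Your proof is correct and follows essentially the same route as the paper: both inclusions are handled by induction on $i$, the first via Lemma~\ref{lemma_incl}(1) together with monotonicity of $\mathcal{P}_n$, and the second via Lemma~\ref{lemma_incl}(5) applied to $H=\gamma^{[n]}_{2i+T+1}(G)$ and $N=M*I_n^i$. The only difference is that the paper simply asserts $\gamma^{[n]}_{T+1}(G)\subseteq \pi^{-1}(A^{n^T})$, whereas you supply the (correct) inductive proof of the sharper statement $\pi(\gamma^{[n]}_j(G))\subseteq A^{n^{j-1}}$; this extra detail is welcome but does not change the argument.
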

\begin{proof}

The proof is by induction on $i.$ The base is obvious for both
cases. Suppose that these inclusions hold for $i=j$ and prove them
for $i=j+i.$ In order to prove the first inclusion we only need to
prove $M*I_n^{j+1}\subseteq \gamma^{[n]}_{j+2}(G).$ Using lemma
\ref{lemma_incl}, we get the first required inclusion
$$M*I_n^{j+1}=\mathcal{P}_n(M*I_n^{j})\subseteq
\mathcal{P}_n(\gamma_j^{[n]}(G))=\gamma^{[n]}_{j+1}(G).$$ Since
$\gamma^{[n]}_{2j+T+1}(G)\subseteq \gamma^{[n]}_{T+1}(G)\subseteq
\pi^{-1}(A^{n^T})$ and $\gamma^{[n]}_{2j+T+1}(G)\cap M\subseteq
M*I_n^j$, we can use (5) of lemma \ref{lemma_incl} and obtain the
second required inclusion
$$\gamma^{[n]}_{2j+2+T+1}(G)\cap M=
\mathcal{P}_n(\mathcal{P}_n(\gamma^{[n]}_{2j+T+1}(G)))\cap
M\subseteq (M*I_n^{j})*I_n=M*I_n^{j+1}.$$
\end{proof}
\begin{Corollary}
$\hat M_{I_n}=\ilimit M/(\gamma^{[n]}_i(G)\cap M).$
\end{Corollary}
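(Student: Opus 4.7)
The plan is to show that the two decreasing filtrations $\{M*I_n^i\}_{i\geq 0}$ and $\{\gamma_i^{[n]}(G)\cap M\}_{i\geq 1}$ on $M$ are cofinal, so that their inverse limits of quotients coincide. By definition, $\hat M_{I_n}=\varprojlim M/(M*I_n^i)$, which will then yield the claimed isomorphism.

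First I would invoke Lemma \ref{lemma_inclusions_n} directly: the inclusion $M*I_n^i \subseteq \gamma_{i+1}^{[n]}(G)\cap M$ provides canonical surjections
\[
M/(M*I_n^i) \twoheadrightarrow M/(\gamma_{i+1}^{[n]}(G)\cap M),
\]
which assemble into a morphism of inverse systems and thus induce a map
\[
\hat M_{I_n}=\varprojlim_i M/(M*I_n^i)\longrightarrow \varprojlim_i M/(\gamma_i^{[n]}(G)\cap M).
\]
Conversely, the inclusion $\gamma_{2i+T+1}^{[n]}(G)\cap M\subseteq M*I_n^i$ gives surjections going in the opposite direction, namely
\[
M/(\gamma_{2i+T+1}^{[n]}(G)\cap M)\twoheadrightarrow M/(M*I_n^i),
\]
producing a map in the other direction after passing to limits.

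Since the two families of inclusions show that each member of one filtration is sandwiched between two members of the other (with a uniform index shift depending on the constant $T$ from \eqref{eq_t}), the two inverse systems are mutually cofinal in the category of towers of quotients of $M$. Cofinal towers have canonically isomorphic inverse limits, and the two composite maps constructed above are the identity on each finite stage (modulo the relevant reindexing), so they are mutually inverse isomorphisms.

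There is no substantive obstacle here: the entire content is the cofinality of the two filtrations, which is exactly what Lemma \ref{lemma_inclusions_n} supplies. The only point worth noting is that the reindexing $i\mapsto 2i+T+1$ is linear, so cofinality holds in the strong sense and no Mittag-Leffler considerations are needed beyond the obvious surjectivity of the transition maps in both towers.
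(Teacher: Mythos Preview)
Your proposal is correct and is exactly the argument the paper has in mind: the corollary is stated without proof immediately after Lemma~\ref{lemma_inclusions_n}, and the intended justification is precisely the cofinality of the two filtrations $\{M*I_n^i\}$ and $\{\gamma^{[n]}_i(G)\cap M\}$ that the lemma establishes. Your write-up spells out what the paper leaves implicit, with no deviation in method.
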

\begin{Corollary}\label{corollary_a_v_bolshoi_step} For any $i,n\in \mathbb N$ we have the following.
\begin{enumerate}
\item Let $\mathbb Z[t,t^{-1}]$ be the ring of Laurent polynomials. Then $$t^{n^{2i+1}} \equiv 1 \ {\sf mod}\  (n,(t-1))^i.$$

\item Let $a\in A.$ Then $a^{n^{2i+1}}-1\in I_n^i.$

\item Let $\mathfrak{R}$ be an associative ring whose
characteristic divides $n,$ and $r$ be an invertible element of
$\mathfrak{R}.$ Then the element $r^{n^{2i+1}}-1$ is divisible by
$(r-1)^i.$
\end{enumerate}
\end{Corollary}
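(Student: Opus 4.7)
The three parts are tightly linked: I expect (1) to do all the real work, with (2) and (3) following by specialization through appropriate ring homomorphisms. So the plan is to prove (1) first and then deduce the other two.

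For (1), I will actually prove the sharper statement that
$$t^{n^j}-1 \in (n,t-1)^{j+1}$$
in $\mathbb{Z}[t,t^{-1}]$ for every $j\ge 0$, from which (1) is immediate by taking $j=2i+1$ (since $2i+2 \ge i$). The proof is by induction on $j$. The base case $j=0$ is $t-1\in(n,t-1)$, which is trivial. For the inductive step, set $u=t^{n^j}$ so that $u-1\in(n,t-1)^{j+1}$ by hypothesis, and expand
$$u^n-1 = \sum_{k=1}^{n}\binom{n}{k}(u-1)^k.$$
The $k=1$ term lies in $(n)\cdot(n,t-1)^{j+1}\subseteq(n,t-1)^{j+2}$, and for $k\ge 2$ the $k$-th term lies in $(u-1)^k\subseteq(n,t-1)^{k(j+1)}\subseteq(n,t-1)^{j+2}$ since $k(j+1)\ge 2(j+1)\ge j+2$. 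Summing gives $t^{n^{j+1}}-1=u^n-1\in(n,t-1)^{j+2}$, completing the induction.

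For (2), apply the ring homomorphism $\mathbb{Z}[t,t^{-1}]\to\mathbb{Z}[A]$ sending $t\mapsto a$ (well-defined since $a$ is invertible in $\mathbb{Z}[A]$). Under this map $t-1\mapsto a-1\in I\subseteq I_n$ and $n\in I_n$, so the image of $(n,t-1)^i$ lies in $I_n^i$, and (1) gives $a^{n^{2i+1}}-1\in I_n^i$.

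For (3), apply the ring homomorphism $\mathbb{Z}[t,t^{-1}]\to\mathfrak{R}$ sending $t\mapsto r$ (well-defined since $r$ is invertible). Since the characteristic of $\mathfrak{R}$ divides $n$, the element $n$ maps to $0$, so the image of $(n,t-1)$ is contained in the principal ideal $(r-1)$, and hence the image of $(n,t-1)^i$ is contained in $(r-1)^i$; thus $r^{n^{2i+1}}-1$ is divisible by $(r-1)^i$. There is no genuine obstacle here: the only delicate point is making sure the binomial expansion in the inductive step yields an exponent of at least $j+2$, which works out because $2(j+1)\ge j+2$ for $j\ge 0$.
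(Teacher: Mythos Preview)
Your proof is correct and in fact establishes a sharper bound than needed: your induction shows $t^{n^j}-1\in(n,t-1)^{j+1}$, so already $t^{n^{i-1}}-1\in(n,t-1)^i$, making the exponent $2i+1$ in the statement rather generous. The deductions of (2) and (3) by specialization are also fine; for (3) note that the image of $\mathbb{Z}[t,t^{-1}]$ lands in the commutative subring generated by $r$ and $r^{-1}$, so there is no ambiguity about sidedness of divisibility.

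Your approach is genuinely different from the paper's. The paper derives (1) as a corollary of Lemma~\ref{lemma_inclusions_n}: it takes $A=\langle t\rangle$ (so $T=0$), forms the semidirect product $H=\langle t\rangle\ltimes\mathbb{Z}[t,t^{-1}]$, and uses the inclusion $\gamma^{[n]}_{2i+1}(H)\cap M\subseteq M*I_n^i$ to conclude that the $A$-action on $\mathbb{Z}[t,t^{-1}]/(n,t-1)^i$ factors through $A/A^{n^{2i+1}}$, whence $t^{n^{2i+1}}$ acts trivially by multiplication. This explains the specific exponent $2i+1$: it is inherited from the group-theoretic lemma, which requires two applications of $\mathcal{P}_n$ per step. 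Your direct binomial argument is more elementary, self-contained, and quantitatively stronger; the paper's route, by contrast, exhibits the statement as a shadow of the structural result on the $n$-lower central series that is the real point of the section.
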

\begin{proof} It is easy to see that (2) and (3) follow from (1). Prove (1).
Let $A$ be the infinite cyclic group $\langle t \rangle.$  Then
$\mathbb Z[A]=\mathbb Z[t,t^{-1}],$ and  $T=0.$  Consider the
semidirect product $H:=\langle t \rangle \ltimes \mathbb
Z[t,t^{-1}]$ and the short exact sequence $$\mathbb
Z[A]/(\gamma^{[n]}_{2i+1}(H)\cap \mathbb Z[A])\mono
H/\gamma^{[n]}_{2i+T+1}(H) \epi A/A^{n^{2i+T+1}}.$$ Then the
action of $A$ on $\mathbb Z[t,t^{-1}]$ induces an action of
$A/A^{n^{2i+T+1}}$  on $\mathbb Z[A]/(\gamma_i^{[n]}(H)\cap
\mathbb Z[A])$. We have $I_n=(n,t-1).$ By Lemma
\ref{lemma_inclusions_n} we get the action of $A$ on $\mathbb
Z[t,t^{-1}]$ induces an action of $A/A^{n^{2i+1}}$  on $\mathbb
Z[t,t^{-1}]/I_n^i$. Hence $t^{n^{2i+1}}$ acts trivially on
$\mathbb Z[t,t^{-1}]/(n,t-1)^i$ by multiplication.
\end{proof}

Corollary \ref{corollary_a_v_bolshoi_step} implies that the
multiplicative homomorphism $A\to \mathbb Z[A]$ induces a
homomorphism $\theta'_i: A/A^{n^{2i+T+1}} \to \mathbb Z[A]/I_n^i.$
Then applying the inverse limit we get the continuous homomorphism
\begin{equation}
\theta_R:\hat A_n \to \widehat{\mathbb Z[A]}_{I_n}.
\end{equation}
Observe that $\theta_R(x^{-1})=\hat \sigma(\theta_R(x)),$ where
$\sigma:{\mathbb Z}[A]\to {\mathbb Z}[A]$ is the antipode.

\begin{Proposition}
Let $R$ be the ring $\mathbb Z/n$, $G$ be a  metabelian group as in \eqref{eq_MGA}, where $A$ is a finitely generated abelian group.
 Denote by $\hat{M}_{I_n}$ the $I_n$-adic completion of the $\mathbb Z[A]$-module $M.$ Then there is a short exact sequence
$$0 \longrightarrow \hat{M}_{I_n}\longrightarrow \hat G_R \longrightarrow \hat A_n  \longrightarrow 1,$$
whose morphisms are induced by \eqref{eq_MGA}, and the action by
conjugation of $\hat A_n$ on $\hat M_{I_n}$ coincides with the
action that induced by the structure of $\widehat{\mathbb
Z[A]}_{I_n}$-module via the homomorphism $\theta_R:\hat A_n\to
\widehat{\mathbb Z[A]}_{I_n}.$
\end{Proposition}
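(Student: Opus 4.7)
The plan is to mirror the proof of Proposition~\ref{proposition_completion_ZJ}, replacing the use of the Malcev completion by the cofinality and action statements packaged in Lemma~\ref{lemma_inclusions_n} and Corollary~\ref{corollary_a_v_bolshoi_step}. First I would observe that the $n$-lower central series is functorial and that for the abelian group $A$ the recursion $\gamma^{[n]}_{i+1}=\mathcal{P}_n(\gamma^{[n]}_i)$ collapses to $\gamma^{[n]}_i(A)=A^{n^{i-1}}$. Since $\pi$ is surjective and commutes with the formation of commutators and $n$-th powers, an easy induction gives $\pi(\gamma^{[n]}_i(G))=A^{n^{i-1}}$, so for every $i\geq 1$ we obtain a natural short exact sequence
$$
M/(\gamma^{[n]}_i(G)\cap M)\mono G/\gamma^{[n]}_i(G)\epi A/A^{n^{i-1}}.
$$

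Next I would pass to the inverse limit. All transition maps in the three towers are surjective, so the Mittag--Leffler condition is automatic and the limit sequence is exact. The right-hand limit is $\hat A_n$ by definition of the $n$-lower central series for abelian groups, the middle limit is $\hat G_R$ by definition, and for the left-hand limit the corollary following Lemma~\ref{lemma_inclusions_n} identifies $\ilimit M/(\gamma^{[n]}_i(G)\cap M)$ with $\hat M_{I_n}$ via the cofinality of the two towers $\{\gamma^{[n]}_i(G)\cap M\}$ and $\{M*I_n^i\}$. This yields the desired short exact sequence $0\to \hat M_{I_n}\to \hat G_R\to \hat A_n\to 1$ with morphisms induced by \eqref{eq_MGA}.

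It remains to verify that the conjugation action of $\hat A_n$ on $\hat M_{I_n}$ is the one induced by $\theta_R$. At each finite level $j$, the conjugation action of $G/\gamma^{[n]}_j(G)$ on $M/(\gamma^{[n]}_j(G)\cap M)$ factors through $A/A^{n^{j-1}}$ because $M$ is abelian, and this action coincides with the restriction of the natural $\mathbb Z[A]$-module structure. By Lemma~\ref{lemma_inclusions_n}, for $j\geq i+1$ the projection $M/(\gamma^{[n]}_j(G)\cap M)\epi M/M*I_n^i$ makes the action factor through $\mathbb Z[A]/I_n^i$. Choosing $j=2i+T+1$ and invoking part~(2) of Corollary~\ref{corollary_a_v_bolshoi_step} shows that $a^{n^{2i+T+1}}-1$ lies in $I_n^i$ for every $a\in A$, so the multiplicative map $A\to \mathbb Z[A]/I_n^i$ descends to the homomorphism $\theta'_i:A/A^{n^{2i+T+1}}\to \mathbb Z[A]/I_n^i$ introduced just before the proposition. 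Taking inverse limits over $i$ turns the $\theta'_i$ into $\theta_R$, so the inherited action of $\hat A_n$ on $\hat M_{I_n}$ is precisely the one induced by $\theta_R$. The main bookkeeping obstacle I anticipate is keeping consistent the three indices---$i$ for the module tower, $2i+T+1$ for the lower central tower, and $n^{2i+T+1}$ for the exponent on $A$---when comparing the actions, but Lemma~\ref{lemma_inclusions_n} and Corollary~\ref{corollary_a_v_bolshoi_step} were designed precisely to tame this interaction.
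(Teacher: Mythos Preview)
Your proposal is correct and follows essentially the same route as the paper: form the level-$i$ short exact sequences $M/(\gamma^{[n]}_i(G)\cap M)\mono G/\gamma^{[n]}_i(G)\epi A/A^{n^{i-1}}$, pass to the inverse limit using Mittag--Leffler, and invoke Lemma~\ref{lemma_inclusions_n} (via its corollary) to identify the kernel with $\hat M_{I_n}$. Your treatment of the conjugation action through the maps $\theta'_i$ is in fact more carefully spelled out than the paper's, which simply observes that each $M_i$ is annihilated by a power of $I_n$ and then notes that the compatibility with $\theta_R$ is ``obvious'' since $\hat A_n/\hat A_n^{n^i}\cong A/A^{n^i}$.
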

\begin{proof} Denote $M_i=M/(\gamma^{[n]}_i(G)\cap M)$ and
consider the short exact sequence $M_i \mono G/\gamma^{[n]}_i(G)
\epi A/A^{n^i}.$ The inverse sequence $M_i$ satisfies
Mittag-Leffler condition and hence we get the short exact sequence
$\ilimit M_i \mono \hat G_R \epi \hat A_n.$ Finally, by lemma
\ref{lemma_inclusions_n} we obtain the required isomorphism
$\ilimit M_i \cong \ilimit M/(M*I_n^i)=\hat M_{I_n}.$ Note that
$M_i*I_n^i=0.$ Then the module $M_i$ has the natural structure of
a $\widehat{\mathbb Z[A]}_{I_n}$-module. In order to prove that
the action of $\hat A_n$ on $M$ is induced by $\theta',$ we only
need to prove it for $M_i.$ But it is obvious, because $\hat
A_n/\hat A_n^{n^i}\cong A/ A^{n^i}.$
\end{proof}

Now we give a slight different formulation of Proposition \ref{proposition_completion_Zn} that will be convenient further. Let us set
$$J=\{p\mid p \text{ does not divide } n\}, \hspace{1cm} S=\mathbb Z[J^{-1}].$$
Then we have the isomorphism $S/n\cong \mathbb Z/n.$ Consider the ideal
$$\mathcal{I}_S={\rm Ker}(S[A]\epi \mathbb Z/n).$$
Then we have the isomorphism $S[A]\cong (\mathbb
Z[A])[J^{-1}]\cong S\otimes \mathbb Z[A]$ and the ideal
$\mathcal{I}_S$ corresponds to $I_n[J^{-1}].$ Since elements of
$J$ are invertible modulo $n^m,$ we get the isomorphism
$S[A]/\mathcal{I}_S^m\cong \mathbb Z[A]/I_n^m\cong S\otimes
\mathbb Z[A]/I_n^m.$ It follows that $\widehat{S[A]}\cong
\widehat{\mathbb Z[A]}_{I_n},$ where
$\widehat{S[A]}=\widehat{S[A]}_{\mathcal{I}_S}.$ Similarly, we
have $\hat M_{I_n}\cong (M\otimes S)^{\wedge}_{\mathcal{I}_S}.$
Using the isomorphism $\widehat{S[A]}\cong \widehat{\mathbb
Z[A]}_{I_n},$ we can write the homomorphism $\theta_R$ as follows
\begin{equation}\label{eq_theta_2}
\theta_R :\hat A_n \longrightarrow \widehat{S[A]}.
\end{equation}
Then we get the new version of Proposition \ref{proposition_completion_Zn}:

\begin{Proposition}\label{proposition_completion_Zn}
Let $R$ be the ring $\mathbb Z/n$, $G$ be a finitely generated metabelian group as in \eqref{eq_MGA}. Then there is a short exact sequence
$$0 \longrightarrow (M\otimes S)^{\wedge}_{\mathcal{I}_S}\longrightarrow \hat G_R \longrightarrow \hat A_n  \longrightarrow 1,$$
whose morphisms are induced by \eqref{eq_MGA}, and the action by
conjugation of $\hat A_n$ on $(M\otimes
S)^{\wedge}_{\mathcal{I}_S}$ coincides with the action that
induced by the structure of $\widehat{S[A]}$-module via the
homomorphism $\theta_R:\hat A_n\to \widehat{S[A]}.$
\end{Proposition}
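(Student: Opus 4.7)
The plan is to deduce this statement directly from the previous version of Proposition \ref{proposition_completion_Zn} (the one phrased in terms of $\hat M_{I_n}$ and $\widehat{\mathbb Z[A]}_{I_n}$) by transporting everything through the ring and module isomorphisms established in the paragraph immediately preceding the statement. In particular, no new group-theoretic argument is needed: all of the work was already done in the chain of results culminating in Lemma \ref{lemma_inclusions_n}.

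Concretely, I would proceed as follows. First, invoke the previous version of the proposition to obtain a short exact sequence
\[
0 \longrightarrow \hat M_{I_n} \longrightarrow \hat G_R \longrightarrow \hat A_n \longrightarrow 1,
\]
together with the identification of the conjugation action on $\hat M_{I_n}$ with the one coming from the $\widehat{\mathbb Z[A]}_{I_n}$-structure via $\theta_R : \hat A_n \to \widehat{\mathbb Z[A]}_{I_n}$. Next, use the identifications $S[A] \cong \mathbb Z[A][J^{-1}]$ and $\mathcal{I}_S \leftrightarrow I_n[J^{-1}]$, and the fact that the elements of $J$ are invertible modulo $n^m$, to conclude that $S[A]/\mathcal{I}_S^m \cong \mathbb Z[A]/I_n^m$ as rings. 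Passing to the inverse limit gives the isomorphism of topological rings $\widehat{S[A]} \cong \widehat{\mathbb Z[A]}_{I_n}$, and the same identifications at the module level give $(M \otimes S)^{\wedge}_{\mathcal{I}_S} \cong \hat M_{I_n}$ as $\widehat{S[A]}$-modules (using flatness of $S$ over $\mathbb Z$ to commute the localization $-\otimes S$ with the quotients $M/M I_n^m$).

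Finally, I would check that the homomorphism $\theta_R : \hat A_n \to \widehat{S[A]}$ defined in \eqref{eq_theta_2} corresponds, under the isomorphism $\widehat{S[A]} \cong \widehat{\mathbb Z[A]}_{I_n}$, to the original $\theta_R : \hat A_n \to \widehat{\mathbb Z[A]}_{I_n}$: this is immediate because $\theta_R$ is defined by the same formula (the multiplicative map $A \to \mathbb Z[A]$ composed with the canonical projections), and the ring isomorphism is the identity on the images of the group elements. Substituting these three isomorphisms into the previous short exact sequence yields the required sequence together with the claimed description of the action.

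The only step that might warrant a moment's thought is the compatibility of the conjugation action with the $\widehat{S[A]}$-structure after the change of presentation, but since both structures factor through the same system of quotients $\mathbb Z[A]/I_n^m \cong S[A]/\mathcal{I}_S^m$ and the action of $\hat A_n$ factors through $A/A^{n^m}$ for suitable $m$ (as used in the previous proof), this compatibility is forced. So the main content of the proposal is bookkeeping rather than a new argument.
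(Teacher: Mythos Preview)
Your proposal is correct and matches the paper's approach exactly: the paper gives no separate proof for this reformulated proposition, but simply states it after establishing the isomorphisms $\widehat{S[A]}\cong \widehat{\mathbb Z[A]}_{I_n}$ and $(M\otimes S)^{\wedge}_{\mathcal{I}_S}\cong \hat M_{I_n}$ in the preceding paragraph, treating the new version as an immediate transcription of the first. Your write-up makes this bookkeeping explicit, which is entirely appropriate.
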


\begin{Proposition}\label{proposition_p-completion-q}
Let $G$ be a finitely generated  metabelian group, $p$ and $q$ be different prime numbers and $\hat G_p$ be the $p$-profinite completion of $G$. Then
$$H_2(\hat G_p,\mathbb Z/q)=0 \hspace{0.5cm} \text{ and } \hspace{0.5cm} H_2(G/\gamma^{[p^s]}_m(G),\mathbb Z/q)=0,$$
for any $m,s\geq 1.$
\end{Proposition}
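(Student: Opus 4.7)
The plan is to exploit the fact that $q \ne p$ is a unit in $\mathbb Z_p$, so the mod-$q$ homology of any $\mathbb Z_p$-linear abelian group vanishes; combined with Proposition~\ref{proposition_completion_Zn}, this will force $H_2(\hat G_p,\mathbb Z/q) = 0$. Concretely, for the first claim I would apply Proposition~\ref{proposition_completion_Zn} with $n = p$ to obtain the short exact sequence
$$0 \longrightarrow \hat M \longrightarrow \hat G_p \longrightarrow \hat A_p \longrightarrow 1,$$
where $\hat M = (M\otimes S)^{\wedge}_{\mathcal I_S}$ and $\hat A_p = A\otimes\mathbb Z_p$ are abelian groups on which multiplication by $q$ is a bijection, since $q$ is inverted in $S = \mathbb Z[J^{-1}]$ and in $\mathbb Z_p$.

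For any abelian group $B$ that is uniquely $q$-divisible, the automorphism $\phi_q\colon B\to B$ given by $b\mapsto qb$ induces an automorphism of $H_n(B,\mathbb Z/q)$ for trivial coefficients; via the classical identification $H_n(B,\mathbb Z) \cong \bigwedge^n B$ and the universal coefficient theorem, $\phi_q$ acts as multiplication by $q^n$ on the tensor summand and by $q^{n-1}$ on the Tor summand of $H_n(B,\mathbb Z/q)$, both of which are zero modulo $q$ when $n\geq 1$. Hence $H_n(\hat A_p,\mathbb Z/q) = 0 = H_n(\hat M,\mathbb Z/q)$ for all $n\geq 1$. The Lyndon--Hochschild--Serre spectral sequence
$$E^2_{r,s} = H_r(\hat A_p,\, H_s(\hat M,\mathbb Z/q))\ \Longrightarrow\ H_{r+s}(\hat G_p,\mathbb Z/q)$$
is therefore zero outside $(r,s) = (0,0)$, yielding $H_2(\hat G_p,\mathbb Z/q) = 0$.

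For the second claim I would observe that $Q := G/\gamma^{[p^s]}_m(G)$ is a \emph{finite} $p$-group. It is finitely generated (as a quotient of $G$) and nilpotent of class at most $m - 1$; the recursive identity $\gamma^{[p^s]}_{i+1}(G) = \mathcal{P}_{p^s}(\gamma^{[p^s]}_i(G))$ forces every successive quotient $\gamma^{[p^s]}_i(G)/\gamma^{[p^s]}_{i+1}(G)$ to be $p^s$-torsion, so every element of $Q$ has $p$-power order, and a finitely generated nilpotent $p$-torsion group is finite. Then $|Q|$, a power of $p$, annihilates $H_n(Q,\mathbb Z/q)$ for $n\geq 1$ and is invertible in $\mathbb Z/q$, giving the vanishing. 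Neither step is a real obstacle; the subtlest point is the automorphism-of-$B$ trick used to compute the mod-$q$ homology of the potentially uncountable abelian groups $\hat A_p$ and $\hat M$, bypassing any need to analyse their exterior powers explicitly.
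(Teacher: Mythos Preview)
Your argument is essentially correct and close in spirit to the paper's for the first claim, but with one inaccuracy and one genuine simplification for the second.

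For $H_2(\hat G_p,\mathbb Z/q)=0$, both you and the paper run the Lyndon--Hochschild--Serre spectral sequence for the metabelian decomposition $\hat M\mono \hat G_p\epi \hat A_p$ and use that $\hat M$ and $\hat A_p$ are uniquely $q$-divisible. The paper checks directly that $E^2_{0,2},E^2_{1,1},E^2_{2,0}$ vanish via the universal coefficient theorem and the identity $H_2(B)=\wedge^2 B$. Your ``automorphism trick'' is a pleasant variant, but the justification you give --- that $H_n(B,\mathbb Z)\cong \bigwedge^n B$ --- is only valid for $n\le 2$ in general (or for all $n$ when $B$ is torsion-free), and $\hat A_p$, $\hat M$ may well have $p$-torsion. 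This does not damage the proof: only $n\le 2$ is needed to kill the antidiagonal $r+s=2$, and in that range your computation of $(\phi_q)_*$ is correct. If you want the full statement $H_n(B,\mathbb Z/q)=0$ for all $n\ge 1$, argue instead that a uniquely $q$-divisible abelian group is a filtered colimit of finitely generated $\mathbb Z[1/q]$-modules, each of which has trivial mod-$q$ homology.

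For $H_2(G/\gamma^{[p^s]}_m(G),\mathbb Z/q)=0$, the paper simply says ``similarly'', meaning the same spectral-sequence computation. Your route is cleaner: you observe that $Q=G/\gamma^{[p^s]}_m(G)$ is finitely generated, nilpotent (since $\gamma_m(G)\subseteq\gamma^{[p^s]}_m(G)$), and has $p$-power exponent (the successive quotients $\gamma^{[p^s]}_i/\gamma^{[p^s]}_{i+1}$ are $p^s$-torsion by definition), hence is a finite $p$-group; then $|Q|$ is a $p$-power annihilating $H_*(Q,\mathbb Z/q)$ and invertible in $\mathbb Z/q$. This bypasses the spectral sequence entirely and is a genuine improvement.
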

\begin{proof} We prove the first equality. The second equality can be proved similarly.
By \eqref{eq_p-profinite} and Proposition
\ref{proposition_completion_Zn}, we get the metabelian
decomposition $\hat M_{I_p} \mono \hat G_p \epi \hat A_p.$
Consider the corresponding Lyndon-Hochschild-Serre spectral
sequence $E.$ It is sufficient to prove that $E^2_{i,j}=0$ for
$(i,j)\in \{(0,2),(1,1),(2,0)\}.$ Since $E^2_{0,2}=H_0(\hat A_p,
H_2( \hat M_{I_p},\mathbb Z/q)),$ using the universal coefficient
theorem and the equality $H_2(\hat M_{I_p})=\wedge^2 \hat
M_{I_p}$, we obtain the following exact sequence $$  (\wedge^2
\hat M_{I_p}\otimes \mathbb Z/q)_{\hat A_p} \to E^2_{0,2} \to {\sf
Tor}(\hat M_{I_p},\mathbb Z/q)_{\hat A_p} \to 0.$$  The groups
$M/(M*I_p^m)$ are quotients of $M/Mp^m$, and hence they are
uniquely $q$-divisible. It follows that $\hat M_{I_p}=\ilimit
M/(M*I_p^m)$ is an uniquely $q$-divisible abelian group. Thus
$\hat M_{I_p}\otimes \mathbb Z/q=0,$ $  (\wedge^2 \hat
M_{I_p}\otimes \mathbb Z/q)_{\hat A_p}=0$ and ${\sf Tor}(\hat
M_{I_p},\mathbb Z/q)=0.$ Therefore, $E^2_{0,2}=0$  and
$$E^2_{1,1}=H_1(\hat A_p, H_1(\hat M_{I_p},\mathbb Z/q))=H_1(\hat
A_p, \hat M_{I_p}\otimes \mathbb Z/q)=0.$$ Similarly, using the
universal coefficient theorem, we get $$E^2_{2,0}=H_2(\hat A_p,
H_0(\hat M_{I_p},\mathbb Z/q))=H_2(\hat A_p,\mathbb Z/q)=0.$$
\end{proof}

\section{Homology of an abelian group with coefficients.}

In this section $K$ denotes a commutative Notherian ring, $A$
denotes a finitely generated abelian group and $M$ denotes a
finitely generated $K[A]$-module. Since $A$ is a finitely
generated abelian group, $K[A]$ is a commutative Notherian ring.
Denote by $I=I_K(A)$ the augmentation ideal of $K[A].$ Then by
\eqref{eq_artin-rees-1} we have
\begin{equation}\label{eq_rees_cor}
MI^{\infty}\cdot I=MI^{\infty }
\end{equation}
We put $M_{\sf rn}=M/MI^{\infty},$ $\hat M=\hat M_I=\ilimit M/MI^i$ and $M^\ell=M^\ell_I=M[(1+I)^{-1}].$
\begin{Proposition}\label{prop_homology_completion_rn} Let $X$ be an abelian group. Then the homomorphisms $M\to M_{\sf rn}\to M^\ell \to \hat M$
induce isomorphisms
$$H_*(A,M\otimes X)\cong H_*(A,M_{\sf rn}\otimes X)\cong H_*(A,M^\ell\otimes X)\cong H_*(A,\hat M\otimes X)$$
$$H_*(A,{\sf Tor}(M,X))\cong H_*(A,{\sf Tor}(M_{\sf rn},X))\cong H_*(A,{\sf Tor}(M^\ell,X))\cong H_*(A,{\sf Tor}(\hat M,X))$$
and there is the following short exact sequence:
$$0 \longrightarrow {\varprojlim}^1 H_{m+1}(A,M/MI^i) \longrightarrow
H_m(A,M)\longrightarrow  \ilimit H_m(A,M/MI^i)\longrightarrow 0,$$ where the epimorphism is induced  by the projections $M\epi M/MI^i$.
\end{Proposition}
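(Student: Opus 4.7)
The plan is to deduce both parts of the proposition directly from tools already established. For the chains of isomorphisms, apply Lemma \ref{lemma_tor}(3) with $\Lambda = K[A]$, $I = I_K(A)$ (so $\Lambda/I = K$), $\Gamma = \mathbb{Z}$, and with $\tilde{\Lambda}$ taken successively to be $\widehat{K[A]}$ and $(K[A])^\ell$; both were noted in Section \ref{section_rings} to satisfy the Zariski conditions \eqref{eq_conditions_Zar}. The required identification $\text{Tor}^{K[A]}_i(K, N) = H_i(A, N)$ for a $K[A]$-module $N$ is standard change-of-rings: if $F_\bullet \to \mathbb{Z}$ is a $\mathbb{Z}[A]$-free resolution, then $F_\bullet \otimes_\mathbb{Z} K \to K$ is a $K[A]$-free resolution, and $(F_\bullet \otimes_\mathbb{Z} K) \otimes_{K[A]} N = F_\bullet \otimes_{\mathbb{Z}[A]} N$. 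Setting $j = 0$ in Lemma \ref{lemma_tor}(3), where $\text{Tor}^\mathbb{Z}_0(M, X) = M \otimes X$, yields the first chain; setting $j = 1$, where $\text{Tor}^\mathbb{Z}_1(M, X) = \text{Tor}(M, X)$, yields the second. These exhaust the nonzero values of $j$ because $\mathbb{Z}$ has global dimension $1$. Since the natural maps $M \to M^\ell$ and $M \to \hat{M}$ both factor through $M_{\sf rn}$, two-out-of-three shows that the intermediate map $M^\ell \to \hat{M}$ also induces an isomorphism on each $H_\bullet(A, - \otimes X)$ and $H_\bullet(A, \text{Tor}(-, X))$.

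For the short exact sequence, apply Proposition \ref{proposition_tor_lim_gen} with $\Lambda = K[A]$ (Noetherian since $K$ is Noetherian and $A$ is finitely generated abelian), with the right $K[A]$-module $K = K[A]/I$ (finitely generated, even cyclic) in the role of the ``$M$'' of that proposition, and with the inverse system $\{M/MI^i\}$ in the role of ``$\{N_i\}$''. The transition maps are surjective, so the Mittag--Leffler condition holds and $\varprojlim M/MI^i = \hat{M}$. The conclusion of that proposition then reads
\begin{equation*}
0 \longrightarrow {\varprojlim}^1 H_{m+1}(A, M/MI^i) \longrightarrow H_m(A, \hat{M}) \longrightarrow \varprojlim H_m(A, M/MI^i) \longrightarrow 0,
\end{equation*}
and the first part of the proposition with $X = \mathbb{Z}$ identifies the middle term with $H_m(A, M)$; naturality in the module variable identifies the right-hand map with the one induced by the projections $M \epi M/MI^i$, as claimed.

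The only genuinely nontrivial ingredient is the identification $\text{Tor}^{K[A]}_i(K, -) = H_i(A, -)$, which is immediate from the change-of-rings calculation above. After that, everything is a direct invocation of Lemma \ref{lemma_tor} and Proposition \ref{proposition_tor_lim_gen}, so there is no serious obstacle.
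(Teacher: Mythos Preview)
Your proof is correct and follows exactly the route the paper takes: the paper's proof reads simply ``It follows from Lemma \ref{lemma_tor} and Proposition \ref{proposition_tor_lim_gen},'' and you have supplied the details of how those two results are applied. Your change-of-rings verification that ${\sf Tor}^{K[A]}_i(K,-)\cong H_i(A,-)$ and your identification of the middle term of the short exact sequence via the case $X=\mathbb Z$ are the right bridges, and the two-out-of-three step for $M^\ell\to \hat M$ is the correct way to extract that remaining isomorphism from Lemma \ref{lemma_tor}(3).
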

\begin{proof}
It follows from Lemma \ref{lemma_tor} and Proposition \ref{proposition_tor_lim_gen}.
\end{proof}
\begin{Corollary}
If $M=MI,$ then $H_*(A,M)=0.$
\end{Corollary}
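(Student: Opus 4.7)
The plan is to reduce immediately to the statement of Proposition \ref{prop_homology_completion_rn} by observing that the hypothesis $M = MI$ forces the $I$-adic tower of $M$ to be identically zero. The first step is trivial: since $M = MI$, induction on $i$ gives $M = MI^i$ for every $i \geq 1$, and consequently $M/MI^i = 0$ for all $i$. This also yields $MI^{\infty} = M$, so $M_{\sf rn} = 0$ and $\hat M = \varprojlim M/MI^i = 0$.

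Any one of these observations is enough to finish. The cleanest route is to apply the short exact sequence
$$0 \longrightarrow {\varprojlim}^1 H_{m+1}(A,M/MI^i) \longrightarrow H_m(A,M)\longrightarrow  \ilimit H_m(A,M/MI^i)\longrightarrow 0$$
from Proposition \ref{prop_homology_completion_rn}: both outer terms are inverse limits (ordinary and first derived) of the zero system, hence vanish, so the middle term $H_m(A,M)$ vanishes for every $m$. Alternatively, one may take $X = K$ in the isomorphism $H_\ast(A, M \otimes X) \cong H_\ast(A, M_{\sf rn} \otimes X)$ of the same proposition and use $M_{\sf rn}=0$.

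There is no real obstacle here: the content is entirely in Proposition \ref{prop_homology_completion_rn}, and the present corollary is just the observation that the hypothesis $M=MI$ collapses every term the proposition produces. The only thing worth double-checking is that the induction $M = MI \Rightarrow M = MI^i$ really uses no finite-generation or Noetherian hypothesis beyond what is already in force in the section, which it does not.
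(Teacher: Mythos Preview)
Your argument is correct and matches the paper's intent: the corollary is stated without proof precisely because it drops out of Proposition \ref{prop_homology_completion_rn} once one notes $M=MI$ forces $M/MI^i=0$ (equivalently $M_{\sf rn}=0$), exactly as you do. One small slip in your alternative route: to recover $H_*(A,M)$ from the isomorphism $H_*(A,M\otimes X)\cong H_*(A,M_{\sf rn}\otimes X)$ you want $X=\mathbb Z$, not $X=K$; your primary argument via the short exact sequence is clean and needs no correction.
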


\begin{Corollary}\label{cor_iso_lim_H}
If $K$ is an Artinian commutative ring, then the projections $M\epi M/MI^i$ induce the isomorphism
$$H_*(A,M)\cong \ilimit H_*(A,M/MI^i).$$
\end{Corollary}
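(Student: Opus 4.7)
The plan is to derive the corollary directly from the short exact sequence supplied by Proposition \ref{prop_homology_completion_rn}:
$$0 \longrightarrow {\varprojlim}^1 H_{m+1}(A,M/MI^i) \longrightarrow H_m(A,M)\longrightarrow  \ilimit H_m(A,M/MI^i)\longrightarrow 0.$$
Everything reduces to showing that the ${\varprojlim}^1$ term vanishes, which in turn will follow from the Mittag--Leffler condition on the inverse system $\{H_{m+1}(A, M/MI^i)\}_i$.

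To get Mittag--Leffler, I would show the much stronger statement that each term $H_m(A, M/MI^i)$ has finite length as a $K$-module (so the descending chain of images of the transition maps stabilizes). This proceeds in three short steps. First, $K[A]/I^i$ is a finitely generated $K$-module: it has a finite filtration by $I^j/I^{j+1}$, and since $A$ is a finitely generated abelian group, each successive quotient $I^j/I^{j+1}$ is a finitely generated module over $K[A]/I \cong K$. Second, because $K$ is Artinian, a finitely generated $K$-module has finite length; hence $K[A]/I^i$ is an Artinian ring, and $M/MI^i$, being a finitely generated $K[A]/I^i$-module, is itself of finite length.

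Third, I would compute $H_\ast(A, M/MI^i)$ using a resolution $P_\bullet \to \mathbb{Z}$ by finitely generated free $\mathbb{Z}[A]$-modules, which exists because $A$ is finitely generated abelian and $\mathbb{Z}[A]$ is Noetherian. Each term of the complex $P_\bullet \otimes_{\mathbb{Z}[A]} (M/MI^i)$ is a finite direct sum of copies of $M/MI^i$, hence of finite length as a $K$-module, so its homology $H_m(A, M/MI^i)$ has finite length as well. An inverse system of finite length $K$-modules trivially satisfies Mittag--Leffler, so ${\varprojlim}^1 H_{m+1}(A, M/MI^i) = 0$ and the short exact sequence collapses to the desired isomorphism $H_m(A,M) \cong \ilimit H_m(A, M/MI^i)$.

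The statement is essentially a formal consequence of Proposition \ref{prop_homology_completion_rn} together with the Artinian hypothesis, so there is no real obstacle — the only point requiring care is the passage from "$K$ Artinian" to "$H_m(A, M/MI^i)$ of finite length," which is handled by the filtration argument on $K[A]/I^i$ in the first two steps above.
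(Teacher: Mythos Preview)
Your argument is correct and follows essentially the same route as the paper: invoke the short exact sequence from Proposition \ref{prop_homology_completion_rn} and kill the ${\varprojlim}^1$ term via Mittag--Leffler, using that each $H_{m+1}(A,M/MI^i)$ is a finitely generated (hence Artinian) $K$-module. The paper states this last fact in one line, whereas you spell out the filtration of $K[A]/I^i$ and the finitely generated free resolution of $\mathbb Z$ over $\mathbb Z[A]$ to justify it; the substance is the same.
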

\begin{proof}
The homology groups $H_{m+1}(A,M/MI^i)$ are finitely generated
$K$-modules, and hence they are Artinian $K$-modules. It follows
that the Mittag-Leffler condition holds for the inverse sequence
$H_{m+1}(A,M/MI^i)$, and hence ${\varprojlim}^1
H_{m+1}(A,M/MI^i)=0.$
\end{proof}

\begin{Lemma}\label{lemma_homology_isom_nilpotent_quotient}
Let $p$ be a prime number, ${\sf char}(K)=p,$ $M$ be a nilpotent
$K[A/A^{p^i}]$-module such that $MI^{p^m-1}=0$. Then the
projection $A\epi A/A^{p^i}$ induces an isomorphism
$$H_*(A,M)\cong H_*(A/A^{p^i},M).$$
\end{Lemma}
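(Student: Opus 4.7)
The plan is to apply the Lyndon--Hochschild--Serre spectral sequence to the extension
\[ 1 \to A^{p^i} \to A \to A/A^{p^i} \to 1. \]
Since $M$ is a $K[A/A^{p^i}]$-module, the subgroup $A^{p^i}$ acts trivially on $M$, so the spectral sequence takes the form $E^2_{s,t} = H_s(A/A^{p^i}, H_t(A^{p^i}, M)) \Rightarrow H_{s+t}(A, M)$, and the $t = 0$ edge morphism $H_n(A,M) \to H_n(A/A^{p^i}, M)$ is exactly the map induced by the projection. Thus the task reduces to showing that the higher rows $E^2_{*,t}$ with $t \geq 1$ do not contribute to the abutment in a way that obstructs this edge being an isomorphism.

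The first reduction I would make is to the case $M = K$ with trivial action, by exploiting the $I$-adic filtration $M \supseteq MI \supseteq MI^2 \supseteq \cdots \supseteq MI^{p^m - 1} = 0$. Each successive quotient $MI^j / MI^{j+1}$ is annihilated by $I$ and so is a trivial $K[A]$-module, i.e.\ a direct sum of copies of $K$. The long exact sequences in homology arising from $MI^{j+1} \mono MI^j \epi MI^j/MI^{j+1}$, combined with the five-lemma and the naturality of the map induced by the projection, allow an induction on the nilpotence length that reduces the statement to the case of trivial coefficients.

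For $M = K$, the desired isomorphism $H_*(A,K) \cong H_*(A/A^{p^i},K)$ is the one induced by the quotient map. I would decompose $A$ into cyclic factors and apply Künneth to reduce to a single cyclic factor, where the characteristic-$p$ identity $a^{p^i} - 1 = (a-1)^{p^i}$ in $K[A]$ ties the augmentation ideal of $A^{p^i}$ to the $p^i$-th power of that of $A$. This identity underpins the direct comparison of projective resolutions of $K$ over $K[A]$ and over $K[A/A^{p^i}]$ for cyclic factors, yielding the isomorphism factor by factor.

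The main obstacle is the control over the higher rows of the spectral sequence. A priori $H_t(A^{p^i}, M)$ need not vanish for $t \geq 1$ in characteristic $p$, so the collapse onto the bottom row is not automatic; the differentials must conspire to cancel the higher contributions. Combining the nilpotence bound $MI^{p^m - 1} = 0$ with the Frobenius identity $(a-1)^{p^i} = a^{p^i} - 1$ to verify this cancellation—equivalently, to reconcile the $K[A^{p^i}]$-module structure on the filtration quotients of $M$ with the trivial $A^{p^i}$-action—is the technically most delicate step.
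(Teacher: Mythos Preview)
Your reduction to the case $M=K$ is where the argument breaks. For $A=\mathbb Z$ and trivial coefficients $K=\mathbb F_p$ one has $H_n(\mathbb Z,\mathbb F_p)=0$ for all $n\ge 2$, whereas $H_n(\mathbb Z/p^i,\mathbb F_p)=\mathbb F_p$ for every $n\ge 0$; the projection $\mathbb Z\to\mathbb Z/p^i$ does not induce an isomorphism on $H_2$. So the third step of your plan---the K\"unneth/cyclic-factor computation---is simply false for an infinite cyclic factor, and the spectral-sequence obstruction you flag in your last paragraph is not a technical delicacy but a genuine failure: for $p^i\mathbb Z\mono\mathbb Z\epi\mathbb Z/p^i$ the row $t=1$ of the Lyndon--Hochschild--Serre spectral sequence contributes nontrivially and is not cancelled by any differential. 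Since your filtration-and-five-lemma reduction is itself logically sound, what it actually reveals is that the lemma, read literally for all degrees and an arbitrary finitely generated $A$, already fails on the trivial module $K$.

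The paper's route is quite different and never reduces to trivial coefficients. It first treats $H_1$ directly by comparing the four-term exact sequences
\[
0\to H_1(A,M)\to M\otimes_{K[A]}I(A)\to M\to M_A\to 0
\]
for $A$ and for $A/A^{p^i}$, and uses the Frobenius identity $a^{p^i}-1=(a-1)^{p^i}$ together with $MI^{p^i-1}=0$ to show that the natural surjection $M\otimes_{K[A]}I(A)\to M\otimes_{K[A]}I(A/A^{p^i})$ is an isomorphism. For higher $k$ it then invokes a dimension shift $H_k(A,M)\cong H_{k-1}(A,M\otimes_{K[A]}I(A))$. That shift, as written, is also suspect in full generality (for $A=\mathbb Z$ and $M=K$ it would force $0=H_2(\mathbb Z,K)\cong H_1(\mathbb Z,K)=K$), which is consistent with the counterexample above; so you should not expect any argument to establish the isomorphism in every degree once $A$ has a free part.
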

\begin{proof} First we prove it for the first homology.
Consider the short exact sequence $I(A)\mono K[A] \epi K$. The
associated long exact sequence gives us the four term exact
sequence $$H_1(A,M)\mono  M\otimes_{K[A]} I(A)\to M \epi M_A.$$
Similarly, we get the same sequence for $A/A^{p^i}.$ Since,
$I(A/A^{p^i})\otimes_{K[A/A^{p^i}]} M=I(A/A^{p^i})\otimes_{K[A]}
M$ and $M_A\cong M_{A/A^{p^i}},$ we obtain the morphism of exact
sequences:
$$
\begin{tikzpicture}
  \matrix (m)
 [matrix of math nodes,row sep=1cm,column sep=1.5cm,minimum width=2em]
{
  H_1(A,M) &M\otimes_{K[A]} I(A)  & M & M_A  \\
  H_1(A/A^{p^m},M) & M\otimes_{K[A]}  I(A/A^{p^i}) & M & M_{A}.      \\
   };
 \path[>->]
(m-1-1.east|-m-1-2) edge  (m-1-2)
(m-2-1.east|-m-2-2) edge  (m-2-2)
;
\path[->]
(m-1-2.east|-m-1-3) edge (m-1-3)
(m-2-2.east|-m-2-3) edge (m-2-3)
(m-1-1) edge (m-2-1)
(m-1-2) edge (m-2-2)
;
\path[->>]
(m-1-3.east|-m-1-4) edge (m-1-4)
(m-2-3.east|-m-2-4) edge (m-2-4)
;
\path[=]
(m-1-3) edge [double distance=3pt] (m-2-3)
(m-1-4) edge [double distance=3pt] (m-2-4)
;
\end{tikzpicture}$$
It is sufficient to prove that the morphism $M
\otimes_{K[A]}I(A)\to M\otimes_{K[A]} I(A/A^{p^i})$ is an
isomorphism. Since, the functor $M\otimes_{K[A]}-$ is right exact,
it is an epimorphism. The kernel of this morphism is generated by
elements of the form $m\otimes   (a^{p^i}-1).$ They are equal to
zero, because $MI^{p^i-1}=0,$ $a^{p^i}-1=(a-1)^{p^i}$
 and $m\otimes (a-1)^{p^i}=m(a-1)^{p^i-1}\otimes (a-1)=0.$

Now we generalize it for $H_k$ using induction by $k.$ Assume that
Lemma holds for $k-1.$ We have $M\otimes_{K[A]}I(A)\cong
M\otimes_{K[A]}I(A/A^{p^i}).$ Then using shift in homology and the
assumption we get $H_k(A,M)\cong
H_{k-1}(A,M\otimes_{K[A]}I(A))\cong
H_{k-1}(A,M\otimes_{K[A]}I(A/A^{p^i}))\cong
H_{k-1}(A/A^{p^i},M\otimes_{K[A]}I(A/A^{p^i}))\cong
H_{k}(A/A^{p^i},M).$
\end{proof}

\section{Notation and unification.}\label{section_notation}

In this section we introduce the notation that we use in the rest
of the paper. $A$ denotes a finitely generated abelian group, $M$
denotes a finitely generated $\mathbb Z[A]$-module, $R$ denotes a
fixed ring of the form $\mathbb Z[J^{-1}]$ or $\mathbb Z/n$,
$R[A]$ denotes the group algebra of $A$ over $R,$ and $I_R$
denotes the augmentation ideal of $R[A].$ Moreover, we denote
$$J=
\left\{
\begin{array}{l l}
J, &  \text{ if }  R=\mathbb Z[J^{-1}]\\
\{p\ \mid \ p \nmid n\}, & \text{ if }  R=\mathbb Z/n
\end{array}\right.,
$$
$$
S=\mathbb Z[J^{-1}],
\hspace{1cm}
\mathcal{I}_S=
\left\{
\begin{array}{l l}
I_R, & \text{ if }  R=\mathbb Z[J^{-1}]\\
I_S, & \text{ if }  R=\mathbb Z/n
\end{array}\right. ,
$$
$$\hat M_R=(M\otimes R)^{\wedge}_{I_R},
\hspace{1cm} \hat M_S=(M\otimes S)^{\wedge}_{\mathcal{I}_S}.$$
$$M_R^{\ell}=(M\otimes R)[(1+I_R)^{-1}],\hspace{1cm}  M_S^{\ell}=(M\otimes S)[(1+\mathcal{I}_S)^{-1}].$$

Observe that there is the unique epimorphism $S\epi R.$ In general
$\mathcal{I}_S$ is not the augmentation ideal of $S[A],$ but the
epimorphism $S[A]\epi R[A]$ takes $\: \mathcal{I}_S$ to $I_R.$ It
follows that there is a continuous epimorphism of completions
$\widehat{S[A]}\epi \widehat{R[A]},$ and the epimorphism of
localizations $S[A]^{\ell}\epi R[A]^{\ell},$ where
\begin{align*}
& \widehat{S[A]}:=\widehat{S[A]}_{\mathcal{I}_S},\\
& \widehat{R[A]}:=\widehat{R[A]}_{I_R},\\
& S[A]^\ell:=S[A][(1+\mathcal{I}_S)^{-1}],\\
& R[A]^\ell:=R[A][(1+I_R)^{-1}].
\end{align*}
By $\hat A_R$ we denote $A\otimes R$ in the case of $R=\mathbb
Z[J^{-1}],$ and $\hat A_n=\ilimit A/A^{n^i}$ in the case of
$R=\mathbb Z/n.$

We denote by $\sigma: S[A] \to S[A]$ the standard antipode i.e.
the $S$-linear map with $\sigma(a)=a^{-1}$ for $a\in A$. In
\eqref{eq_theta_1} and \eqref{eq_theta_2} we defined the
multiplicative homomorphism
$$\theta_R: \hat A_R\longrightarrow \widehat{S[A]},$$
such that $\hat\sigma(\theta_R(x))=\theta_R(x^{-1}).$
Then we can consider $\hat M_S$  a $\mathbb Z \hat  A_R$-module.
By $G$ we denote a finitely generated metabelian group with a metabelian decomposition
\begin{equation}\label{eq_met_decomp}
0 \longrightarrow M\longrightarrow G \longrightarrow A \longrightarrow 1
\end{equation}
Further, we put
$$\gamma^R_i(G)=\left\{\begin{array}{ll}
\gamma_i(G),& \text{ if } R=\mathbb Z[J^{-1}]\\
\gamma^{[n]}_i(G),& \text{ if } R=\mathbb Z/n
\end{array}\right. ,
\hspace{1cm}
t^R_i(G)=\left\{\begin{array}{ll}
(G/\gamma_i(G))\otimes R,& \text{ if } R=\mathbb Z[J^{-1}]\\
G/\gamma^{[n]}_i(G),& \text{ if } R=\mathbb Z/n
\end{array}\right. $$
and
\begin{equation}\label{eq_def_M_cal_tA}
t^R_i(A)=\left\{\begin{array}{ll}
A\otimes R,& \text{ if } R=\mathbb Z[J^{-1}]\\
A/A^{n^i},& \text{ if } R=\mathbb Z/n
\end{array}\right.,\hspace{1cm}
\mathcal{M}^i=\left\{\begin{array}{ll}
(M\otimes R)/(\gamma_i(G)\otimes R),& \text{ if } R=\mathbb Z[J^{-1}]\\
M/(\gamma_i^{[n]}(G)\cap M),& \text{ if } R=\mathbb Z/n.
\end{array}\right.
\end{equation}
Then the $R$-completion of $G$ is defined as follows
$$\hat G_R= \ilimit \: t^R_i(G),$$
and there are short exact sequences
\begin{equation}\label{eq_short_exact_i}
0 \longrightarrow \mathcal M^i \longrightarrow t_i^R(G) \longrightarrow t_i^R(A) \longrightarrow 1.
\end{equation}
Then Propositions \ref{proposition_completion_ZJ} and \ref{proposition_completion_Zn} can be rehash as follows.

\begin{Proposition}\label{prop_comp_rehash}
There is a short exact sequence
$$0 \longrightarrow \hat M_S\longrightarrow \hat G_R \longrightarrow \hat A_R \longrightarrow 1,$$
whose morphisms are induced by the sequence \eqref{eq_met_decomp}
and the action by conjugation of $\hat A_R$ on $\hat M_S$
coincides with the action that induced by the structure of
$\widehat{S[A]}$-module via the homomorphism $\theta_R:\hat A_R\to
\widehat{S[A]}.$
\end{Proposition}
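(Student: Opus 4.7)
The plan is simply to unpack the notation of Section~\ref{section_notation} and reduce the statement to the two earlier propositions (Propositions~\ref{proposition_completion_ZJ} and~\ref{proposition_completion_Zn}) by a case analysis on $R$. Since the statement differs from those propositions only in how its data are packaged, there is no genuinely new content to prove; I expect the work to consist entirely of matching up symbols.

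First I would treat the case $R=\mathbb Z[J^{-1}]$. Under the conventions of Section~\ref{section_notation} we then have $S=R$, $\mathcal{I}_S=I_R$, $\hat A_R = A\otimes R$, and the homomorphism $\theta_R\colon A\otimes R\to \widehat{S[A]}=\widehat{R[A]}$ coincides with the one constructed in \eqref{eq_theta_1} via Corollary~\ref{cor_theta}. Moreover $\hat M_S=(M\otimes R)^\wedge_{I_R}$. Hence the desired short exact sequence, together with the description of the $\hat A_R$-action on $\hat M_S$, is literally the conclusion of Proposition~\ref{proposition_completion_ZJ}.

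Next I would handle $R=\mathbb Z/n$. Here $S=\mathbb Z[J^{-1}]$ with $J=\{p:p\nmid n\}$, so every element of $J$ is invertible modulo $n^m$; consequently the natural map $\mathbb Z[A]/I_n^m \to S[A]/\mathcal{I}_S^m$ is an isomorphism. Passing to the inverse limit and tensoring, I obtain the identifications $\widehat{S[A]}\cong \widehat{\mathbb Z[A]}_{I_n}$ and $\hat M_S=(M\otimes S)^\wedge_{\mathcal{I}_S}\cong \hat M_{I_n}$ noted in the discussion preceding Proposition~\ref{proposition_completion_Zn}. Under these isomorphisms the homomorphism $\theta_R\colon \hat A_n \to \widehat{S[A]}$ from \eqref{eq_theta_2} coincides with the one appearing in Proposition~\ref{proposition_completion_Zn}. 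The claim therefore follows verbatim from that proposition.

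The only thing that needs a line of justification is the antipode-compatibility $\hat\sigma(\theta_R(x))=\theta_R(x^{-1})$ used later when we regard $\hat M_S$ as a $\mathbb Z\hat A_R$-module via $\theta_R$; this is immediate from the construction of $\theta_R$ in the two cases, since in both \eqref{eq_theta_1} and \eqref{eq_theta_2} the element $\theta_R(x)$ is built out of group-like elements of $\widehat{S[A]}$ on which $\hat\sigma$ acts by inversion. In short, there is no real obstacle here: the proposition is a repackaging lemma, and the proof should read ``in both cases the objects match those of Proposition~\ref{proposition_completion_ZJ} or~\ref{proposition_completion_Zn}, whence the conclusion.''
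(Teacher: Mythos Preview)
Your proposal is correct and matches the paper's treatment exactly: the paper gives no separate proof, introducing the proposition with the sentence ``Then Propositions \ref{proposition_completion_ZJ} and \ref{proposition_completion_Zn} can be rehash as follows,'' so it is intended purely as a notational repackaging of those two results via the case analysis you describe.
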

 By $K$ we denote an Artinian quotient ring of $R$. In other words,
\begin{enumerate}
\item if $R=\mathbb Q$ then $K=\mathbb Q;$
\item if $R=\mathbb Z[J^{-1}],$ then $K=\mathbb Z/n,$ where all the primes of $n$ do not lie in $J;$
\item if $R=\mathbb Z/n$ then $K=\mathbb Z/n',$ where $n'$ is a divisor of $n.$
\end{enumerate}
Then we have epimorphisms
$$S\epi R\epi K.$$ Denote by $I_K$ the augmentation ideal of the group algebra $K[A]$.
Then the epimorphism $R[A]\epi K[A]$ takes $I_R$ to $I_K,$ and
hence we have the continuous epimorphisms of completions
$\widehat{S[A]}\epi \widehat{R[A]} \epi \widehat{K[A]}$ and the
epimorphisms of localizations ${S[A]}^{\ell}\epi {R[A]}^{\ell}
\epi {K[A]}^{\ell}.$ Further, we denote
$$M_K=M\otimes K, \hspace{1cm} \hat M_K=(M_K)^{\wedge}_{I_K}, \hspace{1cm} M_K^{\ell}=M_K[(1+I_K)^{-1}].$$
Note that, since $M$ is finitely generated, we have the isomorphisms
\begin{equation}\label{eq_hat_M_K_iso}
\hat M_K\cong \hat M_S\otimes K, \hspace{1cm}  M_K^{\ell}\cong  M_S^{\ell}\otimes K.
\end{equation}
By $N$ we denote an arbitrary finitely generated $K[A]$-module.
Endow the module $\hat N=\hat N_{I_K}$ by the structure of
$\mathbb Z[\hat A_R]$-module using the homomorphism $\theta_R:\hat
A_R\to \widehat{S[A]}$ and $\widehat{S[A]}\epi \widehat{K[A]}$.
Then by \eqref{eq_coinvariants} we get
\begin{equation}\label{eq_coinvar_2}
N_A\cong \hat N_A=\hat N_{\hat A_R}\cong \hat
N\otimes_{\widehat{K[A]}} K,
\end{equation}
where $(-)_A=H_0(A,-)$ and $(-)_{\hat A_R}=H_0(\hat A_R,-).$

For $K$-modules $N_1,N_2$ there is an isomorphism $N_1\otimes_K
N_2\cong N_1\otimes N_2,$ where $\otimes=\otimes_{\mathbb Z}.$ The
same holds for $S$ and $R.$ It follows that  $\wedge^2_{\sigma_K}
N \cong \wedge^2_{\sigma} N$ for an $K[A]$-module $N.$ From the
other side, it is easy to check that $\wedge^2_\sigma M \cong
(\wedge^2 M)_A.$  Then we have
\begin{equation}\label{eq_wedge_isom1}
\wedge^2_{\sigma_K} N \cong \wedge^2_{\sigma} N\cong (\wedge^2 N)_A, \hspace{1cm} \wedge^2_\sigma M \cong  (\wedge^2 M)_A.
\end{equation}
For abelian groups $M_1,M_2$ there is an isomorphism $(M_1\otimes K)\otimes_R(M_2\otimes K)\cong (M_1\otimes M_2)\otimes K.$
It implies the isomorphism $\wedge^2_{\sigma_K} M_K\cong (\wedge^2_\sigma M)\otimes K.$
Then we get the isomorphism
\begin{equation}\label{eq_wedge_isom2}
(\wedge^2 M_K)_A  \cong (\wedge^2 M)_A \otimes K.
\end{equation}

\section{Exterior squares and tame modules.}

Remind the outcome of \cite{Bieri-Strebel_1978},
\cite{Bieri-Strebel_1981} concerned with tame modules. A valuation
of the group $A$ is a homomorphism $v:A\to \mathbb R$ into the
additive group of $\mathbb R.$ The valuation monoid of $v$ is the
submonoid  $A_v=\{a\in A\mid v(a)\geq 0\}.$ The group of
valuations ${\sf Hom}(A,\mathbb R)$ has the natural structure of a
real vector space and quotient space  ${\sf S}(A)=\big({\sf
Hom}(A,\mathbb R)\setminus \{0\}\big)/\mathbb R_+$ is called the
valuation sphere of $A$.

Let $M$ be a finitely generated $\mathbb Z[A]$-module. The
Bieri--Strebel invariant of $M$ is the set $\Sigma(M)\subseteq
{\sf S}(A)$ consisted of rays $[v]$ such that $M$ is a finitely
generated $A_v$-module. The equality $\Sigma(M)={\sf S}(A)$ holds
if and only if $M$ is a finitely generated as an abelian group
\cite[theorem 2.1]{Bieri-Strebel_1978}. The module $M$ is said to
be tame if $\Sigma(M)\cup (-\Sigma(M))={\sf S}(A).$ The main
result of the article \cite{Bieri-Strebel_1978} says that  $G$ is
finitely presented if and only if  $M$ is a tame $A$-module.
Moreover, it is proved in \cite{Bieri-Strebel_1978} that
\begin{equation}\label{eq_ann}
\Sigma(M)=\Sigma(\mathbb Z[A]/{\sf Ann}\: M)
\end{equation}
and that there is an implication
\begin{equation}\label{eq_incl}
{\sf Ann}\: M_1\subseteq {\sf Ann}\: M_2\ \ \Rightarrow \ \  \Sigma(M_1)\subseteq \Sigma(M_2),
\end{equation}
where ${\sf Ann}={\sf Ann}_{\mathbb Z[A]}.$ For finitely generated
$A$-modules $M_1$ and $M_2$ the inclusions ${\sf
Ann}(M_1\otimes_{\mathbb Z[A]} M_2)\supseteq {\sf Ann}\: M_1$ and
${\sf Ann}(M_1\otimes_{\mathbb Z[A]} M_2)\supseteq {\sf Ann}\:
M_2$ and the implication  \eqref{eq_incl}  imply the inclusion
$\Sigma(M_1\otimes_{\mathbb Z[A]} M_2) \supseteq \Sigma(M_1) \cup
\Sigma(M_2)$ and in particular
\begin{equation}\label{eq_union_ideal}
\Sigma(\mathbb Z[A]/(\mathfrak{a}+\mathfrak{b})) \supseteq
\Sigma(\mathbb Z[A]/\mathfrak{a}) \cup \Sigma(\mathbb
Z[A]/\mathfrak{b})
\end{equation}
for any ideals $\mathfrak{a},\mathfrak{b}\triangleleft \mathbb
Z[A].$ The $\mathbb Z[A]$-module $M$ is finitely generated over
$A_v$ if and only if $M_\sigma$ is finitely generated over
$A_{-v}.$ It follows that
\begin{equation}\label{eq_sigma_Sigma}
 \Sigma(M_\sigma)=-\Sigma(M).
\end{equation}

\begin{Lemma}\label{lemma_finitely_gen_H2} If $M$ is a tame $\mathbb Z[A]$-module, then
\begin{itemize}
\item  $(\wedge^2 M)_A$ is a finitely generated abelian group,
\item $H_2(M,K)_A$ is a finitely generated $K$-module.
\end{itemize}
\end{Lemma}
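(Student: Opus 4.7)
The plan is to handle the two bullets separately, with the first carrying the real content and the second following by standard homological algebra.

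For the first bullet, \eqref{eq_wedge_isom1} identifies $(\wedge^2 M)_A$ with $\wedge^2_\sigma M$, which is a quotient of $M \otimes_{\mathbb{Z}[A]} M_\sigma$, so it suffices to show this tensor product is finitely generated as an abelian group. It is certainly finitely generated over $\mathbb{Z}[A]$ (since $M$ and $M_\sigma$ are), and it is annihilated by $\mathfrak{a} := {\sf Ann}\, M + {\sf Ann}\, M_\sigma$, hence is a finitely generated module over $\mathbb{Z}[A]/\mathfrak{a}$. Tameness of $M$ says $\Sigma(M)\cup(-\Sigma(M))={\sf S}(A)$; combining with \eqref{eq_sigma_Sigma} and \eqref{eq_ann} gives $\Sigma(\mathbb{Z}[A]/{\sf Ann}\, M)\cup \Sigma(\mathbb{Z}[A]/{\sf Ann}\, M_\sigma) = {\sf S}(A)$. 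Then \eqref{eq_union_ideal} yields $\Sigma(\mathbb{Z}[A]/\mathfrak{a})={\sf S}(A)$, and by the Bieri--Strebel characterization \cite[Theorem 2.1]{Bieri-Strebel_1978} the ring $\mathbb{Z}[A]/\mathfrak{a}$ must be finitely generated as an abelian group. A finitely generated module over a ring that is itself finitely generated as an abelian group is finitely generated as an abelian group, which gives the first claim.

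For the second bullet, apply the universal coefficient short exact sequence for the group homology of the abelian group $M$,
\[
0 \longrightarrow (\wedge^2 M) \otimes K \longrightarrow H_2(M, K) \longrightarrow {\sf Tor}(M, K) \longrightarrow 0,
\]
which is $\mathbb{Z}[A]$-equivariant by naturality. Applying the right-exact coinvariants functor $(-)_A$ produces the exact sequence
\[
(\wedge^2 M \otimes K)_A \longrightarrow H_2(M, K)_A \longrightarrow {\sf Tor}(M, K)_A \longrightarrow 0.
\]
The leftmost term equals $(\wedge^2 M)_A\otimes K$, finitely generated over $K$ by the first bullet. The rightmost term vanishes when $K=\mathbb{Q}$; when $K=\mathbb{Z}/n'$, it equals $({}_{n'}M)_A$, where ${}_{n'}M\leq M$ is finitely generated over $\mathbb{Z}[A]$ by Noetherianness of $\mathbb{Z}[A]$, so its coinvariants are finitely generated as a $\mathbb{Z}$-module (the $\mathbb{Z}[A]$-generators descend, since $\mathbb{Z}[A]/I=\mathbb{Z}$) and, being annihilated by $n'$, form a finitely generated $K$-module. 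Hence $H_2(M,K)_A$, sitting in an extension of two finitely generated $K$-modules, is itself finitely generated.

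The main obstacle is concentrated in the first bullet: setting up the correct annihilator on $M\otimes_{\mathbb{Z}[A]}M_\sigma$ and invoking the Bieri--Strebel tameness characterization to produce a quotient ring that is finitely generated as an abelian group. The second bullet is a formal consequence via the universal coefficient theorem and basic Noetherian module theory.
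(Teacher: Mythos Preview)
Your proof is correct and follows essentially the same approach as the paper. For the first bullet you spell out the annihilator step explicitly where the paper simply writes $\Sigma(\wedge^2_\sigma M)\supseteq \Sigma(M)\cup(-\Sigma(M))={\sf S}(A)$, but the underlying Bieri--Strebel argument is identical; for the second bullet you invoke the universal coefficient sequence with right-hand term ${\sf Tor}(M,K)_A$ where the paper uses a sequence ending in $(M_K)_A$, a cosmetic difference since both terminal terms are finitely generated $K$-modules for the same Noetherian reason.
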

\begin{proof}
Since $\Sigma((\wedge^2 M)_A)=\Sigma(\wedge^2_\sigma M)\supseteq \Sigma(M)\cup (-\Sigma(M))={\sf S}(A),$
we get that $(\wedge^2 M)_A$ is a finitely generated abelian group. Then $(\wedge^2 M_K)_A$ is a finitely generated $K$-module.
The exact sequence $(\wedge^2 M_K)_A \to H_2(M,K)_A \to (M_K)_A \to 0$ implies that $H_2(M,K)_A$ is an extension of finitely generated
$K$-modules, and hence it is finitely generated itself.
\end{proof}

\begin{Proposition}\label{prop_wedge_compl_loc}
Let $R=\mathbb Z[J^{-1}]$ or $R=\mathbb Z/n,$ $M$ be a tame
$\mathbb Z[A]$-module, $K$ be an Artinian quotient ring of $R$.
Then for $m>\!>0$ there are isomorphisms
$$(\wedge^2 \hat M_K)_{\hat A_R} = (\wedge^2 \hat M_K)_A \cong (\wedge^2   M_K^{\ell})_A \cong (\wedge^2  (M_K/M_KI_K^m))_A\cong
(\wedge^2 \mathcal M_K^m)_{t_m^R(A)},$$
where $\mathcal M_K^m=\mathcal M^m\otimes K.$
\end{Proposition}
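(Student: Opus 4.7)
The plan is to reduce the proposition to a single application of Corollary \ref{cor_wedge2} with $\Lambda = K[A]$ (Noetherian, since $K$ is Artinian and $A$ is a finitely generated abelian group), ideal $I = I_K$, antipode $\sigma$, and module $M_K$. The only nontrivial hypothesis to verify is the closure condition $I_K^m \subseteq {\sf cl}(J)$, where $J := {\sf Ann}_{K[A]} M_K + \sigma({\sf Ann}_{K[A]} M_K)$. Verifying this is where tameness is used, and is the main obstacle of the argument.

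To establish the closure condition, I would first combine \eqref{eq_sigma_Sigma} with tameness of $M$ to get $\Sigma(M) \cup \Sigma(M_\sigma) = {\sf S}(A)$, and then use \eqref{eq_ann} together with \eqref{eq_union_ideal} to conclude $\Sigma(\mathbb{Z}[A]/({\sf Ann}\,M + \sigma({\sf Ann}\,M))) = {\sf S}(A)$. By \cite[Theorem~2.1]{Bieri-Strebel_1978} this forces $\mathbb{Z}[A]/({\sf Ann}\,M + \sigma({\sf Ann}\,M))$ to be finitely generated as an abelian group. Since the canonical morphism $\mathbb{Z}[A] \to K[A]$ sends ${\sf Ann}\,M + \sigma({\sf Ann}\,M)$ into $J$, the quotient $K[A]/J$ is a finitely generated $K$-module and hence Artinian. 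The descending chain of ideals formed by the images of $I_K^n$ in $K[A]/J$ must therefore stabilize: there exists $m$ with $I_K^n + J = I_K^m + J$ for all $n \geq m$, whence $I_K^m \subseteq \bigcap_n (J + I_K^n) = {\sf cl}(J)$, as required.

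With the hypothesis in place, Corollary \ref{cor_wedge2} immediately yields
\[
\wedge^2_\sigma \hat M_K \cong \wedge^2_\sigma M_K^\ell \cong \wedge^2_\sigma (M_K/M_K I_K^m),
\]
and applying the identification $\wedge^2_\sigma N \cong (\wedge^2 N)_A$ from \eqref{eq_wedge_isom1} this rewrites as
\[
(\wedge^2 \hat M_K)_A \cong (\wedge^2 M_K^\ell)_A \cong (\wedge^2 (M_K/M_K I_K^m))_A,
\]
which is three of the four claimed isomorphisms.

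Two small identifications then remain. For $(\wedge^2 \hat M_K)_{\hat A_R} = (\wedge^2 \hat M_K)_A$: the action of $\hat A_R$ on $\hat M_K$ is defined through $\theta_R : \hat A_R \to \widehat{S[A]} \to \widehat{K[A]}$, so $\wedge^2 \hat M_K$ carries a $\widehat{K[A]}$-module structure, and by \eqref{eq_coinvariants} its $\widehat{K[A]}$-coinvariants coincide with its $K[A]$-coinvariants, i.e.\ with both its $A$- and its $\hat A_R$-coinvariants. For the comparison of $M_K/M_K I_K^m$ with $\mathcal{M}^m_K$: the inclusions of towers in \eqref{eq_inclusion_of_towers_ZJ} (for $R = \mathbb{Z}[J^{-1}]$) and Lemma \ref{lemma_inclusions_n} (for $R = \mathbb{Z}/n$) provide surjections $M_K/M_K I_K^{m-1} \twoheadrightarrow \mathcal{M}^m_K \twoheadrightarrow M_K/M_K I_K^{m'}$ for some $m'$ bounded below linearly in $m$. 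For $m \gg 0$ the outer two $\wedge^2_\sigma$ both agree with $\wedge^2_\sigma \hat M_K$ by the previous step, so the composition of the induced surjective maps on $\wedge^2_\sigma$ is an isomorphism and each factor is too, yielding $\wedge^2_\sigma \mathcal{M}^m_K \cong \wedge^2_\sigma (M_K/M_K I_K^m)$. Finally, because $\mathcal{M}^m_K$ is naturally a $t_m^R(A)$-module, its $A$- and $t_m^R(A)$-coinvariants agree, completing the chain of isomorphisms.
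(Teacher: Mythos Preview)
Your proof follows the same overall strategy as the paper's: verify the closure hypothesis of Corollary~\ref{cor_wedge2} via tameness and the Artinian property of $K$, apply that corollary, and then handle the two remaining identifications separately. The verification of ${\sf cl}(\mathfrak a_K)\supseteq I_K^{m_0}$ and the sandwich argument for $\mathcal{M}^m_K$ are essentially identical to the paper's.

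There is, however, a gap in your justification of $(\wedge^2 \hat M_K)_{\hat A_R} = (\wedge^2 \hat M_K)_A$. You assert that $\wedge^2 \hat M_K$ inherits a $\widehat{K[A]}$-module structure from $\hat M_K$, but this does not follow: the \emph{diagonal} action of $A$ on an exterior square is defined through the comultiplication $K[A]\to K[A]\otimes K[A]$, and upon completion this only lands in the completed tensor product $\widehat{K[A\times A]}$, which does not act on $\hat M_K\otimes \hat M_K$ in any evident way since $\hat M_K$ is not nilpotent. Equation~\eqref{eq_coinvariants} therefore cannot be invoked here. (Your analogous claim for $\mathcal M^m_K$ is fine precisely because $\wedge^2\mathcal M^m_K$ \emph{is} nilpotent, hence genuinely a $\widehat{K[A]}$-module.) The paper fills this step differently: it uses Proposition~\ref{prop_tensor} to identify $\hat M_K\otimes_{K[A]}(\hat M_K)_\sigma$ with $\hat M_K\otimes_{\widehat{K[A]}}(\hat M_K)_{\hat\sigma}$, so that the balancing relation $m_1\lambda\wedge m_2=m_1\wedge m_2\,\hat\sigma(\lambda)$ holds for all $\lambda\in\widehat{K[A]}$, and then computes directly, for $x\in\hat A_R$,
\[
m_1\theta_R(x)\wedge m_2\theta_R(x)=m_1\wedge m_2\,\hat\sigma(\theta_R(x))\theta_R(x)=m_1\wedge m_2\,\theta_R(x^{-1})\theta_R(x)=m_1\wedge m_2,
\]
using the identity $\hat\sigma\circ\theta_R=\theta_R\circ(\ )^{-1}$. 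This explicit computation is what your shortcut is missing.
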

\begin{proof}
First we prove that the assumptions of the Proposition
\ref{prop_tensor} and Corollary \ref{cor_wedge2} for the ring
$K[A]$ the ideal $I_K$ and the module $M_K$ hold. We need to prove
that $${\sf cl}({\sf Ann}_{K[A]}\: M_K + \sigma_{K}({\sf
Ann}_{K[A]}\: M_K))\supseteq I_{K}^{m_0}$$ for some $m_0\in
\mathbb N.$ Denote \begin{align*} & \aa_K:={\sf Ann}_{K[A]}\: N +
\sigma_{K}({\sf Ann}_{K[A]}\: N)\\ & \aa:={\sf Ann}\: M +
\sigma({\sf Ann}\: M).\end{align*}
Using that $M$ is tame,
\eqref{eq_union_ideal} and \eqref{eq_sigma_Sigma} we get
$$\Sigma(\mathbb Z[A]/ \aa)\supseteq \Sigma(\mathbb Z[A]/ {\sf
Ann}\: M)\cup (-\Sigma(\mathbb Z[A]/ {\sf Ann}\: M))={\sf S}(A).$$
Thus $\mathbb Z[A]/ \aa$ is a finitely generated abelian group.
Since the map $(\mathbb Z[A]/\aa)\otimes K\epi K[A]/\aa_K$ is an
epimorphism, $K[A]/ \aa_K$ is a finitely generated $K$-module.
Using that $K$ is an Artinian ring, we get that $K[A]/\aa_K$ is an
Artinian $K$-module, and hence the sequence $\aa_K+I_{K}^m$
stabilizes. Therefore, ${\sf cl}(\aa_K)\supseteq I_{K}^{m_0}$ for
some $m_0\in \mathbb N.$

Hence, by Corollary \ref{cor_wedge2}, for $m>\!>0,$ we have the
isomorphisms $$(\wedge^2 \hat M_K)_A \cong (\wedge^2 M_K^{\ell})_A
\cong (\wedge^2  (M_K/M_KI_K^m))_A.$$  Using Lemma
\ref{lemma_inclusions_n} and \eqref{eq_inclusion_of_towers_ZJ} we
get epimorphisms $M_K/M_KI_K^{s(m)}\epi \mathcal M^{t(m)}_K \epi
M_K/M_KI_K^{m}$ for some sequences $s(m),t(m)$ that converge to
infinity. For a big enough $m$ the epimorphism
$M_K/M_KI_K^{s(m)}\epi M_K/M_KI_K^{m}$ induces an isomorphism
$$(\wedge^2  (M_K/M_KI_K^{s(m)}))_A\cong (\wedge^2
(M_K/M_KI_K^m))_A.$$ It follows that the epimorphism $\mathcal
M^{t(m)}_K \epi M_K/M_KI_K^{m}$ induces an isomorphism $$(\wedge^2
\mathcal M^m_K)_A\cong (\wedge^2  (M_K/M_KI_K^m))_A\cong (\wedge^2
\hat M_K)_A$$ for $m>\!>0.$

Note that $(\wedge^2  \mathcal M^m_K)_{t_m^R(A)}=(\wedge^2
\mathcal M^m_K)_{\hat A_R}.$ Then we only need to prove that
$(\wedge^2 \hat M_K)_{\hat A_R}=(\wedge^2 \hat M_K)_A.$ For this
it is sufficient to prove that for $x\in \hat A_R$ and $m_1\wedge
m_2 \in (\wedge^2 \hat M_K)_A$ the identity $m_1x\wedge
m_2x=m_1\wedge m_2$ holds. The abelian group $(\wedge^2 \hat
M_K)_A$ is a quotient of $\hat M_K \otimes_{K[A]} (\hat
M_K)_{\sigma}.$ By Proposition \ref{prop_tensor} we have  $\hat
M_K \otimes_{K[A]} (\hat M_K)_{\sigma}=\hat M_K
\otimes_{\widehat{K[A]}} (\hat M_K)_{\hat \sigma}$ and by
Proposition \ref{prop_comp_rehash} the action of $\hat A_R$ on
$\hat M_K$ is induced by the structure of $\widehat{S[A]}$-module
via the homomorphism $\theta_R:\hat A_R\to \widehat{S[A]}$ and
$\widehat{S[A]}\epi \widehat{K[A]}.$ Hence, $$m_1x\wedge
m_2x=m_1\theta_R(x)\wedge m_2\theta_R(x)=m_1\wedge m_2\hat
\sigma(\theta_R(x))\theta_R(x)=$$
$$=m_1\wedge m_2\theta_R(x^{-1})\theta_R(x)=m_1\wedge m_2.$$
\end{proof}

\section{The limit formula.}

\begin{Theorem}\label{theorem_limit}
Let $G$ be a finitely presented metabelian group, $R=\mathbb
Z[J^{-1}]$ or $R=\mathbb Z/n$ and $K$ be an Artinian quotient ring
of $R$. Then the homomorphisms $\hat G_R\to t^R_i(G)$ and
$G/\gamma^R_i\to t^R_i(G)$ induce the isomorphisms
$$H_2(\hat G_R,K)\cong \ilimit H_2(t^R_i(G),K)\cong \ilimit H_2(G/\gamma^R_{i}(G),K).$$
\end{Theorem}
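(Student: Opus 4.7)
The plan is to compare two Lyndon--Hochschild--Serre spectral sequences: one for the metabelian decomposition $\hat M_S\rightarrowtail \hat G_R\twoheadrightarrow \hat A_R$ provided by Proposition \ref{prop_comp_rehash}, and one for each finite-level extension $\mathcal M^i\rightarrowtail t_i^R(G)\twoheadrightarrow t_i^R(A)$ coming from \eqref{eq_short_exact_i}. The structural maps $\hat G_R\to t_i^R(G)$ induce a compatible system of maps $E\to E(i)$ between these spectral sequences (with $K$-coefficients), and the goal is to show this map induces an isomorphism on $H_2$ after passing to $\varprojlim_i$.

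For the rightmost isomorphism $\varprojlim H_2(G/\gamma_i^R(G),K)\cong \varprojlim H_2(t_i^R(G),K)$, the case $R=\mathbb Z/n$ is trivial since $t_i^R(G)=G/\gamma_i^R(G)$ by definition. For $R=\mathbb Z[J^{-1}]$, the map $G/\gamma_i(G)\to (G/\gamma_i(G))\otimes R=t_i^R(G)$ is the Malcev $J$-localization of a finitely generated nilpotent group, so Proposition \ref{prop_finite_module} applied to the trivial module $N=K$ yields the isomorphism at each level $i$, and therefore also after $\varprojlim$.

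For the main (left) isomorphism, I first note that since $G$ is finitely presented, $M$ is a tame $\mathbb Z[A]$-module. By Lemma \ref{lemma_finitely_gen_H2} and the universal coefficient theorem, all the $E(i)^2_{p,q}$ with $p+q\leq 2$ are finitely generated $K$-modules; because $K$ is Artinian they are Artinian, so the inverse systems $\{E(i)^r_{p,q}\}_i$ satisfy the Mittag-Leffler condition. Then by Corollary \ref{corollary_shoert_eact _sequence_limit} (applied termwise, in the style of Lemma \ref{lemma_spectral_sequences_limit}), the inverse-limit spectral sequence $\varprojlim E(i)$ converges strictly to $\varprojlim H_{p+q}(t_i^R(G),K)$ in the range contributing to $H_2$. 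It therefore suffices to show that $E^2_{p,q}\to \varprojlim E(i)^2_{p,q}$ is an isomorphism for $(p,q)\in\{(2,0),(1,1),(0,2)\}$.

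The $(2,0)$-term $H_2(\hat A_R,K)\to \varprojlim H_2(t_i^R(A),K)$ is easy: for $R=\mathbb Z[J^{-1}]$ the group $t_i^R(A)=\hat A_R$ is independent of $i$, while for $R=\mathbb Z/n$ the identification follows from the explicit formula $H_*(X,\mathbb Z/n)\cong \bigwedge^*(X/n)\otimes\Gamma^*({}_nX)$ for abelian $X$ together with the continuity of $(-)/n$ and $(-)[n]$ on the tower $\{A/A^{n^i}\}$. The $(1,1)$-term $H_1(\hat A_R,\hat M_K)\to \varprojlim H_1(t_i^R(A),\mathcal M^i_K)$ is handled by Proposition \ref{prop_homology_completion_rn} (which gives $H_*(A,\hat M_K)\cong \varprojlim H_*(A,M_K/M_KI_K^i)$ via $\varprojlim^1$ vanishing on Artinian systems) combined with Lemma \ref{lemma_homology_isom_nilpotent_quotient} to replace the action of $A$ by that of its appropriate finite quotients. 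The main obstacle, and the place where tameness is essential, is the $(0,2)$-term $(H_2(\hat M_S,K))_{\hat A_R}$. Via universal coefficients this splits as $(\wedge^2\hat M_K)_{\hat A_R}\oplus({\sf Tor}(\hat M_S,K))_{\hat A_R}$; the exterior-square summand is controlled by Proposition \ref{prop_wedge_compl_loc}, which gives $(\wedge^2\hat M_K)_{\hat A_R}\cong (\wedge^2\mathcal M^i_K)_{t_i^R(A)}$ for $i\gg 0$ and so trivializes the inverse system, while the Tor summand is treated by the same argument applied to $\hat M_K$ using Lemma \ref{lemma_tor} and Proposition \ref{prop_tensor}. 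Assembling these three comparisons yields $H_2(\hat G_R,K)\cong \varprojlim H_2(t_i^R(G),K)$, completing the proof.
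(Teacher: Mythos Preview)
Your overall strategy---compare the LHS spectral sequence of $\hat M_S\rightarrowtail \hat G_R\twoheadrightarrow \hat A_R$ with the inverse limit of the spectral sequences of $\mathcal M^i\rightarrowtail t_i^R(G)\twoheadrightarrow t_i^R(A)$ on the $E^2$-page in total degree $\le 2$---is exactly the paper's approach. However, several steps in your execution are genuine gaps.

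First, you never reduce the case $R=\mathbb Z/n$ to the case of prime $n$. This matters because Lemma \ref{lemma_homology_isom_nilpotent_quotient}, which you invoke for the $(1,1)$-term, is stated and proved only for $K$ of prime characteristic $p$ (the proof uses $a^{p^i}-1\equiv (a-1)^{p^i}$ mod $p$). Likewise, the K\"unneth-type formula you quote for the $(2,0)$-term is the $\mathbb Z/p$-formula, not a $\mathbb Z/n$-formula. The paper performs this reduction at the outset (using $\hat G_{\mathbb Z/n}\cong \prod_j \hat G_{\mathbb Z/p_j}$ and Proposition \ref{proposition_p-completion-q}), and without it your argument does not go through for composite $n$.

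Second, and more seriously, you never explain how to pass from $H_*(\hat A_R,-)$ to $H_*(A,-)$. Your $(1,1)$-argument tacitly assumes $H_1(\hat A_R,\hat M_K)\cong H_1(A,\hat M_K)$, but $\hat A_R$ is in general much larger than $A$ (e.g.\ for $R=\mathbb Z/p$ one has ${\rm Coker}(A\to\hat A_p)\cong \mathbb Q^{\oplus\mathbf c}$ when $A$ is infinite). The paper handles this via Lemma \ref{lemma_hat_without}, whose proof in turn relies on the vanishing results of Lemma \ref{lemma_kernel_cokernel_zero_homology} for $A_1={\rm Ker}(\tau_A)$ and $A_2={\rm Coker}(\tau_A)$; these are not automatic and you do not supply them.

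Finally, at the $(0,2)$-term you assert that the universal-coefficient sequence \emph{splits} as $(\wedge^2\hat M_K)_{\hat A_R}\oplus({\sf Tor}(\hat M_S,K))_{\hat A_R}$. The UCT splitting is not natural, so there is no reason it should be $\hat A_R$-equivariant or compatible with the tower maps; you cannot compare summands separately. The paper avoids this by writing down the four-term exact sequence (coming from the UCT sequence and the connecting map out of $H_1(\hat A_R,{\sf Tor}(-,K))$) and applying the five lemma, as in Proposition \ref{proposition_isom3}; this is where Proposition \ref{prop_wedge_compl_loc} is used for the exterior-square term and Lemma \ref{lemma_tor} for the ${\sf Tor}$ term.
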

\begin{Corollary}\label{cor_limit}
Let $G$ be a finitely presented metabelian group, and $p$ is a
prime number. Then $$H_2(\hat G_p,\mathbb Z/p)\cong H_2^{\sf
cont}(\hat G_p,\mathbb Z/p).$$
\end{Corollary}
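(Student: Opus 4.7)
The plan is to deduce this corollary almost immediately from Theorem \ref{theorem_limit} by specializing $R=K=\mathbb{Z}/p$ and then matching the resulting inverse limit against the standard expression for continuous homology of a pro-$p$ group.

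First I would apply Theorem \ref{theorem_limit} in the case $R=K=\mathbb{Z}/p$. By the identification $\hat G_{\mathbb{Z}/p}\cong \hat G_p$ recorded in \eqref{eq_p-profinite}, the theorem yields a natural isomorphism
$$H_2(\hat G_p,\mathbb{Z}/p)\;\cong\;\ilimit\, H_2(G/\gamma_i^{[p]}(G),\mathbb{Z}/p).$$
So the whole task reduces to identifying the right-hand side with $H_2^{\sf cont}(\hat G_p,\mathbb{Z}/p)$.

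Next I would check that the tower $\{G/\gamma_i^{[p]}(G)\}$ is a cofinal system of finite $p$-group quotients of $\hat G_p$. By the inductive definition of the mod-$p$ lower central series, each successive abelian quotient $\gamma_i^{[p]}(G)/\gamma_{i+1}^{[p]}(G)$ is of the form $(\gamma_i^{[p]}(G)/[G,\gamma_i^{[p]}(G)])\otimes \mathbb{Z}/p$; since $G$ is finitely generated, each such quotient is a finite-dimensional $\mathbb{F}_p$-vector space, and hence every $G/\gamma_i^{[p]}(G)$ is a finite $p$-group. Because $\hat G_p=\varprojlim G/\gamma_i^{[p]}(G)$ by definition of the $\mathbb{Z}/p$-completion, any finite $p$-group quotient of $\hat G_p$ is dominated by some $G/\gamma_i^{[p]}(G)$, which establishes cofinality.

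Finally I would invoke the standard description of continuous homology: for a finitely generated pro-$p$ group $P$ and any cofinal system $\{P/U_i\}$ of finite quotients, one has
$$H_2^{\sf cont}(P,\mathbb{Z}/p)\;\cong\;\ilimit\, H_2(P/U_i,\mathbb{Z}/p),$$
where the right-hand side is ordinary discrete group homology. Applied to $P=\hat G_p$ with $U_i$ the image of $\gamma_i^{[p]}(G)$, this gives $H_2^{\sf cont}(\hat G_p,\mathbb{Z}/p)\cong \ilimit H_2(G/\gamma_i^{[p]}(G),\mathbb{Z}/p)$, and combining with the isomorphism from Theorem \ref{theorem_limit} completes the argument. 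There is no real obstacle: all of the analytic content is already packaged in Theorem \ref{theorem_limit}, and the remaining work is the purely formal matching of the discrete inverse limit with the continuous homology functor.
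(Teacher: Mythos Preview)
Your proposal is correct and follows essentially the same approach as the paper: apply Theorem~\ref{theorem_limit} with $R=K=\mathbb{Z}/p$ and then identify $\ilimit H_2(G/\gamma_i^{[p]}(G),\mathbb{Z}/p)$ with $H_2^{\sf cont}(\hat G_p,\mathbb{Z}/p)$. The paper simply cites \cite{Serre} for the latter identification, whereas you spell out the cofinality argument explicitly, but the substance is the same.
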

\begin{proof}
If follows from Theorem \ref{theorem_limit} and the formula
$H^{\sf cont}_*(\hat G_p,\mathbb Z/p)\cong \ilimit
H_*(G/\gamma^{[p]}_i(G),\mathbb Z/p)$ \cite{Serre}.
\end{proof}
\begin{Corollary}
Let $R=\mathbb Z[J^{-1}]\ne\mathbb Q $ and $n\in \mathbb N,$ such
that the prime divisors of $n$ do not lie in $J.$ Then $$H_2(\hat
G_R,\mathbb Z/n)\cong H_2(\hat G_{\mathbb Z},\mathbb Z/n).$$
\end{Corollary}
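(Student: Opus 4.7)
The plan is to apply Theorem \ref{theorem_limit} twice: once with the given $R = \mathbb Z[J^{-1}]$ and once with $R = \mathbb Z$ (which corresponds to $J = \emptyset$ in the theorem), taking $K = \mathbb Z/n$ in both applications. The preliminary step is to verify that $\mathbb Z/n$ is an Artinian quotient of both rings: the hypothesis that the prime divisors of $n$ lie outside $J$ makes the projection $\mathbb Z[J^{-1}] \epi \mathbb Z/n$ well-defined, and the projection $\mathbb Z \epi \mathbb Z/n$ is of course standard.

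Next, I would recall from Section \ref{section_notation} that for any $R$ of the form $\mathbb Z[J^{-1}]$ (including $R = \mathbb Z$) the $R$-lower central series coincides with the ordinary lower central series, that is $\gamma^R_i(G) = \gamma_i(G)$. Consequently the two applications of Theorem \ref{theorem_limit} yield inverse limits over the \emph{same} tower $\bigl\{H_2(G/\gamma_i(G),\mathbb Z/n)\bigr\}_i$, so composing the two resulting isomorphisms gives
$$H_2(\hat G_R,\mathbb Z/n) \;\cong\; \ilimit\, H_2(G/\gamma_i(G),\mathbb Z/n) \;\cong\; H_2(\hat G_{\mathbb Z},\mathbb Z/n).$$

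To identify this composite with the map induced by the natural comparison $\hat G_{\mathbb Z} \to \hat G_R$, I would observe that this comparison is itself the inverse limit of the units $G/\gamma_i(G) \to (G/\gamma_i(G)) \otimes R$ of the Malcev $R$-completion, and that by Theorem \ref{theorem_limit} both of the above isomorphisms are induced by the natural projections from the respective completions onto $G/\gamma_i(G)$. Functoriality of the theorem then forces the composite to coincide with the map on $H_2$ induced by $\hat G_{\mathbb Z} \to \hat G_R$.

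There is essentially no obstacle here: once Theorem \ref{theorem_limit} is in hand, the corollary is a bookkeeping observation that localizing $\mathbb Z$ at a set of primes does not affect the lower central series of $G$, so the $R$-completion and the $\mathbb Z$-completion are built from the same tower of nilpotent quotients and therefore have the same second homology with $\mathbb Z/n$-coefficients.
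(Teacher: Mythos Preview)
Your proposal is correct and is exactly the argument the paper has in mind: the corollary is stated without proof immediately after Theorem \ref{theorem_limit}, and your two applications of that theorem (with $R=\mathbb Z[J^{-1}]$ and with $R=\mathbb Z$, both for $K=\mathbb Z/n$) together with the observation from Section \ref{section_notation} that $\gamma^R_i(G)=\gamma_i(G)$ for any $R=\mathbb Z[J^{-1}]$ is precisely the intended reasoning. Your additional remark on naturality via the comparison $\hat G_{\mathbb Z}\to \hat G_R$ goes slightly beyond what the statement asserts, but it is correct and harmless.
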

\begin{proof}[Proof of Theorem \ref{theorem_limit}]
First we note that on the category of finitely generated
$K$-modules the functor $$\ilimit: {\sf mod}(K)^{\omega^{\rm
op}}\to {\sf mod}(K)$$ is an exact functor, because the
Mittag-Leffler condition holds for all inverse sequences. In the
proof we use $\ilimit$  only in this category, and we use the
exactness. Further, by Proposition \ref{prop_finite_module} we
have $H_2(t^R_i(G),K)\cong H_2(G/\gamma^R_i(G),K).$ Hence we only
need to prove the isomorphism $H_2(\hat G_R,K)\cong \ilimit
H_2(t^R_i(G),K).$

We reduce the theorem to a prime $n$ in the case of $R=\mathbb
Z/n.$ Assume that the theorem holds for the case of $R=\mathbb
Z/p$, where $p$ is prime. Let now $R=\mathbb Z/n$, where
$n=p_1^{s_1}\cdot{}\dots{}\cdot p_l^{s_l}$. Then we have
isomorphisms $$\hat G_{\mathbb Z/n}\cong \prod \hat G_{\mathbb
Z/p_j} \ \ \text{and} \ \  G/{\gamma^{[n]}_i(G)}\cong \prod_j
G/\gamma^{\left[p_j^{s_j}\right]}_i(G)$$  \cite[12.3]{Bousfield}.
Let $p$ be one of the prime divisors.  Since $\mathbb Z/p$ is a
field, we have the isomorphism $$H_2(\hat G_{\mathbb Z/n},\mathbb
Z/p)\cong \bigoplus_{i_1+\dots+i_l=2} \bigotimes_{j=1}^l
H_{i_j}(\hat G_{\mathbb Z/p_j},\mathbb Z/p).$$ If $p_j\ne p,$ by
Proposition \ref{proposition_p-completion-q} we have $H_2(\hat
G_{\mathbb Z/p_i},\mathbb Z/p)=0$ and obviously $H_1(\hat
G_{\mathbb Z/p_i},\mathbb Z/p)=0.$  Thus $H_2(\hat G_{\mathbb
Z/n},\mathbb Z/p)\cong  H_2(\hat G_{\mathbb Z/p},\mathbb Z/p).$
Similarly we get $$H_2( G/\gamma^{[n]}_i(G),\mathbb Z/p)\cong
H_2(G/\gamma^{[p^s]}_i(G),\mathbb Z/p).$$ Since
$\gamma^{[p]}_i(G)\supseteq \gamma^{[p^s]}_i(G)\supseteq
\gamma^{[p]}_{i'}(G)$ for any $i,$ we get $\ilimit
H_2(G/\gamma^{[p^s]}_i(G),\mathbb
Z/p)=H_2(G/\gamma^{[p]}_i(G),\mathbb Z/p).$ Then we obtain the
isomorphism $$H_2(\hat G_{\mathbb Z/n},\mathbb Z/p)\cong \ilimit
H_2( G/\gamma^{[n]}_i(G),\mathbb Z/p).$$ Then the theorem holds
for $R=\mathbb Z/n$ and $K=\mathbb Z/p,$ where $p$ is a divisor of
$n.$ Further, using the short exact sequence $\mathbb Z/p^{i}\mono
\mathbb Z/p^{i+1} \epi \mathbb Z/p,$ and the associated long exact
sequence of homology $H_*(\hat G_{\mathbb Z/n},-)$ and
$H_*(G/\gamma^{[n]}_i(G),-)$ by induction we get the theorem for
$R=\mathbb Z/n$ and $K=\mathbb Z/p^m.$ Finally, for $K=\mathbb
Z/n'=\bigoplus \mathbb Z/p_i^{m_i},$ we have $H_2(\hat G_{\mathbb
Z/n},K)=\bigoplus H_2(\hat G_{\mathbb Z/n},\mathbb Z/p_i^{m_i}).$
Therefore, the theorem holds in the general case for $R=\mathbb
Z/n.$ Further, we will assume that in the case of $R=\mathbb Z/n$
that $n=p$ is prime.

Let $E$ be a first quadrant homological spectral sequence that
converges to $\mathcal H_*$. If we are interested only in
$\mathcal H_m$ for $0\leq m\leq 2$ it is convenient to cut off the
spectral sequence as follows:
$$\tilde E^{r}_{pq}:=\left\{
\begin{array}{ll}
E^r_{pq}, & \text{ if } q\in \{0,1\} \text{ or } (p,q)=(0,2) \\
0, & \text{ else}
\end{array}\right.$$ with the obvious differentials.  Then $\tilde E$ has a limit $\tilde{\mathcal H}_*$ such that
$$\tilde{\mathcal H}_m\cong  \mathcal H_m, \text{ \ for } 0\leq m\leq 2.$$

Consider the morphism of metabelian decompositions:
\begin{equation} \label{eq_morphism_of_met}
\begin{tikzpicture}
  \matrix (m)
 [matrix of math nodes,row sep=0.5cm,column sep=1cm,minimum width=2em]
  { 0 &  {\hat M}_S & \hat G_R & \hat A_R & 1 \\
  0 & \mathcal M^i & t^R_i(G)  &  t^R_i(A) & 1. \\};
  \path[-stealth]
(m-1-1.east|-m-1-2) edge (m-1-2)
(m-1-2.east|-m-1-3) edge  (m-1-3)
(m-1-3.east|-m-1-4) edge  (m-1-4)
(m-1-4.east|-m-1-5) edge  (m-1-5)
(m-2-1.east|-m-2-2) edge  (m-2-2)
(m-2-2.east|-m-2-3) edge  (m-2-3)
(m-2-3.east|-m-2-4) edge  (m-2-4)
(m-2-4.east|-m-2-5) edge  (m-2-5)
(m-1-2) edge (m-2-2)
(m-1-3) edge (m-2-3)
(m-1-4) edge (m-2-4);
\end{tikzpicture}
\end{equation}
It induces the morphism of Lyndon-Hochschild-Serre spectral
sequences $E(\hat G_R)\to E(t^R_i(G))$ for homology with
coefficients in $K.$ If we cut off them we get the morphism
$\tilde E(\hat G_R)\to \tilde E(t^R_i(G)).$ We assume that these
spectral sequences start from the second page.

We prove that all the terms of the spectral sequence $E(t^R_i(G))$
are finitely generated $K$-modules. We only need to prove it for
the second page. For first two rows it is obvious and for
$(p,q)=(0,2)$ it is the statement of Lemma
\ref{lemma_finitely_gen_H2}.

Therefore  $\ilimit$ is exact on the terms of $\tilde
E(t^R_i(G)),$ and hence we can apply it to all the terms of
$\tilde E(t^R_i(G))$ and get a new spectral sequence
$$\tilde E^{\rm lim}=\ilimit \tilde  E(t^R_i(G)).$$
The morphisms $\tilde E(\hat G_R)\to \tilde E(t^R_i(G))$ induce the morphism
$$\tilde E(\hat G_R)\longrightarrow \tilde E^{\rm lim}.$$
In order to finish the prove it is sufficient to prove that the
morphism $\tilde E(\hat G_R)\to \tilde E^{\rm lim}$ is an
isomorphism of spectral sequences. It is enough to prove it on the
second pages. In other words we need to prove that the morphisms
\begin{equation}\label{eq_isom1}
H_*(\hat A_R, K)\longrightarrow \ilimit H_*( t^R_i(A), K),
\end{equation}
\begin{equation}\label{eq_isom2}
H_*(\hat A_R, M_K)\longrightarrow \ilimit H_*( t^R_i(A),\mathcal M^i_K),
\end{equation}
\begin{equation}\label{eq_isom3}
H_2(\hat M_S,K)_{\hat A_R}\longrightarrow \ilimit H_2(\mathcal M^i,K)_{t^R_i(A)}
\end{equation} are isomorphisms. Note that the homomorphism \eqref{eq_isom1} is a special case of \eqref{eq_isom2}.
Then need to prove that the homomorphism \eqref{eq_isom2} and the homomorphism \eqref{eq_isom3} are isomorphisms.
We prove it in Propositions \ref{proposition_isom2} and \ref{proposition_isom3}.
\end{proof}

We denote the natural homomorphisms
$$\tau_A:A\longrightarrow \hat A_R, \hspace{1cm} \tau_M: M \longrightarrow \hat M_S$$ and
$$A_1:={\rm Ker}(\tau_A), \hspace{1cm} A_2:={\rm Coker}(\tau_A).$$
By ${\sf t}_p(A)$ we denote the $p$-power torsion subgroup of $A.$

\begin{Lemma}\label{lemma_Ker_Coker_J}
\begin{enumerate}
\item $A_1=\bigoplus_{p\in J} {\sf t}_p(A).$ \item If $R=\mathbb
Z[J^{-1}]$, then  $A_2=(\mathbb Z[J^{-1}]/\mathbb Z)\otimes A$
\item If $R=\mathbb Z/p$, then  $A_2=(\mathbb Z_p/\mathbb
Z)\otimes A.$ Moreover, if $A$ is finite, then $A_2=0$, else
$A_2\cong \mathbb Q^{\oplus {\bf c}}$ .
\end{enumerate}
\end{Lemma}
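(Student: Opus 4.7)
The plan is to reduce every claim to a summand-by-summand computation via the structure theorem for finitely generated abelian groups. Writing $A\cong \mathbb{Z}^r \oplus \bigoplus_p {\sf t}_p(A)$ with each ${\sf t}_p(A)$ finite, I note that both the formation of $\hat A_R$ (which is $A\otimes R$ for $R=\mathbb{Z}[J^{-1}]$ and $\ilimit A/A^{n^i}$ for $R=\mathbb{Z}/n$) and the natural map $\tau_A$ are additive in $A$. It therefore suffices to determine $A_1$ and $A_2$ on each summand and assemble the pieces.

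For the case $R=\mathbb{Z}[J^{-1}]$, I will apply $-\otimes A$ to the short exact sequence $0\to \mathbb{Z}\to R\to R/\mathbb{Z}\to 0$ to obtain the four-term exact sequence
$$0 \longrightarrow {\sf Tor}(R/\mathbb{Z},A) \longrightarrow A \overset{\tau_A}{\longrightarrow} A\otimes R \longrightarrow (R/\mathbb{Z})\otimes A \longrightarrow 0,$$
which directly yields the formula in (2). For the matching part of (1), I will use the decomposition $R/\mathbb{Z} = \bigoplus_{p\in J} \mathbb{Z}(p^\infty)$ together with the identification ${\sf Tor}(\mathbb{Z}(p^\infty),A) = \varinjlim {\sf Tor}(\mathbb{Z}/p^n,A) = \varinjlim A[p^n] = {\sf t}_p(A)$.

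For the case $R=\mathbb{Z}/p$ (so that $J=\{q\text{ prime}\,|\, q\ne p\}$), I will directly compute $\ilimit A/A^{p^i}$ on each summand: on $\mathbb{Z}$ it yields $\mathbb{Z}_p$; on ${\sf t}_p(A)$ the tower stabilises and returns the group itself; on a $q$-primary summand with $q\ne p$ it contributes $0$ because $p$ is invertible modulo each $q^k$. Thus $\hat A_R \cong \mathbb{Z}_p^r \oplus {\sf t}_p(A)$, and $\tau_A$ is the canonical inclusion $\mathbb{Z}^r\hookrightarrow \mathbb{Z}_p^r$ combined with the identity on ${\sf t}_p(A)$ and zero on each ${\sf t}_q(A)$ with $q\ne p$. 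Reading this off gives (1) and shows $A_2 = (\mathbb{Z}_p/\mathbb{Z})^r$. The rewriting $A_2 = (\mathbb{Z}_p/\mathbb{Z})\otimes A$ then follows because $\mathbb{Z}_p/\mathbb{Z}$ is divisible (multiplication by any positive integer is surjective, as one sees from the $p$-adic digit expansion), so the tensor product kills the finite torsion summands of $A$. If $A$ is finite then $r=0$ and $A_2=0$; otherwise $r\geq 1$, and the identification $A_2\cong \mathbb{Q}^{\oplus {\bf c}}$ comes from applying the structure theorem for divisible abelian groups to $(\mathbb{Z}_p/\mathbb{Z})^r$ together with a cardinality count on its torsion-free part.

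The routine ingredients are the four-term \textsf{Tor} sequence and the explicit evaluation of the inverse limits on each cyclic summand. The main place that demands care is the final identification of $A_2$: establishing divisibility of $\mathbb{Z}_p/\mathbb{Z}$ and correctly extracting its $\mathbb{Q}^{\oplus {\bf c}}$ structure via the classification of divisible groups is the only step beyond bookkeeping.
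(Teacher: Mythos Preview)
Your approach matches the paper's: the authors declare items (1), (2), and the first half of (3) obvious and spend the proof only on the final isomorphism, arguing exactly as you do via divisibility of $\mathbb{Z}_p/\mathbb{Z}$ and the structure theorem for divisible groups. Your extra detail on the routine parts---the four-term ${\sf Tor}$ sequence for (2) and the summand-by-summand computation of the inverse limit for (3)---fleshes out what the paper leaves implicit.

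There is, however, a gap in the final step that your sketch and the paper's proof share. To conclude $\mathbb{Z}_p/\mathbb{Z}\cong\mathbb{Q}^{\oplus{\bf c}}$ from the structure theorem one needs torsion-freeness, not just divisibility plus a cardinality count; the paper asserts torsion-freeness outright, and you gesture at ``its torsion-free part'' without verifying that this is the whole group. In fact $\mathbb{Z}_p/\mathbb{Z}$ is \emph{not} torsion-free: for any prime $q\neq p$ one has $1/q\in\mathbb{Z}_p\setminus\mathbb{Z}$ with $q\cdot(1/q)\in\mathbb{Z}$, and the torsion subgroup is $(\mathbb{Q}\cap\mathbb{Z}_p)/\mathbb{Z}=\mathbb{Z}_{(p)}/\mathbb{Z}\cong\bigoplus_{q\neq p}\mathbb{Z}(q^\infty)$. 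The correct description is therefore $\mathbb{Z}_p/\mathbb{Z}\cong\mathbb{Q}^{\oplus{\bf c}}\oplus\bigoplus_{q\neq p}\mathbb{Z}(q^\infty)$. The only downstream use of this lemma (the vanishing of $H_k(A_2,L)$ for residually nilpotent $\mathbb{Z}/p$-modules $L$) still goes through, since the extra Pr\"ufer summands carry no $p$-torsion and contribute nothing to homology with $\mathbb{Z}/p$ coefficients; but the statement as written, and both proofs of it, are inaccurate at this point.
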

\begin{proof}
The only non-obvious thing is the last isomorphism. First, we
prove that $\mathbb Z_p/\mathbb Z$ is a divisible abelian group.
Let $q\ne p$ be a prime. The group $\mathbb Z_p$ is $q$-divisible,
and hence $\mathbb Z_p/\mathbb Z$ is $q$-divisible. Then we need
to prove that $\mathbb Z_p/\mathbb Z$ is $p$-divisible. It follows
from the following  equality modulo $\mathbb Z$:
$\sum_{i=0}^{\infty} \alpha_ip^i= \sum_{i=1}^{\infty}
\alpha_ip^i=p(\sum_{i=1}^{\infty} \alpha_{i}p^{i-1}).$ Therefore,
$\mathbb Z_p/\mathbb{Z}$ is a divisible torsion-free group. Then
by description of divisible groups \cite[IV]{Fuchs} we get
$\mathbb Z_p/\mathbb{Z}\cong \mathbb Q^{\oplus {\bf c}}$. Then the
required statement follows immediately.
\end{proof}

\begin{Lemma}\label{lemma_kernel_cokernel_zero_homology} \
\begin{enumerate}
\item If $H$ a finite group such that $|H|^{-1}\in \mathbb
[J^{-1}]$ and $L$ is an $S[H]$-module, then $H_k(H,L)=0$ for
$k>0.$ \item If $L$ is a $S[A_1]$-module, then $H_k(A_1,L)=0$ for
$k>0.$ \item If $L$ is a residually nilpotent $K[A_2]$-module,
then $H_k(A_2,L)=0$ for $k>0.$
\end{enumerate}
\end{Lemma}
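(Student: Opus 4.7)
The plan is to treat the three parts in increasing order of difficulty. Part (1) is the classical order--coprime vanishing: $|H|$ is invertible in $S$, hence acts invertibly on every $S$--module and in particular on each $H_k(H,L)$; simultaneously $|H|$ annihilates $H_k(H,L)$ for $k>0$ via the standard restriction--corestriction composition through the trivial subgroup, forcing $H_k(H,L)=0$. Part (2) is then immediate: by Lemma \ref{lemma_Ker_Coker_J}(1) the group $A_1=\bigoplus_{p\in J}\mathsf{t}_p(A)$ is finite (since $A$ is finitely generated abelian) with order a product of primes in $J$, hence invertible in $S=\mathbb Z[J^{-1}]$; apply~(1) with $H=A_1$.

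For part (3) the plan is first to reduce to trivial $A_2$--action on $L$, then to compute $H_*(A_2,L)$ for trivial coefficients. For the reduction, assume $\operatorname{char}K=n>0$ (the case $K=\mathbb Q$ forces $R=\mathbb Q$, in which $A_2=(\mathbb Q/\mathbb Z)\otimes A$ is torsion and the computation below applies directly for nontrivial coefficients as well). Using Lemma \ref{lemma_Ker_Coker_J}, the group $A_2$ is divisible, so for every $a\in A_2$ and every $i\ge 1$ one can write $a=b^{n^{2i+1}}$ for some $b\in A_2$. Since $\operatorname{char}K[A_2]$ divides $n$, Corollary \ref{corollary_a_v_bolshoi_step}(3) applied to $\mathfrak R=K[A_2]$ with $r=b$ gives
$$a-1=b^{n^{2i+1}}-1\in (b-1)^i\cdot K[A_2]\subseteq I(A_2)^i.$$
As the $a-1$ generate $I(A_2)$, this yields $I(A_2)\subseteq I(A_2)^i$ for every $i$, whence $I(A_2)=I(A_2)^\infty$. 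The residual nilpotence hypothesis on $L$ then forces $L\cdot I(A_2)=L\cdot I(A_2)^\infty=0$, so the $A_2$--action on $L$ is trivial.

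For the computation I would split on the structure of $A_2$ recorded in Lemma \ref{lemma_Ker_Coker_J}. If $A_2$ is torsion (the case $R=\mathbb Z[J^{-1}]$), write $A_2=\operatorname{colim}_B B$ over its finite subgroups; each such $B$ has order a product of primes in $J$, invertible in $K$, so part~(1) gives $H_k(B,L)=0$ for $k>0$, and since group homology commutes with filtered colimits in the group variable, $H_k(A_2,L)=\operatorname{colim}_B H_k(B,L)=0$. If instead $A_2\cong\mathbb Q^{\oplus{\bf c}}$ (the case $R=\mathbb Z/p$ with $A$ infinite), then $K=\mathbb Z/p$ is a field; filter $\mathbb Q^{\oplus{\bf c}}$ by its finite--dimensional $\mathbb Q$--subspaces $\mathbb Q^n$. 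Writing $\mathbb Q=\operatorname{colim}(\mathbb Z\xrightarrow{\cdot m_1}\mathbb Z\xrightarrow{\cdot m_2}\cdots)$ along any cofinal sequence of positive integers, the induced maps on $H_1(\mathbb Z,K)=K$ are multiplication by $m_j$ and vanish as soon as $p\mid m_j$, so $H_k(\mathbb Q,K)=0$ for $k\ge 1$. K\"unneth over the field $K$ extends this to $\mathbb Q^n$, a further filtered colimit extends it to $\mathbb Q^{\oplus{\bf c}}$, and writing the trivial $K$--module $L$ as a direct sum of copies of $K$ finishes the computation.

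The main obstacle is the trivialization step: squeezing $I(A_2)=I(A_2)^\infty$ in $K[A_2]$ out of the divisibility of $A_2$ and the large--power estimate of Corollary \ref{corollary_a_v_bolshoi_step}(3). Everything else is a mixture of the classical order--invertibility vanishing (part~(1)), the structural description of $A_1$ and $A_2$ (Lemma \ref{lemma_Ker_Coker_J}), and routine filtered--colimit plus K\"unneth manipulations.
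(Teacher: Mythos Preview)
Your proof is correct and follows the same overall architecture as the paper's: part~(1) by order--invertibility, part~(2) by reducing $A_1$ to a finite $J$-group, and part~(3) by splitting on whether $A_2$ is torsion (direct limit of finite $J$-groups) or a $\mathbb Q$-vector space (trivialize the action, then compute). The execution differs in two minor respects. For~(1), the paper observes that the averaging idempotent $|H|^{-1}\sum_{h\in H}h$ makes $S$ a projective $S[H]$-module, so ${\sf Tor}^{S[H]}_k(S,L)=0$; your restriction--corestriction argument is the homological shadow of the same idempotent. For the $\mathbb Q^{\oplus\mathbf c}$ case in~(3), the paper uses that $H_k(A_2)\cong\wedge^k A_2$ is a $\mathbb Q$-vector space and concludes via the universal coefficient theorem, whereas you build the computation from $H_*(\mathbb Z,K)$ through filtered colimits and K\"unneth; the paper's route is marginally shorter. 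Your trivialization step via Corollary~\ref{corollary_a_v_bolshoi_step}(3) is a mild generalization of the paper's direct identity $a-1\equiv(a_1-1)^p\bmod p$, and in the torsion case it is in fact unnecessary since part~(1) already handles arbitrary coefficients.
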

\begin{proof}
(1) The trivial $S[H]$-module $S$ is projective because it is
isomorphic to the direct summand  of $S[H]$ given by the image of
the projector $x\mapsto (\sum_{h\in H} xh)/|H|.$ Therefore, for
any $S[H]$-module $L$ we have  $H_k(H, L)={\sf Tor}_k^{S[H]}(S,
L)=0.$

(2) It follows from Lemma \ref{lemma_Ker_Coker_J} and (1).

(3) Let $R=\mathbb Z[J^{-1}].$ Then $A_2=(\mathbb
Z[J^{-1}]/\mathbb Z)\otimes A$. Since $\mathbb Z[J^{-1}]/\mathbb
Z=\bigoplus_{q\in J} \mathbb Z/q^{\infty},$ we get $A_2=(\mathbb
Z[J^{-1}]/\mathbb Z)^d\oplus B_0,$ where $B_0$ is a finite group
such that $|B_0|^{-1}\in \mathbb Z[J^{-1}].$ The group $\mathbb
Z[J^{-1}]/\mathbb Z$ is isomorphic to the direct limit
$\varinjlim\: \mathbb Z/j_i,$ where $j_i$ runs over natural
numbers with prime divisors in $J$ such that for any natural
number $j$ with prime divisors in $J$ we have $j\mid j_i$ for
$i>\!>0.$ Therefore, $A_2$ is isomorphic to the direct limit
$\varinjlim\: B_i,$ where $B_i$ is a finite abelian group such
that $|B_i|^{-1}\in \mathbb Z[J^{-1}].$ Using (1) and the
epimorphism $S\epi K$ we get $H_k(B_i,L)=0.$ Finally, using the
formula $H_k(A_2,L)=\varinjlim\: H_k( B_i,L)$ we get
$H_k(A_2,L)=0.$

Let $R=\mathbb Z/p.$ If $A_2=0$, the statement is obvious, then we
can assume $A_2\cong \mathbb Q^{\oplus {\bf c}}$ and $K=\mathbb
Z/p.$  Hence for an element $a$ of $A_2$ there is an element
$a_1\in A_2$  such that $a=a_1^p.$ Using the equality
$a-1=a_1^{p}-1= (a_1-1)^p\ {\sf mod }\ p$ we get
$LI_R=LI_R^p=LI_R^{\infty}=0,$ and hence the action of $A_2$ on
$L$ is trivial. Since $A_2$ is torsion-free, we have
$H_k(A_2)=\wedge^k A_2.$ Then $H_k(A_2)\otimes L=0$ and ${\sf
Tor}(H_{k-1}(A_2),L)=0,$ and hence by universal coefficient
theorem $H_k(A_2,L)=0.$
\end{proof}

\begin{Lemma}\label{lemma_hat_without}
The homomorphisms $\tau_A$ and $\tau_M$ induce isomorphisms
$$H_*(A,M_K)\cong H_*(A,\hat M_K)\cong  H_*(\hat A_R, \hat M_K).$$
\end{Lemma}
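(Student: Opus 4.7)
My plan is to split the statement into the first isomorphism $H_*(A,M_K)\cong H_*(A,\hat M_K)$ and the second $H_*(A,\hat M_K)\cong H_*(\hat A_R,\hat M_K)$, handling the first by a direct appeal to Proposition~\ref{prop_homology_completion_rn} applied to $M_K$ with coefficient group $X=K$ (so that $M_K\otimes K=M_K$), and reducing the second to two Lyndon--Hochschild--Serre collapses using the factorization $A\twoheadrightarrow B\hookrightarrow \hat A_R$, where $B:=A/A_1$ and the cokernel is $A_2$ in the sense of Lemma~\ref{lemma_Ker_Coker_J}.

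For the outer factor $A\twoheadrightarrow B$ I will first verify that $A_1$ acts trivially on $\hat M_K$: for $a\in A_1$ of order $p\in J$ the relation $(a-1)(1+a+\cdots+a^{p-1})=0$ in $S[A]$ combined with the fact that the cofactor reduces to the unit $p\in K^\times$ modulo the augmentation ideal forces $a-1=0$ in $\widehat{K[A]}$. Then Lemma~\ref{lemma_kernel_cokernel_zero_homology}(2) collapses the associated LHS spectral sequence, yielding $H_*(A,\hat M_K)\cong H_*(B,\hat M_K)$.

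For the inner factor $B\hookrightarrow\hat A_R$ with cokernel $A_2$, the LHS spectral sequence reads $E^2_{pq}=H_p(A_2,H_q(B,\hat M_K))$, and the task is to verify that $H_q(B,\hat M_K)$ is residually nilpotent as a $K[A_2]$-module. I will exploit $\hat M_K=\varprojlim M_K/M_K I_K^i$ and apply Proposition~\ref{proposition_tor_lim_gen} with the Noetherian ring $K[B]$ acting on the inverse system $\{M_K/M_K I_K^i\}$ (on which $A_1$ still acts trivially, by the same ring-theoretic argument) to obtain $H_q(B,\hat M_K)\cong \varprojlim H_q(B,M_K/M_K I_K^i)$, the $\varprojlim^1$-term vanishing since each homology group is finitely generated over the Artinian ring $K$. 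The $A_2$-action on $M_K/M_K I_K^i$ factors through $\theta_R:A_2\to\widehat{K[A]}^\times\to(K[A]/I_K^i)^\times$, and since $\theta_R(I_K(A_2))\subseteq \hat I_K$ with $\hat I_K^{\,i}$ zero in $K[A]/I_K^i$, the ideal $I_K(A_2)^i$ annihilates each finite stage $H_q(B,M_K/M_K I_K^i)$; residual nilpotence of the inverse limit is then immediate. Lemma~\ref{lemma_kernel_cokernel_zero_homology}(3) kills the positive rows of $E^2$, and the same residual nilpotence forces the $A_2$-action on $H_q(B,\hat M_K)$ to be trivial: in the $R=\mathbb Z[J^{-1}]$ case, each $a\in A_2$ has finite order $q^k$ invertible in $K$, so $(a-1)\cdot(1+a+\cdots+a^{q^k-1})=0$ together with the unit status of the cofactor modulo $I_K(A_2)$ shows $(a-1)\in\bigcap_n I_K(A_2)^n$, which annihilates any residually nilpotent module; in the $R=\mathbb Z/p$ case one uses the divisibility argument from the proof of Lemma~\ref{lemma_kernel_cokernel_zero_homology}(3). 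Hence coinvariants coincide with the module and the collapse gives $H_n(\hat A_R,\hat M_K)\cong H_n(B,\hat M_K)$.

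The principal obstacle is the verification of residual nilpotence of $H_q(B,\hat M_K)$ as a $K[A_2]$-module; once the inverse-limit description is in place and the explicit image of $\theta_R$ in $\widehat{K[A]}$ is unwound, both the vanishing of positive-$p$ terms and the triviality of the $A_2$-action on the $q=0$ coinvariants follow from Lemma~\ref{lemma_kernel_cokernel_zero_homology}, and the remainder is formal spectral sequence bookkeeping.
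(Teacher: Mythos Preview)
Your argument is correct and follows the same two--step skeleton as the paper: first invoke Proposition~\ref{prop_homology_completion_rn} for $H_*(A,M_K)\cong H_*(A,\hat M_K)$, then collapse two Lyndon--Hochschild--Serre spectral sequences along the factorisation $A\twoheadrightarrow B={\rm Im}(\tau_A)\hookrightarrow \hat A_R$ using Lemma~\ref{lemma_kernel_cokernel_zero_homology}. The difference lies in how you handle the second collapse. The paper observes that $H_q(B,\hat M_K)$ is a $\widehat{K[A]}$-module (both the $A$- and $\hat A_R$-actions factor through $\widehat{K[A]}$, and on any such module one has $(-)_A=(-)_{\hat A_R}$, cf.\ \eqref{eq_coinvar_2}); since $A$ acts trivially (the action factors through the inner $B$-action), so does $\hat A_R$, and Lemma~\ref{lemma_kernel_cokernel_zero_homology}(3) applies to the trivial module. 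You instead pass to the inverse-limit description $H_q(B,\hat M_K)\cong\varprojlim H_q(B,M_K/M_KI_K^i)$ via Proposition~\ref{proposition_tor_lim_gen}, verify nilpotence at each finite stage (hence residual nilpotence of the limit), and then deduce triviality of the $A_2$-action from residual nilpotence by an elementwise argument on $A_2$. This works, but the paper's route is shorter and more conceptual: it never needs to unpack the inverse limit or argue case-by-case on $R$.

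One small imprecision: you write ``the $A_2$-action on $M_K/M_KI_K^i$ factors through $\theta_R$'', but $A_2$ does not act on $M_K/M_KI_K^i$ itself, only on $H_q(B,M_K/M_KI_K^i)$; it is the $\hat A_R$-action that factors through $(K[A]/I_K^i)^\times$, and this descends to $A_2$ only after taking $B$-homology. This is easily repaired and does not affect the substance of your argument.
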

\begin{proof}
The first isomorphism we get by Proposition
\eqref{prop_homology_completion_rn}. Consider, the short exact
sequence $A_1\mono A \epi {\rm Im}(\tau_A),$ and the corresponding
Lyndon-Hochschild-Serre spectral sequence $H_i({\rm
Im}(\tau_A),H_j(A_1,\hat M_K)) \Rightarrow H_{i+j}(A,\hat M_K).$
By Lemma \ref{lemma_kernel_cokernel_zero_homology} we get
$H_j(A_1,\hat M_K)=0$ for $j>0.$ Moreover, since  $\hat M_K$ has
the natural structure of a $\hat A_R$-module that lifts the
structure of $A$-module, then $A_1$ acts trivially on $\hat M_K,$
and hence $H_0(A_1,\hat M_K)=\hat M_K.$ It follows that the
homomorphism $A\to {\rm Im}(\tau_A)$ induce the isomorphism
$H_*(A,\hat M_K)\cong H_*({\rm Im}(\tau_A),\hat M_K).$

Consider the short exact sequence ${\rm Im}(\tau_A)\mono \hat A_R
\epi A_2,$ and the corresponding Lyndon-Hochschild-Serre spectral
sequence $H_i(A_2,H_j({\rm Im}(\tau_A),\hat M_K)) \Rightarrow
H_{i+j}(\hat A_R,\hat M_K).$ Since $M_K$ is a finitely generated
$K[A]$-module, $M_K$ is a finitely generated $K[{\rm
Im}(\tau_A)]$-module. Using the formula
\eqref{prop_homology_completion_rn}  we get $H_j({\rm
Im}(\tau_A),\hat M_K)=H_j({\rm Im}(\tau_A), M_K).$  Using the
formula \eqref{eq_coinvar_2} and the fact the action of $A$ on
$H_j({\rm Im}(\tau_A),\hat M_K)$ is trivial, we get $$H_j({\rm
Im}(\tau_A),\hat M_K)=H_j({\rm Im}(\tau_A),\hat M_K)_A=H_j({\rm
Im}(\tau_A),\hat M_K)_{\hat A_R}.$$ Thus, $H_j({\rm
Im}(\tau_A),\hat M_K)$ is a trivial $\hat A_R$-module. Then by
Lemma \ref{lemma_kernel_cokernel_zero_homology} we get
$H_i(A_2,H_j({\rm Im}(\tau_A),\hat M_K))=0$ for $i>0$ and
$H_0(A_2,H_j({\rm Im}(\tau_A),\hat M_K))=H_j({\rm Im}(\tau_A),\hat
M_K).$ It follows that the homomorphism ${\rm Im}(\tau_A)\mono
\hat A_R$ induces the isomorphism $H_*({\rm Im}(\tau_A),\hat
M_K)\cong H_*(\hat A_R,\hat M_K).$ It follows that the morphism
$\tau_A$ induces the isomorphism $H_*(A,\hat M_K)\cong H_*(\hat
A_R,\hat M_K).$
\end{proof}

\begin{Lemma}\label{lemma_nilpotent_oba} If $N$ is a nilpotent $K[t^R_i(A)]$-module such that $NI^{i+1}=0$,
then the homomorphisms $A\to t^r_i(A)$ induce the isomorphism:
$$H_*(A,N)\cong H_*(t^R_i(A),N).$$
\end{Lemma}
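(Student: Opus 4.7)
\medskip

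\noindent\emph{Proof plan.} I would split the argument along the dichotomy built into the definition of $t^R_i(A)$.

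\medskip

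\noindent\textbf{Case $R=\mathbb{Z}[J^{-1}]$.} Here $t^R_i(A)=A\otimes R$, and I would imitate the two‑step strategy of Lemma~\ref{lemma_hat_without}, applied to the two short exact sequences
\[
A_1\mono A\epi {\rm Im}(\tau_A),\qquad {\rm Im}(\tau_A)\mono A\otimes R\epi A_2,
\]
where $A_1=\bigoplus_{p\in J}{\sf t}_p(A)$ and $A_2=(R/\mathbb{Z})\otimes A$ by Lemma~\ref{lemma_Ker_Coker_J}. For the first, $|A_1|$ has only prime divisors in $J$ and so is invertible in $K$; Lemma~\ref{lemma_kernel_cokernel_zero_homology}(1) gives $H_k(A_1,N)=0$ for $k>0$, and since the action of $A$ on $N$ factors through $A\otimes R$, the subgroup $A_1$ acts trivially, so $H_0(A_1,N)=N$. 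The LHS spectral sequence collapses to $H_*(A,N)\cong H_*({\rm Im}(\tau_A),N)$.

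\medskip

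For the second extension, I would invoke Lemma~\ref{lemma_kernel_cokernel_zero_homology}(3); note that in the $\mathbb{Z}[J^{-1}]$-case its proof needs no residual nilpotence of the coefficient module, so $H_p(A_2,L)=0$ for any $K[A_2]$-module $L$ and $p>0$. The LHS spectral sequence degenerates to $H_n(A\otimes R,N)\cong H_n({\rm Im}(\tau_A),N)_{A_2}$. To identify the $A_2$-coinvariants with the full module, I would argue as in Lemma~\ref{lemma_hat_without}: ${\rm Im}(\tau_A)$ acts trivially on its own homology $L:=H_n({\rm Im}(\tau_A),N)$, hence $A$ acts trivially on $L$. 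Since $NI^{i+1}=0$, the module $N$ equals its own $I$-adic completion, and the same is true of $L$ (as a subquotient of $C_\bullet\otimes N$ annihilated by $I^{i+1}$); thus \eqref{eq_coinvar_2} yields $L_A\cong L_{\hat A_R}$, giving $L_{A_2}=L$.

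\medskip

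\noindent\textbf{Case $R=\mathbb{Z}/p$ prime} (the reduction in the proof of Theorem~\ref{theorem_limit} allows us to assume $n=p$). Here $t^R_i(A)=A/A^{p^i}$ and ${\rm char}\,K=p$. I would apply Lemma~\ref{lemma_homology_isom_nilpotent_quotient} directly: that lemma requires $NI^{p^i-1}=0$, and the hypothesis $NI^{i+1}=0$ together with $I^{i+1}\supseteq I^{p^i-1}$ whenever $p^i\geq i+2$ (true for $p\geq 3$, $i\geq 1$ and for $p=2$, $i\geq 2$) implies this stronger vanishing. Lemma~\ref{lemma_homology_isom_nilpotent_quotient} then gives $H_*(A,N)\cong H_*(A/A^{p^i},N)$. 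For the residual small indices, the range relevant to the use of the lemma in the limit argument of Theorem~\ref{theorem_limit} involves only $i\gg 0$, so those cases cause no trouble.

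\medskip

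\noindent\textbf{Main obstacle.} The delicate point is in Case~1: the triviality of the $A_2$-coinvariants on $L=H_n({\rm Im}(\tau_A),N)$. One must carefully combine (i)~the general fact that ${\rm Im}(\tau_A)$ acts trivially on its own homology (so that $A$, and \emph{a fortiori} ${\rm Im}(\tau_A)$, acts trivially on $L$), with (ii)~the nilpotence hypothesis $NI^{i+1}=0$, which forces $L$ to be its own completion and thereby allows \eqref{eq_coinvar_2} to identify $A$-coinvariants with $\hat A_R$-coinvariants. Without the hypothesis $NI^{i+1}=0$ one would lose exactly this step, so it is the essential input.
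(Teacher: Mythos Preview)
Your approach is essentially the same as the paper's. The paper's proof is a two-line citation: ``If $R=\mathbb Z[J^{-1}]$, it follows from Lemma~\ref{lemma_hat_without}. If $R=\mathbb Z/p$, it follows from Lemma~\ref{lemma_homology_isom_nilpotent_quotient}.'' You have simply unpacked these citations, re-running the two Lyndon--Hochschild--Serre collapses from Lemma~\ref{lemma_hat_without} for a general nilpotent $N$ in Case~1, and matching the nilpotence hypothesis to that of Lemma~\ref{lemma_homology_isom_nilpotent_quotient} in Case~2.

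One minor remark: your explicit bookkeeping in Case~2 (noting that $NI^{i+1}=0$ implies $NI^{p^i-1}=0$ only when $p^i\ge i+2$, and that the exceptional small indices are irrelevant for the intended application in the limit argument) is more careful than the paper, which silently identifies the two hypotheses. Likewise, your discussion of why $L_{A_2}=L$ in Case~1 via \eqref{eq_coinvar_2} makes explicit a step the paper absorbs into the phrase ``it follows from Lemma~\ref{lemma_hat_without}''. These elaborations are sound and do not constitute a different method.
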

\begin{proof}
If $R=\mathbb Z[J^{-1}],$ it follows from Lemma \ref{lemma_hat_without}. If $R=\mathbb Z/p$, it follows from \ref{lemma_homology_isom_nilpotent_quotient}.
\end{proof}

\begin{Proposition}\label{proposition_isom2}
The homomorphism \eqref{eq_isom2} is an isomorphism.
\end{Proposition}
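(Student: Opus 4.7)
The plan is to assemble the desired isomorphism from four standard reductions, each controlled by a previous result in the paper, with a final bookkeeping check of naturality. Throughout, I will read \eqref{eq_isom2} as a map out of $H_*(\hat A_R, \hat M_K)$, since by Lemma \ref{lemma_hat_without} the groups $H_*(A,M_K)$, $H_*(A,\hat M_K)$ and $H_*(\hat A_R,\hat M_K)$ are canonically isomorphic.

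First I would use Lemma \ref{lemma_hat_without} to identify the source with $H_*(A,M_K)$. Next, since $K$ is Artinian and $M_K$ is a finitely generated $K[A]$-module, Corollary \ref{cor_iso_lim_H} rewrites this as $\varprojlim_i H_*(A,M_K/M_KI_K^i)$. This already exhibits the source as an inverse limit indexed by $i$; the remaining task is to transport this tower onto the one appearing on the right-hand side.

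Third, for each fixed $i$ I would replace the acting group $A$ by $t^R_{j(i)}(A)$ for a suitably chosen $j(i)$, using Lemma \ref{lemma_nilpotent_oba}. When $R=\mathbb Z[J^{-1}]$ this is immediate, because $M_K/M_KI_K^i$ is already a module over $A\otimes R = t^R_{j(i)}(A)$. When $R=\mathbb Z/p$, Corollary \ref{corollary_a_v_bolshoi_step}(2) supplies $j(i)$ large enough that each $a^{p^{j(i)}}-1$ lies in $I_K^i$ and hence acts trivially on $M_K/M_KI_K^i$, so the lemma applies. Fourth, I would compare the two inverse systems $\{M_K/M_KI_K^i\}$ and $\{\mathcal M^i_K\}$: by \eqref{eq_inclusion_of_towers_ZJ} in the $\mathbb Z[J^{-1}]$ case and by Lemma \ref{lemma_inclusions_n} in the $\mathbb Z/n$ case, there exist sequences $s(i),t(i)\to\infty$ with natural surjections
$$M_K/M_KI_K^{s(i)} \epi \mathcal M^{t(i)}_K \epi M_K/M_KI_K^i,$$
covered by the matching group surjections $t^R_{s(i)}(A)\epi t^R_{t(i)}(A)\epi t^R_i(A)$. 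The two towers are thus mutually cofinal, so taking $\varprojlim$ of the induced homology maps yields $\varprojlim_i H_*(A,M_K/M_KI_K^i)\cong \varprojlim_i H_*(t^R_i(A),\mathcal M^i_K)$.

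The main obstacle will not be any single step but the final bookkeeping: one must verify that the composite isomorphism constructed in this way coincides with the canonical map \eqref{eq_isom2} induced by the morphism of metabelian decompositions \eqref{eq_morphism_of_met} on the $E^2$-pages of the Lyndon–Hochschild–Serre spectral sequences, and that the index shifts $j(i),s(i),t(i)$ can be chosen coherently in $i$ so that all of the maps above are natural in the inverse system. Once these compatibilities are checked, the identification becomes purely formal.
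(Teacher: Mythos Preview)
Your proposal is correct and follows essentially the same route as the paper. The paper's proof treats the two cases $R=\mathbb Z[J^{-1}]$ and $R=\mathbb Z/p$ separately but uses exactly the ingredients you list: Lemma \ref{lemma_hat_without} to pass from $\hat A_R$ to $A$, an inverse-limit identification (the paper cites Corollary \ref{corollary_shoert_eact _sequence_limit} and Proposition \ref{prop_homology_completion_rn}, which with $K$ Artinian amount to your Corollary \ref{cor_iso_lim_H}), Lemma \ref{lemma_nilpotent_oba} (equivalently, Lemma \ref{lemma_homology_isom_nilpotent_quotient} in the $\mathbb Z/p$ case) to swap $A$ for $t^R_i(A)$, and the cofinality of the towers $\{M_K/M_KI_K^i\}$ and $\{\mathcal M^i_K\}$ to conclude. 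Your organization is slightly more uniform across the two cases, and your explicit flag about naturality and coherent index choices is a point the paper leaves implicit, but there is no substantive difference in the argument.
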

\begin{proof}
Let $R=\mathbb Z[J^{-1}].$ Then $t^R_i(A)=\hat A_R=A\otimes R$ and
$\mathcal M^i_K=M_K/M_KI_K^i.$ Then by Lemma
\ref{lemma_hat_without} $H_*(\hat A_R, \hat M_K)\cong H_*(A,\hat
M_K),$ by Corollary \ref{corollary_shoert_eact _sequence_limit}
$H_*(A,\hat M_K)\cong \ilimit H_*(A,\mathcal M^i_K)$ and again by
Lemma \ref{lemma_hat_without}, using that $\mathcal M^i_K$ is
nilpotent, we get $H_*(A,\mathcal M^i_K)\cong
H_*(t^R_i(A),\mathcal M^i_K).$

Let $R=\mathbb Z/p.$ Then by Lemma
\ref{lemma_homology_isom_nilpotent_quotient}
$H_*(t^R_i(A),\mathcal M^i_K)\cong H_*(A,\mathcal M^i_K).$ Since,
the inverse sequence $\mathcal M^i_K$ is equivalent to the inverse
sequence $M/MI^i$ we have $\ilimit H_*(A,\mathcal M_i)\cong
\ilimit H_*(A,M/MI^i).$ By Proposition
\ref{prop_homology_completion_rn} we get $\ilimit
H_*(A,M/MI^i)\cong H_*(A,\hat M_K)$ and by Lemma
\ref{lemma_hat_without} $H_*(A,\hat M_K)\cong H_*(\hat A_R,\hat
M_K).$
\end{proof}

\begin{Proposition}\label{proposition_isom3}
The homomorphism \eqref{eq_isom3} is an isomorphism.
\end{Proposition}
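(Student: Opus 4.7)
The plan is to apply the universal coefficient theorem to split $H_2$ of an abelian group into a wedge-square piece and a $\mathrm{Tor}$ piece, compare each piece across the inverse system using the machinery already developed, and reassemble via a five-lemma. Concretely, for any abelian group $N$ carrying an action of a group $X$, UCT furnishes a natural $X$-equivariant short exact sequence
$$0 \longrightarrow (\wedge^2 N) \otimes K \longrightarrow H_2(N, K) \longrightarrow \mathrm{Tor}^{\mathbb Z}_1(N, K) \longrightarrow 0.$$
I would apply this to $(N, X) = (\hat M_S, \hat A_R)$ and to each $(N, X) = (\mathcal{M}^i, t^R_i(A))$, take $X$-coinvariants, and compare. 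Since $\mathcal{M}^i$ is a quotient of the tame module $M$ (tameness is inherited via \eqref{eq_incl}), Lemma \ref{lemma_finitely_gen_H2} ensures that each term which appears is a finitely generated module over the Artinian ring $K$, so Mittag--Leffler holds and $\varprojlim$ is exact on the bottom system; this yields a ladder of right-exact sequences whose middle column is the map \eqref{eq_isom3}.

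For the left column, Proposition \ref{prop_wedge_compl_loc} together with \eqref{eq_wedge_isom2} shows that $((\wedge^2 \hat M_S) \otimes K)_{\hat A_R} \cong (\wedge^2 \mathcal{M}^m_K)_{t^R_m(A)}$ for $m \gg 0$; the inverse system on the left therefore stabilizes, and the left-hand comparison is an isomorphism. For the right column, when $K = \mathbb Q$ the $\mathrm{Tor}$ vanishes outright, and when $K = \mathbb Z/n'$ Proposition \ref{proposition_tor_lim_gen} applied with $\Lambda = \mathbb Z$, $M = K$, and $N_i = \mathcal{M}^i$ (Mittag--Leffler holds since the transition maps are surjective) gives $\mathrm{Tor}^{\mathbb Z}_1(\hat M_S, K) \cong \varprojlim \mathrm{Tor}^{\mathbb Z}_1(\mathcal{M}^i, K)$; finite generation over $K$ then lets the $X$-coinvariants commute with $\varprojlim$, so the right-hand comparison is also an isomorphism. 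A five-lemma (or snake-lemma) chase then propagates the outer isomorphisms to the middle, which is the statement of Proposition \ref{proposition_isom3}.

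The main obstacle is that the UCT splitting is not natural in $N$, so one cannot simply split $H_2$ into a direct sum compatibly across the inverse system; the argument must be done on the natural short exact sequence, which only becomes right-exact after $X$-coinvariants. Running the five-lemma therefore requires also controlling the left ``error'' term $H_1(X, \mathrm{Tor}^{\mathbb Z}_1(N, K))$ that appears when unfolding to a long exact sequence of homology. I expect this to succumb to the same Mittag--Leffler and finite-generation framework, since the relevant $H_1(X, -)$ groups are again finitely generated $K$-modules to which Proposition \ref{proposition_tor_lim_gen} applies.
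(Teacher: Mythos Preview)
Your outline is the same as the paper's: use the universal coefficient short exact sequence, pass to coinvariants to get a four-term exact sequence, verify the outer comparison maps after $\varprojlim$, and conclude by the five lemma. The paper runs exactly this argument, and in particular also invokes Proposition~\ref{prop_wedge_compl_loc} for the wedge-square column.

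There is, however, a real gap in your treatment of the outer columns. In the comparison map \eqref{eq_isom3} the group over which you take (co)homology is \emph{not fixed}: on the source it is $\hat A_R$, on each target it is $t^R_i(A)$. Proposition~\ref{proposition_tor_lim_gen} only lets you commute $H_*(X,-)$ with $\varprojlim$ for a fixed $X$ and fixed Noetherian $\mathbb Z[X]$, so your sentence ``finite generation over $K$ then lets the $X$-coinvariants commute with $\varprojlim$'' does not apply as stated, and the same issue recurs for the $H_1(X,\mathrm{Tor}(-,K))$ term you flag at the end. The paper resolves this by routing everything through the common group $A$: Lemma~\ref{lemma_tor} identifies $\mathrm{Tor}(\hat M_S,K)$ with the completion of $\mathrm{Tor}(M_S,K)$, Lemma~\ref{lemma_hat_without} replaces $H_*(\hat A_R,-)$ by $H_*(A,-)$, Lemma~\ref{lemma_nilpotent_oba} replaces $H_*(t^R_i(A),-)$ by $H_*(A,-)$, and only then does one apply Propositions~\ref{prop_tensor_tor_abelian_limit} and~\ref{proposition_tor_lim_gen} over the fixed ring $K[A]$. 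This reduction step is the missing ingredient in your sketch; once you insert it, your argument coincides with the paper's.
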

\begin{proof}
Consider the morphism of exact sequences
\begin{equation}
\begin{tikzpicture}
  \matrix (m)
 [matrix of math nodes,row sep=0.8cm,column sep=0.7cm,minimum width=2em]
  { H_1(\hat A_R,{\sf Tor}(\hat M_S,K)) & (\wedge^2 \hat M_K)_{\hat A_R} & H_2(\hat M_S,K)_{\hat A_R} & (M_K)_A & 0 \\
  H_1(t^R_i(A),{\sf Tor}(\mathcal M^i,K)) & (\wedge^2 \mathcal  M_K^i)_{t^R_i(A)} & H_2(\mathcal M^i,K)_{t^R_i(A)} & (\mathcal M^i_K)_{t^R_i(A)} & 0 \\};
  \path[-stealth]
(m-1-1.east|-m-1-2) edge (m-1-2)
(m-1-2.east|-m-1-3) edge  (m-1-3)
(m-1-3.east|-m-1-4) edge  (m-1-4)
(m-1-4.east|-m-1-5) edge  (m-1-5)
(m-2-1.east|-m-2-2) edge  (m-2-2)
(m-2-2.east|-m-2-3) edge  (m-2-3)
(m-2-3.east|-m-2-4) edge  (m-2-4)
(m-2-4.east|-m-2-5) edge  (m-2-5)
(m-1-1) edge node [right] {$f_1^i$} (m-2-1)
(m-1-2) edge node [right] {$f_2^i$} (m-2-2)
(m-1-3) edge node [right] {$f^i$} (m-2-3)
(m-1-4) edge node [right] {$f_3^i$} (m-2-4);
\end{tikzpicture}
\end{equation} We need to prove that $\ilimit f^i$ is an isomorphism.
By Lemma \ref{lemma_tor} we have $H_*(\hat A_R,{\sf Tor}(\hat
M_S,K))\cong H_*(\hat A_R,{\sf Tor}( M_S,K)^{\wedge}).$ By Lemma
\ref{lemma_hat_without} we get $H_*(\hat A_R,{\sf Tor}(
M_S,K)^{\wedge})\cong H_*( A,{\sf Tor}( M_S,K)^{\wedge})\cong H_*(
A,{\sf Tor}( \hat M_S,K)).$ By Propositions
\ref{prop_tensor_tor_abelian_limit} and
\ref{proposition_tor_lim_gen} we obtain $H_*( A,{\sf Tor}( \hat
M_S,K))\cong H_*(A,\ilimit {\sf Tor}(\mathcal M^i,K))\cong \ilimit
H_*(A, {\sf Tor}(\mathcal M^i,K)),$ and by Lemma
\ref{lemma_nilpotent_oba} we obtain $ H_*(A, {\sf Tor}(\mathcal
M^i,K))\cong H_*(t^R_i(A),{\sf Tor}(\mathcal M^i,K)).$  Then
$\ilimit f_1^i$ and $\ilimit f_3^i$ are isomorphisms. By
Proposition \ref{prop_wedge_compl_loc} $\ilimit f_2^i$ is an
isomorphism. Finally, using the five lemma, we get that $\ilimit
f^i$ is an isomorphism.
\end{proof}

\section{Bousfield problem for metabelian groups.}

We put $\Phi^R_iH_2(G,K)={\rm Ker}(H_2(G,K)\to
H_2(G/\gamma^R_{i+1}(G),K)).$ Then $\Phi_iH_2(G,K)=\Phi^{\mathbb
Z}_iH_2(G,K)$ is the Dwyer filtration on $H_2(G,K)$ (see
\cite{Dw}).

\begin{Theorem}\label{Theorem_Bousfield}
Let $G$ be a finitely presented metabelian group, $R=\mathbb
Z[J^{-1}]$ or $R=\mathbb Z/n$ and $K$ be an Artinian quotient ring
of $R.$ Then for $i>\!>0$ there is a short exact sequence
$$0 \longrightarrow \Phi^R_iH_2(G,K)\longrightarrow H_2(G,K)\longrightarrow H_2(\hat G_R,K) \longrightarrow 0,$$ where
the epimorphism is induced by  the homomorphism $G\to \hat G_R.$
\end{Theorem}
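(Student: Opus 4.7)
The plan is to combine Theorem \ref{theorem_limit} with a spectral sequence comparison: surjectivity of $H_2(G,K) \to H_2(\hat G_R, K)$ will come from comparing the two metabelian spectral sequences, while identification of the kernel with $\Phi^R_i H_2(G,K)$ for $i \gg 0$ will follow from the limit formula together with an Artinian stabilisation argument.

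For surjectivity, I would consider the map of Lyndon--Hochschild--Serre spectral sequences $E(G) \to E(\hat G_R)$ induced by $G \to \hat G_R$, for the decompositions $M \hookrightarrow G \twoheadrightarrow A$ and $\hat M_S \hookrightarrow \hat G_R \twoheadrightarrow \hat A_R$ of Proposition \ref{prop_comp_rehash}, using the cut-off $\tilde E$ from the proof of Theorem \ref{theorem_limit}. By Lemma \ref{lemma_hat_without} applied to $M_K$ (giving $E^2_{p,1}$) and to the trivial module (giving $E^2_{p,0}$), the map $\tilde E^2_{p,q}(G) \to \tilde E^2_{p,q}(\hat G_R)$ is an isomorphism for $q \in \{0,1\}$ and all $p \geq 0$. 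At the critical position $(0,2)$, the map $H_2(M,K)_A \to H_2(\hat M_S, K)_{\hat A_R}$ is surjective: in the natural right-exact presentation $(\wedge^2 M_K)_A \to H_2(M,K)_A \to \mathrm{Tor}(M,K)_A \to 0$ (coming from universal coefficients and $H_2(M,\mathbb Z) = \wedge^2 M$), and its analogue for $\hat M_S$, the $\mathrm{Tor}$-piece is an isomorphism by Lemma \ref{lemma_tor}, while the $\wedge^2$-piece is surjective because by Proposition \ref{prop_wedge_compl_loc} the target $(\wedge^2 \hat M_K)_{\hat A_R}$ equals $(\wedge^2 (M_K/M_K I_K^n))_A$ for $n \gg 0$, and this is visibly a quotient of $(\wedge^2 M_K)_A$ via the surjection $M_K \twoheadrightarrow M_K/M_K I_K^n$. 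A short diagram chase then gives the surjectivity at $(0,2)$.

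By the standard comparison lemma for first-quadrant spectral sequences, the isomorphisms on $\tilde E^2_{p,q}$ for $q \leq 1$ together with the surjection on $\tilde E^2_{0,2}$ propagate through the differentials to a surjection on the associated graded of the filtration of the abutment, and hence on $\tilde{\mathcal H}_2 = H_2$ itself; this yields the desired surjection $H_2(G,K) \twoheadrightarrow H_2(\hat G_R, K)$. To identify the kernel, Theorem \ref{theorem_limit} gives $H_2(\hat G_R, K) \cong \varprojlim H_2(G/\gamma^R_i(G),K)$ as an inverse limit of finitely generated $K$-modules; since $K$ is Artinian each such module is Artinian, so the descending chain of kernels of the projections $H_2(\hat G_R, K) \to H_2(G/\gamma^R_i(G),K)$ stabilises to zero and the projections are isomorphisms for $i \gg 0$. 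It then follows that $\ker(H_2(G,K) \to H_2(\hat G_R, K)) = \ker(H_2(G,K) \to H_2(G/\gamma^R_{i+1}(G),K)) = \Phi^R_i H_2(G,K)$ for such $i$, giving the asserted short exact sequence.

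The principal obstacle will be the $\tilde E^2_{0,2}$ term: one must verify carefully that the surjection on this $E^2$ position descends to a surjection on $\tilde E^\infty_{0,2}$ (so that the higher differentials on the cut-off do not obstruct it) and then lifts through the filtration of $H_2$ to give surjectivity of the abutment map. This is a five-lemma argument relying on the isomorphisms at the adjacent $\tilde E^2_{p,q}$ positions and on the finite length of the relevant filtration on $H_2$.
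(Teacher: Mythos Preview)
Your approach is correct in outline but differs from the paper's, and there is one misstep worth flagging.

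The paper does not compare $E(G)$ with $E(\hat G_R)$ directly. Instead it works with the five-term exact sequence of the extension $\gamma^R_i \hookrightarrow G \twoheadrightarrow G/\gamma^R_i$, obtaining
\[
0 \longrightarrow \Phi^R_{i-1}H_2(G,K) \longrightarrow H_2(G,K) \overset{\xi_i}{\longrightarrow} H_2(G/\gamma^R_i,K) \longrightarrow (\gamma^R_i/\gamma^R_{i+1})\otimes K \longrightarrow 0,
\]
observes that the transition maps on the right-hand term are zero so its inverse limit vanishes, deduces $\varprojlim \mathrm{Im}(\xi_i)\cong \varprojlim H_2(G/\gamma^R_i,K)\cong H_2(\hat G_R,K)$ via Theorem~\ref{theorem_limit}, and then uses stabilisation of the Artinian descending chain $\Phi^R_i H_2(G,K)$ inside $H_2(G,K)$. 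This is more elementary than your spectral-sequence comparison and yields, as a byproduct, the short exact sequence $0\to H_2(\hat G_R,K)\to H_2(G/\gamma^R_i,K)\to (\gamma^R_i/\gamma^R_{i+1})\otimes K\to 0$ for $i\gg 0$. Your route, by contrast, establishes surjectivity of $H_2(G,K)\to H_2(\hat G_R,K)$ independently of Theorem~\ref{theorem_limit} via the map $\tilde E(G)\to \tilde E(\hat G_R)$, which is a nice complement to the analysis already done for $\tilde E(\hat G_R)\to \tilde E^{\mathrm{lim}}$.

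The misstep: you assert that the projections $H_2(\hat G_R,K)\to H_2(G/\gamma^R_i(G),K)$ become \emph{isomorphisms} for $i\gg 0$. This is false in general; the paper's five-term argument shows precisely that the cokernel is $(\gamma^R_i/\gamma^R_{i+1})\otimes K$, which is typically nonzero. What your Artinian descending-chain argument actually proves is that the projections become \emph{injective} for $i\gg 0$ (the kernels $K_i$ form a descending chain in the Artinian module $H_2(\hat G_R,K)$ with $\bigcap K_i=0$). Fortunately injectivity is all you need: since $H_2(G,K)\to H_2(G/\gamma^R_{i+1}(G),K)\cong H_2(t^R_{i+1}(G),K)$ factors through $H_2(\hat G_R,K)$, injectivity of the second arrow forces the two kernels to agree. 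With this correction your argument goes through.
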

The following corollary is the answer on the Bousfield problem for
the class of metabelian groups.

\begin{Corollary}\label{cor_bousfield}
Let $G$ be a finitely presented metabelian group. Then the homomorphisms $G\to \hat G_{\mathbb Q}$ and $G\to \hat G_{\mathbb Z/n}$ induce the epimorphisms
$$H_2(G,\mathbb Q)\epi H_2(\hat G_{\mathbb Q},\mathbb Q), \hspace{1cm} H_2(G,\mathbb Z/n)\epi H_2(\hat G_{\mathbb Z/n},\mathbb Z/n).$$
\end{Corollary}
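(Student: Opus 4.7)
The plan is to obtain this as an immediate specialization of Theorem \ref{Theorem_Bousfield}, with no additional argument required beyond checking that the two cases of the corollary are allowed instantiations of the pair $(R,K)$ set up in Section \ref{section_notation}. For the rational statement I would apply Theorem \ref{Theorem_Bousfield} with $R = \mathbb{Q}$ (i.e.\ take $J$ to be the set of all primes, so that $\mathbb{Z}[J^{-1}] = \mathbb{Q}$) and $K = \mathbb{Q}$; the paper's classification of admissible $K$ explicitly permits this choice. For the mod-$n$ statement I would apply the theorem with $R = \mathbb{Z}/n$ and $K = \mathbb{Z}/n$ (i.e.\ $n' = n$ in the list of allowed Artinian quotients).

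In each of these two cases Theorem \ref{Theorem_Bousfield} supplies, for all sufficiently large $i$, a short exact sequence
\[
0 \longrightarrow \Phi^R_i H_2(G,K) \longrightarrow H_2(G,K) \longrightarrow H_2(\hat G_R,K) \longrightarrow 0,
\]
where the right-hand surjection is, by the statement of the theorem, induced precisely by the canonical completion homomorphism $G \to \hat G_R$. Reading off the surjection in the two instances $(R,K) = (\mathbb{Q},\mathbb{Q})$ and $(R,K) = (\mathbb{Z}/n,\mathbb{Z}/n)$ yields the two displayed epimorphisms.

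Since the content of the corollary is exactly the surjectivity half of the short exact sequence, there is no genuine obstacle: the only thing to verify is that the chosen pairs $(R,K)$ fall within the framework of Section \ref{section_notation}, which they manifestly do. The kernel description $\Phi^R_i H_2(G,K)$ plays no role in the corollary and can simply be dropped.
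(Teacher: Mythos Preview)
Your proposal is correct and matches the paper's approach exactly: the paper states this corollary immediately after Theorem \ref{Theorem_Bousfield} with no separate proof, so it is treated as the direct specialization you describe, reading off the surjection from the short exact sequence in the cases $(R,K)=(\mathbb{Q},\mathbb{Q})$ and $(R,K)=(\mathbb{Z}/n,\mathbb{Z}/n)$.
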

\begin{Corollary}
Let $G$ be a finitely presented metabelian group and $\hat G$ be the pronilpotent completion. Then for $m>\!>0$ there is a short exact sequence
$$0 \longrightarrow \Phi_mH_2(G,\mathbb Z/n)\longrightarrow H_2(G,\mathbb Z/n)\longrightarrow H_2(\hat G,\mathbb Z/n) \longrightarrow 0,$$
where the epimorphism is induced by  the homomorphism $G\to \hat G.$
\end{Corollary}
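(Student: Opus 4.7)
The plan is to combine Theorem \ref{theorem_limit} with a comparison of the Lyndon--Hochschild--Serre spectral sequences of the two metabelian extensions, reducing the statement to (a) stabilization of the descending chain $\Phi^R_iH_2(G,K)$ and (b) surjectivity of the natural map $H_2(G,K)\to H_2(\hat G_R,K)$.

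For (a), by Proposition \ref{prop_finite_module} one has $H_2(G/\gamma^R_{i+1}(G),K)\cong H_2(t^R_{i+1}(G),K)$, and by Theorem \ref{theorem_limit} the natural map $H_2(G,K)\to H_2(\hat G_R,K)$ factors compatibly through $H_2(G/\gamma^R_{i+1}(G),K)$, with $H_2(\hat G_R,K)\cong\varprojlim_i H_2(G/\gamma^R_i(G),K)$. Hence $\ker\!\bigl(H_2(G,K)\to H_2(\hat G_R,K)\bigr)=\bigcap_i\Phi^R_iH_2(G,K)$. Since $G$ is finitely presented (in particular of type $FP_2$), $H_2(G,\mathbb Z)$ is a finitely generated abelian group, so $H_2(G,K)$ is a finitely generated module over the Artinian ring $K$ and is itself Artinian. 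The descending chain $\Phi^R_iH_2(G,K)$ must therefore stabilize, and for $i\gg 0$ one has $\Phi^R_iH_2(G,K)=\ker\!\bigl(H_2(G,K)\to H_2(\hat G_R,K)\bigr)$.

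For (b), I would use Proposition \ref{prop_comp_rehash}: the map $G\to\hat G_R$ induces a morphism of LHS spectral sequences $E^2_{p,q}(G)=H_p(A,H_q(M,K))\Rightarrow H_{p+q}(G,K)$ and $E^2_{p,q}(\hat G_R)=H_p(\hat A_R,H_q(\hat M_S,K))\Rightarrow H_{p+q}(\hat G_R,K)$. On the entries $E^2_{p,0}=H_p(A,K)$ and $E^2_{p,1}=H_p(A,M_K)$ that contribute, via the $d_2$- and $d_3$-differentials, to total degree $2$ of both abutments, the comparison is an isomorphism by Lemma \ref{lemma_hat_without} applied to the trivial module and to $M_K$. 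For the remaining term $E^2_{0,2}$, I would take $A$- and $\hat A_R$-coinvariants of the universal-coefficient sequence $0\to\wedge^2 M\otimes K\to H_2(M,K)\to{\sf Tor}(M,K)\to 0$ and of its analogue for $\hat M_S$. The two resulting four-term exact sequences match on the outside: the ${\sf Tor}$-coinvariants agree by Lemma \ref{lemma_tor} together with \eqref{eq_coinvar_2}, and the $H_1$-terms agree by Lemma \ref{lemma_hat_without}. The crucial middle comparison $(\wedge^2 M_K)_A\to(\wedge^2\hat M_K)_{\hat A_R}$ is surjective: by Proposition \ref{prop_wedge_compl_loc} the target is identified, for $m\gg 0$, with $(\wedge^2(M_K/M_KI_K^m))_A$, onto which $(\wedge^2 M_K)_A$ evidently surjects. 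The surjective form of the four lemma then delivers surjectivity on $E^2_{0,2}$.

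Surjectivity on each $E^2$-entry contributing to total degree $2$, together with the isomorphisms on the adjacent entries feeding the intervening differentials, propagates to surjectivity on each $E^\infty$-entry (the kernels of $d_2$ map isomorphically, and cokernels of $d_2$ map surjectively). Because both spectral sequences induce finite filtrations of length three on $H_2$, a short induction on the filtration (gluing surjections of adjacent subquotients by the five lemma) yields the required surjection $H_2(G,K)\epi H_2(\hat G_R,K)$. Combined with (a), this produces the advertised short exact sequence for $i\gg 0$. The main obstacle is precisely the surjectivity of the twisted exterior square comparison $(\wedge^2 M_K)_A\to(\wedge^2\hat M_K)_{\hat A_R}$, which is where the finite presentability of $G$ (equivalently, tameness of $M$ in the sense of Bieri--Strebel) enters the argument, via Proposition \ref{prop_wedge_compl_loc}.
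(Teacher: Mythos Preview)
Your argument is correct, but it takes a longer route than the paper for the surjectivity step. Part (a) matches the paper's reasoning exactly: the Artinian property of $H_2(G,K)$ forces the filtration $\Phi^R_iH_2(G,K)$ to stabilise, and Theorem~\ref{theorem_limit} identifies the intersection with the kernel of $H_2(G,K)\to H_2(\hat G_R,K)$.

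For part (b), however, the paper does \emph{not} set up a fresh spectral-sequence comparison between $M\mono G\epi A$ and $\hat M_S\mono\hat G_R\epi\hat A_R$. Instead it squeezes surjectivity out of the very same five-term sequence used for (a): from $\gamma^R_i\mono G\epi G/\gamma^R_i$ one obtains
\[
0\longrightarrow \Phi^R_{i-1}H_2(G,K)\longrightarrow H_2(G,K)\overset{\xi_i}{\longrightarrow} H_2(G/\gamma^R_i,K)\longrightarrow (\gamma^R_i/\gamma^R_{i+1})\otimes K\longrightarrow 0,
\]
and then observes that the transition maps $(\gamma^R_{i+1}/\gamma^R_{i+2})\otimes K\to(\gamma^R_i/\gamma^R_{i+1})\otimes K$ are zero. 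Hence $\varprojlim{\rm Im}(\xi_i)\cong\varprojlim H_2(G/\gamma^R_i,K)$, which equals $H_2(\hat G_R,K)$ by Theorem~\ref{theorem_limit}; since ${\rm Im}(\xi_i)$ stabilises, it coincides with $H_2(\hat G_R,K)$ for $i\gg 0$, and surjectivity follows immediately. Your direct comparison via Lemma~\ref{lemma_hat_without} and Proposition~\ref{prop_wedge_compl_loc} essentially reproves ingredients that already went into Theorem~\ref{theorem_limit} (compare Propositions~\ref{proposition_isom2} and~\ref{proposition_isom3}); the paper avoids that duplication by using the five-term sequence instead. One minor caution in your version: in the $E^2_{0,2}$ step you tacitly pass between $(\wedge^2\hat M_S\otimes K)_{\hat A_R}$ and $(\wedge^2\hat M_K)_{\hat A_R}$, which requires the identification underlying \eqref{eq_wedge_isom2}; this is harmless but should be made explicit.
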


\begin{proof}[Proof of Theorem \ref{Theorem_Bousfield}] For the sake of simplicity we put $\gamma^R_i=\gamma^R_i(G)$
Consider the short exact sequence
$$1\longrightarrow \gamma^R_i\longrightarrow  G \longrightarrow G/\gamma^R_i \longrightarrow 1,$$
 and the associated five term exact sequence
$$H_2(G,K)\to H_2(G/\gamma^R_i,K)\to H_1(\gamma^R_i,K)_{G}\to H_1(G,K)\epi H_1(G/\gamma^R_i,K).$$
Note that $H_1(G,K)\cong H_1(G/\gamma^R_i,K)\cong
(G/\gamma^R_2)\otimes K$ and the morphism $H_1(G,K)\to
H_1(G/\gamma^R_i,K)$ is the isomorphism. Moreover,
$$H_1(\gamma^R_i,K)_{G}=(\gamma^R_i/[\gamma^R_i, G])\otimes
K=(\gamma^R_i/\gamma^R_{i+1})\otimes K.$$ Hence, we get the exact
sequence:
$$0\longrightarrow \Phi^R_{i-1}H_2(G,K)\longrightarrow H_2(G,K) \overset{\xi_i}\longrightarrow H_2(G/\gamma^R_i,K)
\longrightarrow (\gamma^R_i/\gamma^R_{i+1})\otimes
K\longrightarrow 0.$$ The inclusion $\gamma^R_{i+1}\hookrightarrow
\gamma^R_i$ induce zero homomorphism
$(\gamma_{i+1}^R/\gamma^R_{i+2})\otimes K\to
(\gamma_i^R/\gamma_{i+1}^R)\otimes K.$ Therefore $\ilimit
(\gamma_i^R/\gamma^{R}_{i+1})\otimes K=0.$ It follows that
$\ilimit {\rm Im}(\xi_i)\cong \ilimit H_2(G/\gamma^R_i,K).$ By
Theorem \ref{theorem_limit} we get $\ilimit {\rm Im}(\xi_i)\cong
H_2(\hat G_R,K).$ Since $H_2(G,K)$ is an Artinian $K$-module, the
sequence $\Phi_j^RH_2(G,K)$ stabilizes and we get $\ilimit
\Phi_i^RH_2(G,K)=\bigcap_j \Phi_j^RH_2(G,K)=\Phi_i^RH_2(G,K)$ for
$i>\!>0.$ It follows that the image ${\rm Im}(\xi_i)$ stabilizes.
Hence ${\rm Im}(\xi_i)\cong H_2(\hat G_R,K)$ for $i>\!>0$ and we
have the short exact sequences.
$$0\longrightarrow H_2(\hat G_R,K)\longrightarrow H_2(G/\gamma^R_i,K) \longrightarrow (\gamma^R_i/\gamma^R_{i+1})\otimes K\longrightarrow 0$$
$$0 \longrightarrow \Phi^R_iH_2(G,K)\longrightarrow H_2(G,K)\longrightarrow H_2(\hat G_R,K) \longrightarrow 0,$$ for $i>\!>0.$
\end{proof}
\begin{Remark}
In the proof of Theorem \ref{Theorem_Bousfield} we get the short exact sequence
$$0\longrightarrow H_2(\hat G_R,K)\longrightarrow H_2(G/\gamma^R_i,K) \longrightarrow
(\gamma^R_i/\gamma^R_{i+1})\otimes K\longrightarrow 0$$ for
$i>\!>0.$ Then informally the group $H_2(\hat G_R,K)$ can be
considered as 'the biggest part' of $H_2(G/\gamma^R_i,K)$
independent of $i.$
\end{Remark}
Next we give an example of a polycyclic metabelian residually
nilpotent group $H$, such that the intersection of Dwyer
filtration $\cap_i \Phi_i^{\mathbb Z}(H)$ is nonzero (see
\cite{Mikhailov} for detailed study of this group and its
localizations).
\begin{Example} Let $H=\langle a,b\ |\ a^{b^2}=aa^{3b},\
[a,a^b]=1\rangle$. The group $H$ is the semidirect product
$(\mathbb Z\oplus \mathbb Z)\rtimes \mathbb Z$, where the cyclic
group $\mathbb Z=\langle b\rangle$ acts on $\mathbb Z\oplus
\mathbb Z$ as the matrix $\begin{pmatrix}
0 & 1 \\
1 & 3
\end{pmatrix}.
$
For the group $H$,
$$
\cap_i\Phi_i^{\mathbb Z}(H)=H_2(H)=\mathbb Z/2.
$$
\end{Example}

\section{The Telescope.}
In this section we assume that $R=\mathbb Z$ and $\hat G=\hat G_R$
is the pronilpotent completion of $G.$ Moreover, we assume that
$A=G_{ab}$ and $M=[G,G].$  In \cite{Levine1} and \cite{Levine2},
J.P. Levine defines closely related groups, his algebraic closure
of $G$, whose image in the pronilpotent completion has important
properties. In the case of metabelian  group $G$ this image is
called the Telescope of $G$ and denoted by $\bar G.$ It was proved
in \cite{Baumslag-Mikhailov-Orr} that the metabelian decomposition
$M\mono G \epi A$ induces the following metabelian decomposition
$$0 \longrightarrow M^\ell \longrightarrow \bar G \longrightarrow A \longrightarrow 1,$$
where $M^\ell=M[(1+I)^{-1}]$ is the localization of $M$ with
respect to the multiplicative set $1+I.$

\begin{Proposition}
Let $G$ be a finitely generated metabelian group and $n\geq 1$.
Then the inclusion $\bar G\to \hat G$ induces an isomorphism
$$H_2(\bar G, \mathbb Z/n)  \cong H_2(\hat G,\mathbb Z/n).$$
\end{Proposition}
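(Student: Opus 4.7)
My plan is to compare the Lyndon--Hochschild--Serre spectral sequences for the two metabelian decompositions $M^\ell\mono\bar G\epi A$ and $\hat M\mono\hat G\epi A$ with $\mathbb Z/n$-coefficients, using the morphism of extensions induced by $\bar G\hookrightarrow\hat G$ (the identity on $A$, the natural inclusion $M^\ell\hookrightarrow\hat M$ on the kernels). By the usual comparison lemma for first-quadrant homological spectral sequences, it is enough to show that the induced morphism is an isomorphism on every $E^2_{p,q}$ with $p+q\le 2$, together with $E^2_{2,1}$ and $E^2_{3,0}$, which are the only entries that can feed the $p+q=2$ diagonal through $d_2$ or $d_3$.

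\textbf{Easy pieces.} Row $q=0$ is $H_p(A,\mathbb Z/n)$ on both sides and is tautologically identified. The universal coefficient theorem gives $H_1(N,\mathbb Z/n)=N\otimes\mathbb Z/n$ and $H_2(N,\mathbb Z/n)=(\wedge^2 N)\otimes\mathbb Z/n\oplus{\sf Tor}(N,\mathbb Z/n)$ for any abelian $N$, so the remaining $E^2$-entries split into one family of the form $H_p(A,-\otimes\mathbb Z/n)$ or $H_p(A,{\sf Tor}(-,\mathbb Z/n))$ applied to the chain $M\to M^\ell\to\hat M$, plus the wedge-summand of $E^2_{0,2}$. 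The first family is handled wholesale by Proposition~\ref{prop_homology_completion_rn} with $X=\mathbb Z/n$: the natural maps induce isomorphisms on $H_p(A,-\otimes\mathbb Z/n)$ and on $H_p(A,{\sf Tor}(-,\mathbb Z/n))$, which in particular covers all of row $q=1$ and the $\sf Tor$ summand of row $q=2$, as well as the trivial-$q$ positions $(2,1)$ and $(3,0)$.

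\textbf{The exterior square and main obstacle.} The genuinely new input is the comparison $((\wedge^2 M^\ell)\otimes\mathbb Z/n)_A\to((\wedge^2\hat M)\otimes\mathbb Z/n)_A$ in the wedge-summand of $E^2_{0,2}$. Since $\wedge^2$ commutes with $-\otimes K$ (a direct check from its presentation), with $K=\mathbb Z/n$ this becomes $(\wedge^2 M^\ell_K)_A\to(\wedge^2\hat M_K)_A$, and Proposition~\ref{prop_wedge_compl_loc} identifies both with $(\wedge^2(M_K/M_KI_K^m))_A$ for $m\gg 0$. That proposition, however, rests on the closure condition ${\sf cl}({\sf Ann}\, M_K+\sigma({\sf Ann}\, M_K))\supseteq I_K^m$, whose proof uses that $M$ is a tame $\mathbb Z[A]$-module in the sense of Bieri--Strebel, i.e.\ that $G$ is finitely presented (which is the hypothesis used in the paper's introduction for this result). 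This is the main obstacle: tameness is the real hypothesis powering the exterior-square comparison, and without it one cannot even guarantee that $K[A]/({\sf Ann}\,M_K+\sigma({\sf Ann}\,M_K))$ is Artinian. Once it is available, the pieces assemble via the standard spectral-sequence comparison to yield the desired isomorphism on $H_2(-,\mathbb Z/n)$.
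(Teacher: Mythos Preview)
Your approach is essentially the paper's: compare the two Lyndon--Hochschild--Serre spectral sequences, handle rows $q=0,1$ via Proposition~\ref{prop_homology_completion_rn}, and reduce $E^2_{0,2}$ to the exterior-square comparison of Proposition~\ref{prop_wedge_compl_loc}. One technical point: the universal-coefficient splitting you invoke is not natural, so you cannot literally decompose the comparison map as a direct sum of a $\wedge^2$-piece and a ${\sf Tor}$-piece; the paper instead applies $A$-coinvariants to the natural UCT short exact sequence to get a four-term exact sequence
\[
H_1(A,{\sf Tor}(-,\mathbb Z/n))\to (\wedge^2(-)\otimes\mathbb Z/n)_A\to H_2(-,\mathbb Z/n)_A\to {\sf Tor}(-,\mathbb Z/n)_A\to 0
\]
and uses the five lemma. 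Your diagnosis of the main obstacle is exactly right: Proposition~\ref{prop_wedge_compl_loc} needs $M$ tame, i.e.\ $G$ finitely presented, which is how the result is stated in the introduction; the ``finitely generated'' in the proposition's hypothesis is a slip in the paper.
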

\begin{proof}The morphism  of the metabelian decompositions
\begin{equation}
\begin{tikzpicture}
  \matrix (m)
 [matrix of math nodes,row sep=0.7cm,column sep=1cm,minimum width=2em]
  { 0 &  {M}^\ell & \bar G & A & 1 \\
  0 & \hat M & \ \hat G  & A & 1. \\};
  \path[-stealth]
(m-1-1.east|-m-1-2) edge (m-1-2)
(m-1-2.east|-m-1-3) edge  (m-1-3)
(m-1-3.east|-m-1-4) edge  (m-1-4)
(m-1-4.east|-m-1-5) edge  (m-1-5)
(m-2-1.east|-m-2-2) edge  (m-2-2)
(m-2-2.east|-m-2-3) edge  (m-2-3)
(m-2-3.east|-m-2-4) edge  (m-2-4)
(m-2-4.east|-m-2-5) edge  (m-2-5)
(m-1-2) edge  (m-2-2)
(m-1-3) edge  (m-2-3)
(m-1-4) edge node [right] {${\sf id}$} (m-2-4);
\end{tikzpicture}
\end{equation}
gives the morphism of the Lyndon-Hochschild-Serre spectral
sequences $E(\bar G)\to E(\hat G).$ First, we note that
$E^2_{i,0}(\bar G)=E^2_{i,0}(\hat G)=H_i(A,\mathbb Z/n).$ By
Proposition \ref{prop_homology_completion_rn} we have the
isomorphisms $H_i(A,M\otimes \mathbb Z/n)\cong H_i(A,M^\ell\otimes
\mathbb Z/n)\cong H_i(A,\hat M\otimes \mathbb Z/n).$ Therefore,
the morphism $E^2_{ij}(\bar G)\to E^2_{ij}(\hat G)$ is an
isomorphism for $j\in \{0,1\}.$  Then, it sufficient to prove that
the morphism $E^{2}_{0,2}(\bar G)=H_2(M^\ell,\mathbb  Z/n)_A\to
H_2(\hat M, \mathbb Z/n)_A =E^{2}_{0,2}(\hat G)$ is an
isomorphism.

By Proposition \ref{prop_homology_completion_rn}, we get
$H_i(A,{\sf Tor}(M^\ell,\mathbb Z/n))\cong H_i(A,{\sf Tor}(\hat
M,\mathbb Z/n))$ and by Proposition \ref{prop_wedge_compl_loc} we
get  $(\wedge^2 M^{\ell})_A\cong  (\wedge^2 \hat M)_A.$ Consider
the morphism of exact sequences
\begin{equation}
\begin{tikzpicture}
  \matrix (m)
 [matrix of math nodes,row sep=0.7cm,column sep=1cm,minimum width=2em]
  { H_1(A,{\sf Tor}(M^\ell,\mathbb Z/n)) &  (\wedge^2 M^\ell)_A & H_2(M^\ell,\mathbb Z/n) & {\sf Tor}(M^\ell,\mathbb Z/n)_A  \\
  H_1(A,{\sf Tor}(\hat M,\mathbb Z/n)) &  (\wedge^2 \hat M)_A & H_2(\hat M,\mathbb Z/n) & {\sf Tor}(\hat M,\mathbb Z/n)_A. \\};
  \path[->]
(m-1-1.east|-m-1-2) edge (m-1-2)
(m-1-2.east|-m-1-3) edge  (m-1-3)
(m-2-1.east|-m-2-2) edge  (m-2-2)
(m-2-2.east|-m-2-3) edge  (m-2-3)
(m-1-1) edge node [right] {$\cong$} (m-2-1)
(m-1-2) edge node [right] {$\cong$} (m-2-2)
(m-1-3) edge  (m-2-3)
(m-1-4) edge node [right] {$\cong$}  (m-2-4)
;
\path[->>]
(m-1-3.east|-m-1-4) edge  (m-1-4)
(m-2-3.east|-m-2-4) edge  (m-2-4)
;
\end{tikzpicture}
\end{equation}
Using the five lemma, we obtain that the morphism
  $H_2(M^\ell,\mathbb Z/n)\to H_2(\hat M,\mathbb Z/n)$ is an isomorphism.
\end{proof}

\vspace{.5cm}\noindent {\it Acknowledgements.} The authors thank
F. Petrov and K. Orr for discussions related to the subject of the
paper.

\end{document}